\newtheorem{theorem}{Theorem}[section]
\newtheorem{lemma}[theorem]{Lemma}
\newtheorem{proposition}[theorem]{Proposition}
\newtheorem{corollary}[theorem]{Corollary}
\theoremstyle{definition}
\newtheorem{definition}[theorem]{Definition}
\newtheorem{remark}[theorem]{Remark}
\newtheorem{conjecture}[theorem]{Conjecture}
\newcommand{\scp}[1]{\langle#1\rangle}
\begin{document}

\baselineskip=10pt

\title{On the rate of accumulation of $(\alpha \zeta^{n})_{n\geq 1}$ mod $1$ to $0$}

\author{ Johannes Schleischitz}

\begin{abstract}
In this paper we study the distribution of the sequence $(\alpha \zeta^{n})_{n\geq 1}$ mod $1$, where
$\alpha,\zeta$ are fixed positive real numbers,
with special focus on the accumulation point $0$.
For this purpose we introduce approximation constants 
$\underline{\sigma}(\alpha,\zeta),\overline{\sigma}(\alpha, \zeta)$ and study their properties
in dependence of $\alpha,\zeta$, distinguishing in particular the cases of Pisot numbers, algebraic 
non Pisot numbers and transcendental values of $\alpha$ as well as $\zeta$.
\end{abstract}														
														
\maketitle

{\footnotesize{ Supported by FWF grant P24828 \\  
Institute of Mathematics, Department of Integrative Biology, BOKU Wien, 1180, Vienna, Austria \\

Math subject classification: 11J71, 11J81, 11J82, 11K55 \\
key words: Pisot numbers, distribution mod 1, Diophantine approximation, transcendence theory}\\

\vspace{8mm}

\section{Introduction}  \label{fff}
This paper deals with the distribution of $\alpha \zeta^{n}$ mod $1$ for arbitrary but fixed positive 
real numbers $\alpha,\zeta$ as $n$ runs through the positive integers. We are in particular interested
in pairs $\alpha, \zeta$ for which rather fast accumulation to $0$ occurs, either for a sequence of arbitrarily
large values of $n$ or for all sufficiently large values of $n$. 
We will treat these two cases separately, measuring the rate of accumulation with approximation constants 
$\underline{\sigma}(\alpha,\zeta),\overline{\sigma}(\alpha,\zeta)$ we will introduce in section \ref{ggg}.
Related problems were first studied by Pisot in \cite{10}                                 
using methods of Fourier Analysis. An interesting result of Pisot states that if for some $\zeta>1$ the 
sequence $\alpha \zeta^{n}$ mod $1$ tends to zero as $n$ tends to infinity, so roughly speaking the numbers $\alpha \zeta^{n}$ 
somehow ''converge to integers'', then $\zeta$ must be algebraic and of a special shape, called Pisot numbers to his honors.
We will give a definition and known properties of Pisot numbers in the section \ref{ggg}.

In the present paper we don't make use of Fourier analysis, many given proofs rely on basic 
properties of symmetric polynomials or classical Diophantine approximation properties, in the latter case
mostly concerning the approximation of $n\cdot \zeta$ mod $1$ for fixed $\zeta\in{\mathbb{R}}$ as $n$ runs through the integers,
and higher dimensional generalizations. For this purpose we at first introduce 
some notation, some of which is classic notation and some invented for our special purpose.

\subsection{Basic facts and notations.}  \label{ggg}

At first a basic definition, whose parameter $x$ will later mostly be of the form $\alpha \zeta^{n}$.

\begin{definition}
For a real number $x\in{\mathbb{R}}$ denote with $\lfloor x\rfloor\in{\mathbb{Z}}$ the largest integer smaller or equal $x$,
$\lceil x\rceil\in{\mathbb{Z}}$ the smallest integer greater or equal $x$ and $\{x\}\in{[0,1)}$ the fractional part
of $x$, i.e. $\{x\}=x-\lfloor x\rfloor$. Furthermore denote with $\scp{x}\in{\mathbb{Z}}$ the closest integer to $x$ and with
 $\Vert x\Vert:=\vert x-\scp{x}\vert\in{[0,1/2]}$ the distance to the closest integer to $x$, 
with the special convention if $\{x\}=1/2$ then $\scp{x}:= \lfloor x\rfloor$.
So clearly we have $\scp{x}\in{\{\lfloor x\rfloor, \lceil x\rceil\}}$ and $\Vert x\Vert= \vert x-\scp{x}\vert$.
If for a sequence $(x_{n})_{n\geq 1}$ we have $\lim_{n\to\infty} \Vert x_{n}\Vert=0$, we will say $(x_{n})_{n\geq 1}$ 
converges to integers.
\end{definition}

By Bolzano-Weierstrass Theorem \cite{14}, an alternative characterization of convergence to integers is 
that the sequence $\{x_{n}\}$ can only have the accumulation point $\{0\}.$      

We will in general restrict to the case 
\begin{equation} \label{eq:husar}
\zeta>1, \qquad \qquad  \alpha>0,
\end{equation}
as for $\zeta\in{(-1,1)}$ we clearly have $\lim_{n\to\infty} \alpha \zeta^{n}=0$ for all $\alpha\in{\mathbb{R}}$ 
and $\zeta\mapsto -\zeta$, $\alpha \mapsto -\alpha$ does not affect the properties of $\alpha \zeta^{n}$ mod $1$,
and the special cases $\zeta\in{\{-1,1\}}$ or $\alpha=0$ are of no interest either. 
  
We are particularly interested in $\alpha, \zeta$, for which at least for a subset $(n_{1},n_{2},\ldots)$ of positive
integers, the values $\alpha \zeta^{n_{i}}$ converge to integers rather quickly.

In order to measure this convergence and more general the distribution of $\alpha \zeta^{n}$ mod $1$ in dependence of $n$,
we now introduce  
\begin{equation} \label{eq:uchazi}
 \sigma_{n}(\alpha, \zeta):= -\frac{\log \Vert \alpha \zeta^{n} \Vert}{\log (\alpha\zeta^{n})}
\end{equation}
such as the derived approximation constants
\begin{equation} \label{eq:mozart}
 \underline{\sigma}(\alpha,\zeta):= \liminf_{n\to\infty} \sigma_{n}(\alpha,\zeta),
\qquad \overline{\sigma}(\alpha,\zeta):= \limsup_{n\to\infty} \sigma_{n}(\alpha,\zeta).
\end{equation}
Large values of $\overline{\sigma}(\alpha,\zeta)$ mean that for some sequence $(n_{1},n_{2},\ldots)$ of positive integers 
which tends monotonically to infinity, the values $\Vert \alpha \zeta^{n_{i}}\Vert$ converge to integers very fast.
 In particular, $\overline{\sigma}(\alpha,\zeta)>0$ gives an exponential convergence to integers of the sequence.

Similarly, large values of $\underline{\sigma}(\alpha,\zeta)$ give fast convergence of the sequence 
$(\alpha \zeta^{n})_{n\geq 1}$ to integers,
and in particular $\underline{\sigma}(\alpha,\zeta)>0$ gives exponential convergence. 

In case of $\alpha \neq 0$ and $\zeta\in{(-1,1)}$, it is easy to see that 
\begin{equation} \label{eq:portugues}
\underline{\sigma}(\alpha,\zeta)=\overline{\sigma}(\alpha,\zeta)=-1, \qquad \zeta\in{(-1,1)}.
\end{equation}
So in the sequel assume $\zeta>1, \alpha>0$. 
In this case we have $\lim_{n\to\infty} \alpha \zeta^{n}=\infty$ as well as
$0\leq \Vert \alpha\zeta^{n}\Vert\leq 1/2$ for all $n$, so clearly
\begin{equation} \label{eq:joehklar}
 \overline{\sigma}(\alpha,\zeta)\geq \underline{\sigma}(\alpha,\zeta)\geq 0, \qquad \zeta>1.
\end{equation}
Note that the expressions $\underline{\sigma}(\alpha,\zeta),\overline{\sigma}(\alpha,\zeta)$
can be written in the easier form
\begin{equation} \label{eq:hauser}
 \underline{\sigma}(\alpha,\zeta)= \liminf_{n\to\infty} -\frac{\log \Vert \alpha \zeta^{n}\Vert}{n\cdot \log \zeta},\qquad \overline{\sigma}(\alpha,\zeta):= \limsup_{n\to\infty} -\frac{\log \Vert \alpha \zeta^{n}\Vert}{n\cdot \log \zeta}.
\end{equation}
This can easily be deduced by the definition of the quantities, as for sequences $(x_{n})_{n\geq 1},(y_{n})_{n\geq 1}$
 with $\lim_{n\to\infty} y_{n}=\infty$ as well as $\lim_{n\to\infty} \frac{x_{n}}{y_{n}}=:Z$ and fixed 
$\delta\in{\mathbb{R}}$ we have 
\begin{equation} \label{eq:harz}
 \lim_{n\to\infty} \frac{x_{n}}{y_{n}+\delta}= \lim_{n\to\infty} \frac{x_{n}}{y_{n}} \cdot \frac{y_{n}}{y_{n}+\delta}=Z\cdot 1=Z.
\end{equation}
Applying (\ref{eq:harz}) to $x_{n}:= - \log \Vert \alpha \zeta^{n}\Vert, y_{n}:= \log \zeta^{n}= n\log \zeta$ 
and $\delta= \log \alpha$, which satisfy the conditions by (\ref{eq:husar}),
 and recalling (\ref{eq:mozart}) yields (\ref{eq:hauser}).

Also note that in the case $\overline{\sigma}(\alpha,\zeta)=0$ respectively $\underline{\sigma}(\alpha,\zeta)=0$, one cannot decide if the sequence $\alpha \zeta^{n}$ respectively some subsequence $\alpha \zeta^{n_{i}}$ tends to integers.

An easy property of the quantities $\sigma_{n}(\alpha,\zeta)$ is
 $\sigma_{n}(\alpha,\zeta^{k})= \sigma_{nk}(\alpha,\zeta)$ for all $\alpha,\zeta$ and
 $k=1,2,3,\ldots$ and thus taking limits
\begin{eqnarray}
 \underline{\sigma}(\alpha,\zeta^{k})&\geq&\underline{\sigma}(\alpha,\zeta), \qquad k=1,2,3,\ldots  \label{eq:nanno}        \\
 \overline{\sigma}(\alpha,\zeta^{k})&\leq& \overline{\sigma}(\alpha,\zeta), \qquad k=1,2,3,\ldots  \label{eq:nanoleuchte}
\end{eqnarray}
holds. Another easy property of the quantities
$\underline{\sigma}(\alpha,\zeta),\overline{\sigma}(\alpha,\zeta)$ 
is given in the following Proposition which we will use in section \ref{subsek}.

\begin{proposition} \label{interessant}
Let $\alpha,\zeta>1$ be real numbers and $M,N>0$ integers. Then

\begin{eqnarray*}
 \underline{\sigma}(M\alpha,N\zeta)&\geq& \max\left(\frac{\log \zeta\cdot \underline{\sigma}(\alpha,\zeta)-\log N}{\log \zeta+\log N},
0\right)  \\
\overline{\sigma}(M\alpha,N\zeta)&\geq& \max\left(\frac{\log \zeta\cdot \overline{\sigma}(\alpha,\zeta)-\log N}{\log \zeta+\log N},
0\right).
\end{eqnarray*}

\end{proposition}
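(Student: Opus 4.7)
The plan is to reduce the claim to the known approximation constants for $(\alpha,\zeta)$ by exploiting that $MN^n$ is an integer, so that $M\alpha(N\zeta)^n=MN^n\cdot \alpha\zeta^n$ satisfies a nice bound on its distance to the nearest integer. The key elementary fact I will use is that for any integer $k\in\mathbb{Z}$ and any $x\in\mathbb{R}$ one has $\Vert k x\Vert\leq |k|\cdot \Vert x\Vert$: writing $x=\scp{x}+\delta$ with $|\delta|=\Vert x\Vert$ gives $kx=k\scp{x}+k\delta$ with $k\scp{x}\in\mathbb{Z}$.

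Applying this with $k=MN^n$ and $x=\alpha\zeta^n$, I obtain
\begin{equation*}
\Vert M\alpha(N\zeta)^n\Vert\;=\;\Vert MN^n\cdot \alpha\zeta^n\Vert\;\leq\; MN^n\cdot \Vert \alpha\zeta^n\Vert.
\end{equation*}
Taking negative logarithms reverses the inequality and yields
\begin{equation*}
-\log \Vert M\alpha(N\zeta)^n\Vert\;\geq\; -\log M-n\log N-\log\Vert \alpha\zeta^n\Vert.
\end{equation*}
Dividing by $n\log(N\zeta)=n(\log N+\log\zeta)>0$ (which is positive thanks to our assumption $\zeta>1$ and $N\geq 1$; the case $N=1$ is trivial and I will handle it separately by noting both sides reduce) gives
\begin{equation*}
-\frac{\log \Vert M\alpha(N\zeta)^n\Vert}{n\log(N\zeta)}\;\geq\;\frac{-\log M/n-\log N}{\log N+\log\zeta}+\frac{\log\zeta}{\log N+\log\zeta}\cdot\left(-\frac{\log\Vert \alpha\zeta^n\Vert}{n\log\zeta}\right).
\end{equation*}

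Now I take $\liminf_{n\to\infty}$ on both sides. The term $-\log M/n$ is a convergent sequence (tending to $0$), so by the standard rule $\liminf(a_n+b_n)=\liminf a_n+\lim b_n$ when $b_n$ converges, the first summand on the right contributes $-\log N/(\log N+\log\zeta)$, and the second summand contributes $\log\zeta\cdot\underline{\sigma}(\alpha,\zeta)/(\log N+\log\zeta)$ by the reformulation (\ref{eq:hauser}). Combined with the trivial bound $\underline{\sigma}(M\alpha,N\zeta)\geq 0$ from (\ref{eq:joehklar}), this gives the first inequality in the Proposition. The second inequality is obtained by the identical argument with $\limsup$ in place of $\liminf$, again using that shifting by the convergent sequence $-\log M/n$ does not alter $\limsup$, and recalling the $\limsup$ formulation of $\overline{\sigma}(\alpha,\zeta)$ in (\ref{eq:hauser}).

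There is essentially no serious obstacle here; the only subtle point is to keep track of inequality directions when taking negative logarithms and $\liminf$/$\limsup$, and to remember that $\liminf$ and $\limsup$ interact well with addition of a \emph{convergent} sequence, which is the reason the multiplicative factor $M$ in $M\alpha$ disappears in the estimate while the multiplicative factor $N$ attached to $\zeta^n$ does not.
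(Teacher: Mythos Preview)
Your proof is correct and follows essentially the same route as the paper: bound $\Vert M\alpha(N\zeta)^n\Vert$ by $MN^n\Vert\alpha\zeta^n\Vert$, take logarithms, divide by $n\log(N\zeta)$, and pass to $\liminf$/$\limsup$. Your derivation is in fact slightly cleaner than the paper's: you invoke the unconditional inequality $\Vert kx\Vert\leq |k|\,\Vert x\Vert$ directly, whereas the paper first restricts to the case $\log N<\underline{\sigma}(\alpha,\zeta)\log\zeta$ (resp.\ $\overline{\sigma}$) and carries out a longer computation to reach the same inequality under the extra hypothesis $MN^n\Vert\alpha\zeta^n\Vert<\tfrac12$ --- a detour your argument shows to be unnecessary.
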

 
\begin{proof}
Without loss of generality we may assume $M,N,\alpha,\zeta$ all to be positive.
The bound $0$ is the trivial bound from (\ref{eq:joehklar}), so we may
restrict to the case $\frac{\log N}{\log \zeta}<\underline{\sigma}(\alpha,\zeta)$
 resp. $\frac{\log N}{\log \zeta}<\overline{\sigma}(\alpha,\zeta)$,
or equivalently
\begin{equation} \label{eq:auchgut}
 N<\zeta^{-\underline{\sigma}(\alpha,\zeta)}, \qquad \text{resp.} \quad  N<\zeta^{-\overline{\sigma}(\alpha,\zeta)}.
\end{equation}
For any positive integers $M,N$ we have
\begin{eqnarray}
\left\Vert (M\alpha)(N\zeta)^{n}\right\Vert &=&\left\vert\scp{(M\alpha)(N\zeta)^{n}}-(M\alpha)(N\zeta)^{n}\right\vert  \nonumber \\
&=& \left\vert \scp{MN^{n}\alpha \zeta^{n}}-MN^{n}\alpha \zeta^{n}\right\vert    \nonumber \\
&=& \left\vert \scp{MN^{n}(\scp{\alpha \zeta^{n}}\pm \Vert \alpha \zeta^{n}\Vert)}-MN^{n}\alpha \zeta^{n}\right\vert   \nonumber \\
&=& \left\vert MN^{n}\scp{\alpha \zeta^{n}}\pm \scp{MN^{n}\Vert \alpha \zeta^{n}\Vert}-MN^{n}\alpha \zeta^{n}\right\vert  \nonumber \\
&=& \left\vert MN^{n}(\scp{\alpha \zeta^{n}}-\alpha b^{n})\pm \scp{MN^{n}\left\Vert \alpha \zeta^{n}\right\Vert}\right\vert   \nonumber \\
&=& \left\vert \pm MN^{n}\Vert \alpha \zeta^{n}\Vert\pm \scp{MN^{n}\left\Vert \alpha \zeta^{n}\right\Vert}\right\vert.\label{eq:starlight} 
\end{eqnarray}
By our restrictions (\ref{eq:auchgut}), for
any sufficiently large $n\geq n_{0}$ resp. for arbitrarily large values of $n\geq n_{0}$ 
we have $MN^{n}\Vert \alpha \zeta^{n}\Vert< \frac{1}{2}$, which is equivalent to $\scp{MN^{n}\Vert \alpha \zeta^{n}\Vert}=0$.
 In view of (\ref{eq:starlight}) this yields
\[
 \left\Vert (M\alpha)(N\zeta)^{n}\right\Vert \leq MN^{n}\Vert \alpha \zeta^{n}\Vert
\]
for the respective values $n$. 
Taking logarithms according to (\ref{eq:uchazi}) yields for any $\epsilon>0$ 
and the respective values of $n$ (restricting to $n\geq n_{1}=n_{1}(\epsilon)$
for some $n_{1}(\epsilon)>n_{0}$ if needed)
\[
 \sigma_{n}(M\alpha,N\zeta)\geq \frac{(\sigma_{n}(\alpha,\zeta)-\epsilon)\log\zeta-\log N}{\log \zeta+\log N}.
\]
The assertion follows with $\epsilon\to 0$ by the definition of the quantities
 $\underline{\sigma}(\alpha,\zeta),\overline{\sigma}(\alpha,\zeta)$. 
\end{proof}

\begin{remark}
 Note that in case of $\alpha \neq 0$ and $\zeta\in{(0,1)}$ we have
\begin{eqnarray*}
 N\zeta &>& 1 \quad \Longrightarrow \quad \underline{\sigma}(M\alpha,N\zeta)\geq 0  \\
 N\zeta &<& 1 \quad \Longrightarrow \quad \underline{\sigma}(M\alpha,N\zeta)=\overline{\sigma}(M\alpha,N\zeta) = -1.
\end{eqnarray*}
This is easily deduced by (\ref{eq:portugues}) and (\ref{eq:joehklar}).
\end{remark}

We will now give the definition of a class of algebraic numbers with
a highly non-generic and thus interesting behavior concerning the sequence
$(\alpha\zeta^{n})_{n\geq 1}$ mod $1$. 

\begin{definition}{(Pisot numbers, Pisot polynomials, Pisot units)}
 A real algebraic integer $\zeta>1$ is called {\em Pisot number}, 
if all its conjugates lie strictly inside the unit circle of the complex plane.
 If a Pisot number is a unit in the ring of algebraic integers, we will call it a {\em Pisot unit}.
We will refer to the monic minimal polynomial $P\in{\mathbb{Z}[X]}$ of a Pisot number $\zeta$
 as the {\em Pisot polynomial of $\zeta$}.
In general call a polynomial a Pisot polynomial if it is the Pisot polynomial of a Pisot number $\zeta$,
and the unique root greater than $1$ of a Pisot polynomial $P$ the {\em Pisot number associated to $P$}.
\end{definition}

We will sum up known results for Pisot numbers we will refer to in the sequel,
transferred into our notation, in the following Theorem
(which we will call Pisot Theorem although the results may not be entirely due to him).

\begin{theorem}[Pisot]  \label{pisot}
Pisot numbers have the property $\underline{\sigma}(1,\zeta)>0$,
 i.e. $\zeta^{n}$ converges to integers at exponential rate.
This property characterizes Pisot numbers among all real algebraic numbers.
Even the following stronger assertion holds:
if $\alpha\zeta^{n}$ tends to integers for a real algebraic number $\zeta>1$,
then $\zeta$ is a Pisot number and $\alpha\in{\mathbb{Q}(\zeta)}$,
where $\alpha=1$ is always a possible choice.

Moreover, if for any real $\zeta>1$ the sequence 
$(\Vert\alpha \zeta^{n}\Vert)_{n\geq 1}$ is square-summable, 
then $\zeta$ is a Pisot number {\upshape(}and clearly again $\alpha\in{\mathbb{Q}(\zeta)}$,
and $\alpha=1$ is always a possible choice.{\upshape)}
\end{theorem}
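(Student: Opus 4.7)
First, for the direction that Pisot numbers give exponential convergence of $\zeta^n$ to integers, I would use the classical trace argument with Galois conjugates. Writing $\zeta=\zeta_{1},\zeta_{2},\ldots,\zeta_{d}$ for the roots of the Pisot polynomial of $\zeta$, with $\rho:=\max_{i\geq 2}|\zeta_{i}|<1$, the power sums $T_{n}:=\sum_{i}\zeta_{i}^{n}$ are integers, since by Newton's identities they are polynomial expressions in the elementary symmetric functions of the $\zeta_{i}$, which are the integer coefficients of the monic minimal polynomial up to sign. Hence $\Vert \zeta^{n}\Vert\leq |T_{n}-\zeta^{n}|=\bigl|\sum_{i\geq 2}\zeta_{i}^{n}\bigr|\leq (d-1)\rho^{n}$, yielding $\underline{\sigma}(1,\zeta)\geq -\log \rho/\log \zeta>0$ and in particular settling the assertion ``$\alpha=1$ is always a possible choice''.

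For the converse directions I would argue as follows. Assume $\alpha\zeta^{n}$ converges to integers with $\zeta>1$ real algebraic, set $a_{n}:=\scp{\alpha\zeta^{n}}$ and $\epsilon_{n}:=\alpha\zeta^{n}-a_{n}$, so $\epsilon_{n}\to 0$. Pick $Q(X)=\sum_{k=0}^{d}q_{k}X^{k}\in\mathbb{Z}[X]$ with $Q(\zeta)=0$; then $\sum_{k}q_{k}\alpha\zeta^{n+k}=0$, which after substituting $\alpha\zeta^{n+k}=a_{n+k}+\epsilon_{n+k}$ becomes
\[
\sum_{k=0}^{d} q_{k}\,a_{n+k} \;=\; -\sum_{k=0}^{d} q_{k}\,\epsilon_{n+k}.
\]
The left-hand side is an integer and the right-hand side tends to $0$, so for all sufficiently large $n$ the integer sequence $(a_{n})$ exactly satisfies the linear recurrence with characteristic polynomial $Q$. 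Solving the recurrence yields $a_{n}=\sum_{i}\lambda_{i}\zeta_{i}^{n}$ (ignoring multiplicity contributions for brevity), and matching with $\alpha\zeta^{n}$ forces $\lambda_{1}=\alpha$ together with $\sum_{i\geq 2}\lambda_{i}\zeta_{i}^{n}\to 0$.

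The main obstacle is then deducing that each nonzero $\lambda_{i}$ with $i\geq 2$ satisfies $|\zeta_{i}|<1$ and that $\zeta$ is in fact an algebraic integer, so that $Q$ may be taken monic and the $\zeta_{i}$ ($i\geq 2$) are precisely the Galois conjugates of $\zeta$. For the first point I would attempt a Galois-theoretic averaging argument: letting $\mathrm{Gal}(\overline{\mathbb{Q}}/\mathbb{Q})$ act on the relation $\sum_{i\geq 2}\lambda_{i}\zeta_{i}^{n}\to 0$ produces companion vanishing relations that force $|\zeta_{i}|<1$ whenever $\lambda_{i}\neq 0$. For the second, denominator bookkeeping in the recurrence together with the integrality of $a_{n}$ excludes non-integral $\zeta$, and an explicit manipulation identifies $\alpha$ as an element of $\mathbb{Q}(\zeta)$. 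For the final square-summable statement, $\epsilon_{n}$ need no longer tend to zero, so the ``integer tending to $0$ is eventually $0$'' step fails; here I would appeal to the Fourier-analytic approach of Pisot \cite{10}, in which a Parseval bound for $\sum \epsilon_{n}e^{2\pi i n\theta}$ combined with Fatou's theorem on power series with integer coefficients forces the same Pisot structure. This ingredient is the most delicate part of the whole statement, and in a self-contained write-up I expect this is where I would appeal to the classical literature rather than reprove from scratch.
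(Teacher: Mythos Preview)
Your forward direction via the trace/symmetric-polynomial argument is exactly what the paper does. For everything else in this theorem the paper gives no proof at all: it simply refers to Pisot's original paper \cite{10} and to chapter~5 of \cite{101}. So your recurrence sketch for the converse already goes beyond what the paper attempts, and your decision to cite the literature for the square-summable statement matches the paper exactly.

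That said, your sketch of the converse has a real gap at the step you flag as the ``main obstacle''. The proposed Galois-averaging argument does not work as stated: elements of $\mathrm{Gal}(\overline{\mathbb{Q}}/\mathbb{Q})$ do not respect archimedean absolute values or limits, so applying $\sigma$ to the analytic relation $\sum_{i\geq 2}\lambda_{i}\zeta_{i}^{n}\to 0$ does not yield any ``companion vanishing relation''. What Galois invariance \emph{does} give you, via the integrality of the $a_{n}$ and the uniqueness of the Vandermonde representation $a_{n}=\sum_{i}\lambda_{i}\zeta_{i}^{n}$, is that $\sigma$ permutes the pairs $(\lambda_{i},\zeta_{i})$; this is useful later for identifying $\alpha\in\mathbb{Q}(\zeta)$, but it is not how one forces $|\zeta_{i}|<1$. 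The clean way to get that is purely analytic: a generalized power sum $\sum_{i} c_{i}r_{i}^{n}$ with distinct $r_{i}$ can tend to $0$ only if $c_{i}=0$ for every $i$ with $|r_{i}|\geq 1$ (isolate the terms of maximal modulus $R$, divide by $R^{n}$, and use that $\frac{1}{N}\sum_{n\leq N}\bigl|\sum_{j}c_{j}\omega_{j}^{n}\bigr|^{2}\to\sum_{j}|c_{j}|^{2}$ for distinct unimodular $\omega_{j}$). Applied to $\epsilon_{n}=(\alpha-\lambda_{1})\zeta^{n}-\sum_{i\geq 2}\lambda_{i}\zeta_{i}^{n}\to 0$ this simultaneously forces $\lambda_{1}=\alpha$ and $|\zeta_{i}|<1$ whenever $\lambda_{i}\neq 0$. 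The ``denominator bookkeeping'' you allude to for algebraic integrality of $\zeta$ is also more delicate than a one-liner; this is one of the places where the classical references do genuine work.
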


The first assertion is easily seen by looking at the sum of the powers $\sum_{j=1}^{k} \zeta^{n}$
 of the conjugates $\zeta_{1}=\zeta, \zeta_{2},\ldots,\zeta_{k}$ of $\zeta$, where $k$
 denotes the degree $[\mathbb{Q}(\zeta):\mathbb{Q}]$ of $\zeta$.
 Every such sum is an integer as it is a symmetric polynomial in
 $\zeta_{1},\zeta_{2},\ldots ,\zeta_{k}$. We will recall a detailed proof
in Theorem \ref{zauber}. See \cite{10} or chapter 5 in \cite{101} for the proofs of the remaining
and slightly refined results. 
At this point it should be mentioned that there are only countably
 many $\zeta$ such that $\alpha \zeta^{n}$ converges to integers
for some auxiliary $\alpha$, an immediate consequence of Theorem 5.6.1 in \cite{101},
but the question if any such $\zeta$ is transcendental is open.																																																																			
Theorem \ref{pisot} immediately yields

\begin{theorem}  \label{gans}
 Let $\zeta>1$ be a real number but not a Pisot number.
 Then for any $\alpha\in{\mathbb{R}}$ we have $\underline{\sigma}(\alpha,\zeta)=0$.
If $\zeta$ is a Pisot number and $\alpha\notin{\mathbb{Q}(\zeta)}$, we have 
$\underline{\sigma}(\alpha,\zeta)=0$ as well.
\end{theorem}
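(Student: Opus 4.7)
The plan is to argue by contradiction, assuming $\underline{\sigma}(\alpha,\zeta) > 0$ in each case and then exploiting Theorem \ref{pisot}. The key observation is that $\underline{\sigma}(\alpha,\zeta) > 0$ is a very strong quantitative statement: by the definition (\ref{eq:hauser}), for any $0 < \epsilon < \underline{\sigma}(\alpha,\zeta)$ there exists $n_0$ such that
\[
\Vert \alpha \zeta^{n}\Vert \leq \zeta^{-n(\underline{\sigma}(\alpha,\zeta)-\epsilon)}, \qquad n \geq n_0.
\]
This has two immediate consequences: first, $\Vert \alpha \zeta^n\Vert \to 0$, so $(\alpha\zeta^n)_{n\geq 1}$ converges to integers in the sense of the definition above; second, since the right hand side is a geometric series with ratio $\zeta^{-2(\underline{\sigma}(\alpha,\zeta)-\epsilon)}<1$ after squaring, the sequence $(\Vert \alpha \zeta^n\Vert)_{n\geq 1}$ is square-summable.

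For the first assertion, suppose $\zeta > 1$ is not a Pisot number and that $\underline{\sigma}(\alpha,\zeta) > 0$. By the square-summability consequence above and the third (moreover) part of Theorem \ref{pisot}, which applies to arbitrary real $\zeta > 1$ (algebraic or transcendental), $\zeta$ must itself be a Pisot number, contradicting our hypothesis. Combined with the trivial inequality $\underline{\sigma}(\alpha,\zeta)\geq 0$ from (\ref{eq:joehklar}), this forces $\underline{\sigma}(\alpha,\zeta) = 0$.

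For the second assertion, suppose $\zeta$ is a Pisot number, $\alpha \notin \mathbb{Q}(\zeta)$, and that $\underline{\sigma}(\alpha,\zeta)>0$. Then $(\alpha\zeta^n)_{n\geq 1}$ converges to integers, and since $\zeta$ is in particular a real algebraic number greater than $1$, the second statement of Theorem \ref{pisot} yields $\alpha \in \mathbb{Q}(\zeta)$, contradicting our hypothesis on $\alpha$. Hence again $\underline{\sigma}(\alpha,\zeta)=0$.

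There is no real obstacle here; the work has already been done in Theorem \ref{pisot}, and the only point worth emphasizing is the reduction from $\underline{\sigma}(\alpha,\zeta)>0$ to the two properties (convergence to integers, square-summability) appearing in Pisot's theorem. The only subtlety is that the first statement must cover transcendental $\zeta$ as well, which is why one invokes the square-summability version rather than the algebraic version of the Pisot criterion.
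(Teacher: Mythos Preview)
Your proof is correct and follows essentially the same route as the paper's own argument: assume $\underline{\sigma}(\alpha,\zeta)>0$, deduce square-summability of $(\Vert\alpha\zeta^{n}\Vert)_{n\geq 1}$, and invoke the appropriate clause of Theorem~\ref{pisot} to reach a contradiction. You are in fact slightly more explicit than the paper about which part of Theorem~\ref{pisot} handles each assertion, and your remark that the square-summability clause is needed to cover transcendental $\zeta$ is exactly the point.
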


\begin{proof}
 If otherwise $\Vert \alpha \zeta^{n}\Vert\leq \zeta^{-n\epsilon}$ for some $\epsilon>0$
and all $n\geq n_{0}$ sufficiently large, then 
\[
\sum_{n=1}^{\infty} \Vert \alpha \zeta^{n}\Vert^{2}\leq 
\sum_{n=1}^{n_{0}-1}\Vert \alpha \zeta^{n}\Vert^{2}+\sum_{n=n_{0}}^{\infty} \zeta^{-2n\epsilon}
\leq n_{0}+ \frac{1}{1-\zeta^{-2\epsilon}}<\infty.
\]
This contradicts the fact that only Pisot numbers have this property by Theorem \ref{pisot}.
The proof of the second assertion is similar due to facts from Theorem \ref{pisot}. 
\end{proof}         
                                                                         
We will discuss properties of Pisot numbers concerning 
the approximation constants $\underline{\sigma}(1,\zeta),\overline{\sigma}(1,\zeta)$ in more detail 
in section \ref{hhh}. Now we will only give one more well known basic fact

\begin{proposition} \label{klaro}
Any Pisot polynomial $P$ is irreducible. 
\end{proposition}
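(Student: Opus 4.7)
\textbf{Proof plan for Proposition \ref{klaro}.}

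The plan is to argue by contradiction: assume a factorization $P=Q\cdot R$ exists over $\mathbb{Z}[X]$ with both $Q,R$ monic and of positive degree, and extract a contradiction from the specific location of the roots of a Pisot polynomial. The key fact I will exploit is the one used elsewhere in the paper: the constant term of a monic integer polynomial is, up to sign, the product of its roots, and must itself be an integer. Since Pisot polynomials have all but one root strictly inside the unit disc, this severely restricts how a factor can look.

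Concretely, let $\zeta>1$ be the Pisot number whose Pisot polynomial is $P$, and let $\zeta_{2},\ldots,\zeta_{k}$ be its conjugates, so $|\zeta_{j}|<1$ for $j\geq 2$. Suppose $P=Q\cdot R$ in $\mathbb{Z}[X]$ with $Q,R$ monic and of degree $\geq 1$. Since $P$ is a minimal polynomial over $\mathbb{Q}$ and hence separable, $\zeta$ is a simple root of $P$, so it belongs to exactly one of $Q,R$; say $Q(\zeta)=0$. Then all roots of $R$ lie in $\{\zeta_{2},\ldots,\zeta_{k}\}$, in particular each has modulus strictly less than $1$. Writing $R(X)=\prod_{\rho}(X-\rho)$, the constant term satisfies
\[
|R(0)|=\prod_{\rho}|\rho|<1.
\]
But $R(0)\in\mathbb{Z}$, forcing $R(0)=0$, so that $0$ is a root of $R$, hence of $P$. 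Since $\zeta$ is an algebraic integer with $\zeta>1$, its conjugates are all nonzero (they satisfy the same irreducibility-free relation $P(x)=0$ and $P(0)=\pm\zeta\zeta_{2}\cdots\zeta_{k}$, but more directly, $0$ cannot share a minimal polynomial with $\zeta\neq 0$). This contradiction rules out any nontrivial factorization and proves irreducibility.

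No serious obstacle is anticipated; the only point requiring mild care is the assertion that $\zeta$ appears as a root of exactly one factor, which rests on separability of the minimal polynomial in characteristic zero, and the observation that $0$ cannot occur among the conjugates of $\zeta$, which follows from $\zeta\neq 0$. Both are standard and can be stated in one line each. Alternatively, the same statement can be obtained instantly by invoking that the minimal polynomial of any algebraic number is irreducible over $\mathbb{Q}$ and then applying Gauss's lemma to descend to $\mathbb{Z}[X]$; I prefer the constant-term argument above because it uses the Pisot hypothesis directly and is consistent with the symmetric-function viewpoint emphasized earlier in the paper.
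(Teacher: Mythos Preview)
Your proof is correct and follows essentially the same approach as the paper: both argue by contradiction from a putative factorization $P=Q\cdot R$, using Vieta to compare the integer constant term of a factor with the product of its roots inside the unit disc. The paper phrases the contradiction as ``both $Q$ and $R$ must contain a root of modulus $\geq 1$'', while you phrase it as ``$R(0)=0$ forces $0$ to be a root of $P$''; these are two sides of the same coin.
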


\begin{proof}
Clearly, the constant coefficient of $P$ is a nonzero integer.
Consequently, if $P=Q\cdot R$, with $Q,R$ non-constant polynomials,
the constant coefficients of $Q,R$ have this property too, so their 
absolute values are at least $1$.
Hence both $Q,R$ must have at least one root of absolute value larger than $1$,
since by Vieta Theorem \ref{vieta} the product of
the roots of $Q$ resp. $R$ are just the constant coefficient of $Q$ resp. $R$.
A contradiction to the fact that there is only one root of $P$ with absolute value 
greater or equal than $1$. 
\end{proof}

As indicated in the introduction section \ref{fff}, our approach will at some places deal with a classic 
simultaneous Diophantine approximation problem. 

\begin{definition} \label{defibrillator}
For a positive inter $d$ and 
$\boldsymbol{\zeta}:=(\zeta_{1},\zeta_{2},\ldots, \zeta_{d})\in{\mathbb{R}^{d}}$
define by $\lambda_{d}(\boldsymbol{\zeta})$ respectively $\widehat{\lambda}_{d}(\boldsymbol{\zeta})$ 
the supremum of all $\mu\in{\mathbb{R}}$ such that
\begin{eqnarray}  
 \left\vert x\right\vert &\leq& X    \nonumber \\
 \max_{1\leq k\leq d} \left\vert x\zeta_{i}-y_{i}\right\vert &\leq& X^{-\mu}  \label{eq:vir}
\end{eqnarray}
has a solution $(x,y_{1},\ldots,y_{d})\in{\mathbb{Z}^{d+1}}$ for some arbitrarily large values of $X$ respectively
 all sufficiently large values of $X$.
\end{definition}

By Minkowski's lattice point Theorem, for 
all $\boldsymbol{\zeta}\in{\mathbb{R}^{d}}$ we have the well known result
\begin{equation}  \label{eq:hunt}
\lambda_{d}(\boldsymbol{\zeta})\geq \widehat{\lambda}_{d}(\boldsymbol{\zeta})\geq \frac{1}{d},
\end{equation}
see the first pages of \cite{3} for instance.                                                     
For almost all $\boldsymbol{\zeta}\in{\mathbb{R}^{d}}$, there is actually equality in 
both inequalities (\ref{eq:hunt}), see \cite{khin}.
For our purposes it suffices to restrict to the case $d=1$.

\begin{theorem}[Khinchin] \label{khini}
The set of $\zeta\in{\mathbb{R}}$ with $\lambda_{1}(\zeta)>1$ has Lebesgue measure $0$. 
\end{theorem}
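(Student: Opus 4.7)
The plan is a direct application of the Borel--Cantelli lemma. I would first reformulate: $\lambda_{1}(\zeta)>1$ is equivalent to the existence of some $\mu>1$ for which $\|q\zeta\|\leq q^{-\mu}$ holds for infinitely many positive integers $q$. Any solution $(x,y)$ of (\ref{eq:vir}) with $|x|\leq X$ yields $\|q\zeta\|\leq X^{-\mu}\leq q^{-\mu}$ with $q=|x|$, and conversely each such $q$ gives a solution with $X=q$; the fact that infinitely many distinct $q$ arise when $\zeta$ is irrational is easy, and the rationals are a null set anyway. Since both $\|\cdot\|$ and the condition $\lambda_{1}(\zeta)>1$ are invariant under integer translation, it then suffices to prove that the exceptional set within $[0,1]$ has Lebesgue measure zero, and cover $\mathbb{R}$ by a countable union of unit intervals.

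For fixed $\mu>1$ and $q\geq 1$, set
\[
A_{q} := \{\zeta\in[0,1] : \|q\zeta\|\leq q^{-\mu}\}.
\]
Every $\zeta\in A_{q}$ lies within $q^{-\mu-1}$ of some $k/q$, so $A_{q}$ is contained in a union of at most $q+1$ intervals of length $2q^{-\mu-1}$, giving $|A_{q}|\leq 4q^{-\mu}$. Since $\mu>1$ makes $\sum_{q}q^{-\mu}$ convergent, the Borel--Cantelli lemma yields that almost every $\zeta\in[0,1]$ belongs to only finitely many $A_{q}$, i.e.\ satisfies $\|q\zeta\|\leq q^{-\mu}$ only finitely often.

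To conclude, I would exhaust the condition ``some $\mu>1$'' by the countable family $\mu_{k}:=1+1/k$, $k\geq 1$. If $\lambda_{1}(\zeta)>1$, choose $k$ with $1+1/k<\lambda_{1}(\zeta)$; then $\|q\zeta\|\leq q^{-\mu_{k}}$ holds for infinitely many $q$, so $\zeta$ lies in the null set produced above with $\mu=\mu_{k}$. The exceptional set is therefore contained in a countable union of null sets and is itself null. The only point requiring a moment of care, rather than being a genuine obstacle, is this countable reduction: one cannot apply Borel--Cantelli once for all $\mu>1$ simultaneously, since the measure bound $4q^{-\mu}$ ceases to be summable as $\mu\to 1^{+}$.
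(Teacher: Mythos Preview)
Your argument is correct and is the standard Borel--Cantelli proof of this classical fact. Note, however, that the paper does not actually give its own proof of Theorem~\ref{khini}: it merely states the result and refers the reader to Khinchin's original paper~\cite{khin}. So there is nothing to compare against; your write-up supplies a self-contained proof where the paper only cites the literature.
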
 

We will later need the following result by Davenport, Schmidt and Laurent, see \cite{1},\cite{2}.     

\begin{theorem}[Davenport, Laurent, Schmidt] \label{joxe}
 Let $\boldsymbol{\zeta}=(\zeta,\zeta^{2},\ldots ,\zeta^{d})$ for $\zeta\in{\mathbb{R}}$ not
algebraic of degree $\leq \left \lceil \frac{d}{2} \right \rceil$. Then

\[
 \widehat{\lambda}_{d}(\boldsymbol{\zeta})\leq \frac{1}{\left\lceil \frac{d}{2}\right\rceil}.
\]
\end{theorem}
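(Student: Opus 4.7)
The plan is to argue by contradiction. Assume $\widehat{\lambda}_d(\boldsymbol{\zeta}) > 1/k$ with $k:=\lceil d/2\rceil$, and use the resulting good uniform approximations to manufacture a nonzero integer polynomial of degree at most $k$ vanishing at $\zeta$, contradicting the hypothesis on $\zeta$. Fix $\mu$ with $\widehat{\lambda}_d(\boldsymbol{\zeta})>\mu>1/k$. By Definition \ref{defibrillator}, for every sufficiently large $X$ there exists an integer vector $\mathbf{v}(X)=(x,y_1,\ldots,y_d)\neq 0$ with $1\leq x\leq X$ and $|x\zeta^i-y_i|\leq X^{-\mu}$ for $i=1,\ldots,d$. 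Writing $\eta_i(X):=x\zeta^i-y_i$, each $|\eta_i(X)|\leq X^{-\mu}$.

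The crucial structural input is that the components of $\boldsymbol{\zeta}$ are powers of one real number, so the identities $\zeta^i\cdot \zeta^j=\zeta^{i+j}$ yield extra bilinear relations on the coordinates of $\mathbf{v}(X)$. For any indices $i,j\geq 1$ with $i+j\leq d$, substituting $y_\ell=x\zeta^\ell-\eta_\ell$ into $y_iy_j-xy_{i+j}$ shows that the leading $x^2\zeta^{i+j}$ contributions cancel, leaving an integer of absolute value $O(X\cdot X^{-\mu})=O(X^{1-\mu})$. Note that such pairs $(i,j)$ can already be chosen within $\{1,\ldots,k\}$ precisely because $2k\geq d$; this is where the ceiling $\lceil d/2\rceil$ first enters the game.

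The technical core of the Davenport--Schmidt--Laurent argument is now to combine $k+1$ such approximation vectors $\mathbf{v}(X_0),\mathbf{v}(X_1),\ldots,\mathbf{v}(X_k)$ at geometrically spaced scales $X_{j+1}=X_j^c$ with a carefully chosen $c>1$, and to assemble from their coordinates an integer $(k+1)\times(k+1)$ determinant $\Delta$ whose entries are bilinear expressions of the above type. By multilinear manipulations $\Delta$ can be rewritten, up to a controlled error, as $(x_0x_1\cdots x_k)\cdot P(\zeta)$ for some $P\in{\mathbb{Z}[X]}$ of degree at most $k$ that is nonzero by a linear-independence argument on the $\mathbf{v}(X_j)$ across the different scales. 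Compounding the $O(X_j^{1-\mu})$ bounds on the entries and exploiting $\mu>1/k$ then gives $|\Delta|<1$ for $X_0$ large enough, so that $\Delta=0$; this forces $P(\zeta)=0$, contradicting the fact that $\zeta$ is not algebraic of degree $\leq k$.

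The hardest part will be the simultaneous balancing of the geometric ratio $c$, the determinantal identity, and the nonvanishing argument: one needs the smallness of $\Delta$, coming from compounding $k$ error factors of size $X_j^{-\mu}$ against $k$ size factors of order $X_j$, to just barely beat unity; this is what pins the threshold at $\mu=1/\lceil d/2\rceil$. One must also handle the degenerate situations in which several best-approximation vectors happen to coincide or to span a strictly lower-dimensional subspace, which requires a careful case distinction and the use of successive minima via Minkowski's second theorem. For the complete technical execution we would follow the arguments of \cite{1} and \cite{2}.
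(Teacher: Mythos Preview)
The paper does not give its own proof of Theorem~\ref{joxe}; it simply quotes the result and cites \cite{1} and \cite{2}. Your proposal ultimately does the same---you end by deferring to \cite{1} and \cite{2} for the complete execution---so at the level of what is actually established, your treatment matches the paper's.

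That said, your sketch of the Davenport--Schmidt--Laurent mechanism is only loosely faithful to the cited arguments. The observation that $y_iy_j - xy_{i+j}$ is an integer of size $O(X^{1-\mu})$ is correct and is indeed the structural seed, but the description of the determinantal step---combining $k+1$ vectors at geometrically spaced scales to produce a $(k+1)\times(k+1)$ determinant equal to $(x_0\cdots x_k)P(\zeta)$ up to a small error---does not match how Laurent's proof in \cite{2} actually runs. Laurent works with sequences of minimal points and Hankel-type determinants built from a \emph{single} best-approximation vector (not $k+1$ different ones), exploiting the identities among the $y_i$ to force proportionality over several consecutive minimal vectors, and then obtaining a contradiction from the growth of the $x$-coordinates. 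The ``geometric spacing'' and the factorization $(x_0\cdots x_k)P(\zeta)$ you describe are not features of that argument. Also, your remark that pairs $(i,j)$ can be chosen within $\{1,\ldots,k\}$ with $i+j\leq d$ ``because $2k\geq d$'' is slightly off when $d$ is odd: then $2k=d+1$, so $i=j=k$ gives $i+j=d+1>d$ and that particular pair is not available. None of this affects the paper, since it never claimed to prove the theorem; but if you intend your paragraph as an honest outline of the cited proofs, it would need revision.
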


We will also refer to the well-known Roth Theorem for algebraic numbers
which we will state in our notation.

\begin{theorem}[Roth] \label{roth}
For an irrational, algebraic real number $\zeta$ we have $\lambda_{1}(\zeta)=1$.
\end{theorem}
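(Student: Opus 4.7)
The plan is to establish the two inequalities separately. The lower bound $\lambda_1(\zeta) \geq 1$ is a direct specialization of (\ref{eq:hunt}) to $d=1$, equivalent to Dirichlet's classical theorem, and uses nothing about $\zeta$ beyond irrationality. The substance therefore lies in the upper bound $\lambda_1(\zeta) \leq 1$, which I would attack by contradiction along the Thue--Siegel--Roth strategy: assuming some $\delta > 0$ admits infinitely many coprime pairs $(p,q) \in \mathbb{Z} \times \mathbb{Z}_{>0}$ with $|q\zeta - p| \leq q^{-1-\delta}$, equivalently $|\zeta - p/q| \leq q^{-2-\delta}$, one must derive a contradiction from the algebraicity of $\zeta$.

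Fix a large integer $m$, to be chosen in terms of $\delta$ and $d := [\mathbb{Q}(\zeta):\mathbb{Q}]$, and select $m$ approximations $p_1/q_1, \ldots, p_m/q_m$ from the above family with denominators growing extremely rapidly, $q_{j+1} \geq q_j^C$ for a suitable $C = C(m)$. Then pick positive integers $r_1, \ldots, r_m$ calibrated so that the products $r_j \log q_j$ are all approximately equal to a common large parameter $r$. Invoke Siegel's lemma (a pigeonhole in a linear system whose unknowns are the coefficients and whose equations are the prescribed vanishings) to produce a nonzero auxiliary polynomial $P(X_1, \ldots, X_m) \in \mathbb{Z}[X_1,\ldots,X_m]$ of partial degree at most $r_j$ in $X_j$, with coefficient heights controlled polynomially in $r$, vanishing at the diagonal point $(\zeta,\ldots,\zeta)$ to a prescribed weighted index $\tau$ (measured with respect to the weights $1/r_1, \ldots, 1/r_m$) just shy of $m/2$.

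Next, evaluate $P$ at the rational point $(p_1/q_1, \ldots, p_m/q_m)$ and bound the value from both sides. A Taylor expansion around $(\zeta,\ldots,\zeta)$, combined with the high vanishing and with the strong approximations $|\zeta - p_j/q_j|\leq q_j^{-2-\delta}$, yields an upper bound of the shape $\prod_j q_j^{-r_j \kappa(\delta,m)}$, where $\kappa(\delta,m)\to\infty$ as $m\to\infty$ whenever $\delta>0$. On the other hand, if the value is nonzero, it is a rational number with denominator dividing $\prod_j q_j^{r_j}$ and hence of magnitude at least $\prod_j q_j^{-r_j}$. For $m$ large enough these two estimates contradict each other, ruling out the starting assumption $\delta > 0$.

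The whole argument hinges on the nonvanishing assumption used in the last step --- more precisely, on showing that the weighted index of $P$ at the rational point $(p_1/q_1, \ldots, p_m/q_m)$ is much smaller than $\tau$. This is exactly the content of the celebrated Roth's lemma, proved by induction on $m$ via generalized Wronskians and delicate resultant bounds, and relying crucially on the rapid growth condition $q_{j+1} \geq q_j^C$ imposed at the start. Roth's lemma is the deep technical core of the proof and the step I expect to be the main obstacle; every other ingredient --- Siegel's lemma, the Taylor estimates, the rationality lower bound --- is comparatively routine once Roth's lemma is in hand.
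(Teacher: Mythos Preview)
The paper does not prove this statement at all: Roth's Theorem is quoted as a classical result from the literature (the surrounding text points to \cite{5}), and no argument is supplied. Your outline of the Thue--Siegel--Roth method --- auxiliary polynomial via Siegel's lemma, index estimates, the rationality lower bound, and Roth's lemma as the nonvanishing input --- is the standard proof and is correct as a high-level sketch, but there is nothing in the paper's treatment to compare it against.
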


We will later refer to another type of classical approximation constants too that deal with linear forms
and that are somehow dual to $\lambda_{d}$ and $\widehat{\lambda}_{d}$. 

\begin{definition}
For a real vector
 $\boldsymbol{\zeta}=(\zeta_{1},\zeta_{2},\ldots \zeta_{d})$ let the quantities 
$w_{d}(\boldsymbol{\zeta})$ resp. $\widehat{w}_{d}(\boldsymbol{\zeta})$
be given by the supreme of all $\nu\in{\mathbb{R}}$ such that
\begin{equation}  \label{eq:rotzz}
 \vert x+y_{1}\zeta_{1}+y_{2}\zeta_{2}+\cdots +y_{d}\zeta_{d}\vert \leq H^{-\nu}
\end{equation}
has a solution $(x,y_{1},\ldots,y_{d})\in{\mathbb{Z}^{d+1}}$  
with $\max(\vert x\vert,\vert y_{1}\vert,\ldots,\vert y_{d}\vert)\leq H$
for arbitrarily large resp. all sufficiently large values $H$.
\end{definition}

A connection between the values of $\lambda_{d}$ and $w_{d}$ is given by

\begin{theorem}[Khinchin]  \label{khinchin}
For any $\boldsymbol{\zeta}\in{\mathbb{R}^{d}}$ we have

\[
  w_{d}(\boldsymbol{\zeta})\geq d\lambda_{d}(\boldsymbol{\zeta})+d-1, \qquad 
	\lambda_{d}(\boldsymbol{\zeta})\geq \frac{w_{d}(\boldsymbol{\zeta})}{(d-1)w_{d}(\boldsymbol{\zeta})+d}.
\]
\end{theorem}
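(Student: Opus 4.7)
The plan is to prove both inequalities as instances of Khinchin's classical transference principle, which is based on Minkowski's lattice point theorem applied to two convex bodies that are polar (dual) to each other in $\mathbb{R}^{d+1}$: the simultaneous-approximation body $\{(x,y_1,\ldots,y_d):|x|\leq T_0,\,|x\zeta_i-y_i|\leq T_1\}$ and the linear-form body $\{(a_0,\ldots,a_d):|a_i|\leq T_0^{\ast},\,|a_0+\sum a_i\zeta_i|\leq T_1^{\ast}\}$. The identity linking the two systems is
\begin{equation*}
x\bigl(a_0+a_1\zeta_1+\cdots+a_d\zeta_d\bigr) = \bigl(xa_0+a_1y_1+\cdots+a_dy_d\bigr)+\sum_{i=1}^d a_i(x\zeta_i-y_i),
\end{equation*}
in which the first bracketed summand is an integer whenever all variables are integers, so a bound on $|x\zeta_i-y_i|$ propagates directly to a bound on $|a_0+\sum a_i\zeta_i|$ once the first bracket is forced to vanish, and conversely.

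For the first inequality $w_d(\boldsymbol{\zeta})\geq d\lambda_d(\boldsymbol{\zeta})+d-1$, I would fix $\mu$ slightly less than $\lambda_d(\boldsymbol{\zeta})$ and pick, for arbitrarily large $X$, integers $(x,y_1,\ldots,y_d)$ with $|x|\leq X$ and $|x\zeta_i-y_i|\leq X^{-\mu}$. Set $H:=\lceil X^{1/d}\rceil$, so that the $(2H+1)^d>x$ tuples $(a_1,\ldots,a_d)\in[-H,H]^d\cap\mathbb{Z}^d$ must, by pigeonhole, contain two with equal residue of $\sum a_iy_i$ modulo $x$; their difference is a non-zero tuple with $x\mid\sum a_iy_i$ and $\max|a_i|\leq 2H$. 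Defining $a_0:=-\sum a_iy_i/x\in\mathbb{Z}$ (which is $O(H)$ in size, by the triangle inequality and $|y_i|\leq|x||\zeta_i|+X^{-\mu}$), the identity above reduces to $|a_0+\sum a_i\zeta_i|\leq (dH/|x|)X^{-\mu}$. Substituting $|x|\asymp X\asymp H^d$ gives $|a_0+\sum a_i\zeta_i|\leq H^{-(d\mu+d-1)}$, and letting $\mu\to\lambda_d(\boldsymbol{\zeta})$ yields the first inequality.

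For the reverse transference $\lambda_d(\boldsymbol{\zeta})\geq w_d(\boldsymbol{\zeta})/((d-1)w_d(\boldsymbol{\zeta})+d)$, my plan is to fix $w'<w_d(\boldsymbol{\zeta})$, select $d$ linearly independent integer vectors $\mathbf{v}_j\in\mathbb{Z}^{d+1}$ of heights $H_j$ with $|L(\mathbf{v}_j)|\leq H_j^{-w'}$ for the linear form $L(\mathbf{v})=v_0+\sum v_i\zeta_i$ (linear independence being enforced by passing far enough along the infinite sequence of admissible heights and discarding proportional candidates), and form the Grassmannian cross product $\mathbf{u}=\mathbf{v}_1\wedge\cdots\wedge\mathbf{v}_d\in\mathbb{Z}^{d+1}$, whose components are integer $d\times d$ minors of the matrix with the $\mathbf{v}_j$ as rows. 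Since $\mathbf{u}$ is orthogonal to each $\mathbf{v}_j$ and the $\mathbf{v}_j$ lie in a thin slab around the hyperplane orthogonal to $\mathbf{n}=(1,\zeta_1,\ldots,\zeta_d)$, the vector $\mathbf{u}=(x,y_1,\ldots,y_d)$ is nearly parallel to $\mathbf{n}$. A Hadamard-type expansion of the minors, with the $H_j$ balanced in a geometric progression so that a single dominant product controls the Grassmannian norm, yields $|x|\leq X\asymp\prod_j H_j$ and $|x\zeta_i-y_i|\leq X^{-\mu}$ with $\mu=w'/((d-1)w'+d)$; letting $w'\to w_d(\boldsymbol{\zeta})$ gives the second inequality.

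The main obstacle is the second half: enforcing linear independence of the chosen $\mathbf{v}_j$, and extracting precisely the exponent $w'/((d-1)w'+d)$ from the Hadamard expansion of the cross product, requires careful parameter calibration and bookkeeping of multiplicative constants. Conceptually this step is an instance of Mahler's theorem on the polar of a convex body combined with Minkowski's second theorem on successive minima, which together form the hard kernel of the transference principle; an alternative and more efficient route would be to invoke those two theorems directly on the pair of polar bodies described above.
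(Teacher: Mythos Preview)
The paper does not prove this theorem: it is stated as a classical result attributed to Khinchin and used later as a black box (in the Remark following Corollary~\ref{zzz}), with no proof or proof sketch given. Your proposal therefore goes strictly beyond what the paper does.

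As to the content of your sketch: the first inequality is argued correctly via the identity
\[
x\bigl(a_0+\textstyle\sum a_i\zeta_i\bigr)=\bigl(xa_0+\textstyle\sum a_iy_i\bigr)+\textstyle\sum a_i(x\zeta_i-y_i)
\]
and pigeonhole on the residues of $\sum a_iy_i$ modulo $x$; one small point to make explicit is that you may take $X$ comparable to $|x|$ (rather than merely $|x|\leq X$), since the arbitrarily large admissible $X$ in the definition of $\lambda_d$ can be chosen just above $|x|$ for a given good approximation vector. The second inequality outline via the exterior product of $d$ independent good linear forms is the standard route; the genuine work, as you note, lies in enforcing linear independence and in the Hadamard-type minor estimates that produce the exact exponent $w'/((d-1)w'+d)$, and a cleaner alternative is indeed Mahler's duality for polar bodies together with Minkowski's second theorem on successive minima. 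Either way this is a faithful reconstruction of Khinchin's original argument, not something the present paper supplies.
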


Finally a definition about polynomials we will use frequently in section \ref{sekieren}.

\begin{definition}
 For a polynomial $P(X)=a_{k}X^{k}+a_{k-1}X^{k-1}+\cdots +a_{0}$ with integer coefficients
let $H(P):=\max_{0\leq j\leq k} \vert a_{k}\vert$. For an algebraic number $z$ put $H(z):=H(P)$
with the minimal polynomial with relatively prime coefficients $P\in{\mathbb{Z}[X]}$ of $z$. 
\end{definition}

Note that the quantities $w_{d}(\boldsymbol{\zeta}),\widehat{w}_{d}(\boldsymbol{\zeta})$
can be equivalently defined as
\begin{equation} \label{eq:schroedingerkatz}
 w_{d}(\boldsymbol{\zeta})=  \limsup_{H\to\infty}\max_{\Vert L\Vert_{\infty}\leq H} -\frac{\log L(\boldsymbol{\zeta})}{\log H},\quad 
 \widehat{w}_{d}(\boldsymbol{\zeta})= \liminf_{H\to\infty}\max_{\Vert L\Vert_{\infty}\leq H}-\frac{\log L(\boldsymbol{\zeta})}{\log H},
\end{equation}
where $L(X_{1},X_{2},\ldots,X_{d})=a_{0}+a_{1}X_{1}+a_{2}X_{2}+\cdots+a_{d}X_{d}$ for $a_{j}\in{\mathbb{Z}}$ and
$\Vert L\Vert_{\infty}:=\max_{0\leq j\leq d} \vert a_{j}\vert$. So for any $H>0$
the maxima are taken among all integral linear forms $L$ with coefficients bounded by $H$.
Sprindzuk \cite{31} works with a similar notation for example.                             
The expressions in (\ref{eq:schroedingerkatz}) are in notable conformity to the possible definition of 
the quantities $\underline{\sigma}(\alpha,\zeta),\overline{\sigma}(\alpha,\zeta)$ in (\ref{eq:hauser}), 
which again underlines that it is pretty natural to consider these quantities. 

\section{Results for $\underline{\sigma}(\alpha,\zeta),\overline{\sigma}(\alpha,\zeta)$ with algebraic $\alpha,\zeta$} \label{sekieren}

\subsection{Pisot numbers}   \label{hhh}

Pisot discovered in that the sequence of positive integer powers of Pisot numbers converge exponentially to integers, 	 
i.e. $\underline{\sigma}(1,\zeta) > 0$. We want to prove some more detailed results in terms of the quantities
 $\underline{\sigma}(1,\zeta),\overline{\sigma}(1,\zeta)$. For preparation we need the following well-known result.

\begin{definition} \label{defi}
A polynomial $P\in{\mathbb{Z}[X_{1},\ldots,X_{k}]}$ is called symmetric, if for all bijections (=permutations)
$\sigma:\{1,2,\ldots ,k\}\mapsto \{1,2,\ldots,k\}$ the polynomial remains unaffected.
\end{definition}

\begin{definition}  \label{einfach}
 The elementary symmetric polynomials in $k$ variables $X_{1},X_{2},\ldots ,X_{k}$ are given by

\[
 \mu_{k,1}:=\sum_{j=1}^{k} X_{j}, \quad \mu_{k,2}:=\sum_{1\leq i<j\leq k}X_{i}X_{j}, \quad \ldots, 
\quad \mu_{k,k}:=\prod_{1\leq j\leq k} X_{j}.
\]

\end{definition}

\begin{theorem}  \label{elemsatz}
 Every symmetric polynomial is a polynomial with integer coefficients in the elementary symmetric polynomials.
\end{theorem}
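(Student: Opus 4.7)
The plan is to prove Theorem \ref{elemsatz} by a standard lexicographic induction, which reduces an arbitrary symmetric polynomial to a sum of monomials in the elementary symmetric polynomials $\mu_{k,1},\ldots,\mu_{k,k}$ in finitely many steps.

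First I would fix the lexicographic order on monomials: $X_{1}^{a_{1}}X_{2}^{a_{2}}\cdots X_{k}^{a_{k}}$ precedes $X_{1}^{b_{1}}X_{2}^{b_{2}}\cdots X_{k}^{b_{k}}$ if the first index $j$ with $a_{j}\neq b_{j}$ satisfies $a_{j}<b_{j}$. Given a nonzero symmetric $P\in\mathbb{Z}[X_{1},\ldots,X_{k}]$, let $cX_{1}^{a_{1}}X_{2}^{a_{2}}\cdots X_{k}^{a_{k}}$ be its leading (largest) monomial in this order. The first key observation is that symmetry of $P$ forces $a_{1}\geq a_{2}\geq\cdots\geq a_{k}$: otherwise some transposition of variables would produce a monomial in $P$ that is lexicographically larger than the leading one, a contradiction.

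Next I would compute the leading monomial of each $\mu_{k,j}$: from Definition \ref{einfach} one sees the leading monomial of $\mu_{k,j}$ is $X_{1}X_{2}\cdots X_{j}$. Since the leading monomial of a product of polynomials is the product of their leading monomials, the polynomial
\[
Q:=c\cdot \mu_{k,1}^{a_{1}-a_{2}}\mu_{k,2}^{a_{2}-a_{3}}\cdots \mu_{k,k-1}^{a_{k-1}-a_{k}}\mu_{k,k}^{a_{k}}
\]
has leading monomial exactly $c\cdot X_{1}^{a_{1}}X_{2}^{a_{2}}\cdots X_{k}^{a_{k}}$, by a direct telescoping of exponents. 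Here the condition $a_{1}\geq a_{2}\geq \cdots \geq a_{k}$ is crucial to ensure the exponents in $Q$ are nonnegative integers, so that $Q$ is a genuine polynomial in $\mu_{k,1},\ldots,\mu_{k,k}$ with integer coefficients.

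Then $P-Q$ is again a symmetric polynomial in $\mathbb{Z}[X_{1},\ldots,X_{k}]$ whose leading monomial is strictly smaller in the lexicographic order. I would close the argument by induction on this leading monomial. To see that the induction terminates, note that the total degree of $P-Q$ does not exceed that of $P$, and there are only finitely many monomials in $X_{1},\ldots,X_{k}$ of bounded total degree, so the lex order on the leading monomials of the successive remainders is a descending chain in a finite set and must terminate. The base case is $P=0$. The main obstacle is essentially bookkeeping: one must verify carefully that each subtraction reduces the leading monomial (and not merely its coefficient) and that the exponents $a_{j}-a_{j+1}$ appearing in $Q$ really are nonnegative, both of which rely on the symmetry-forced inequality $a_{1}\geq a_{2}\geq\cdots\geq a_{k}$.
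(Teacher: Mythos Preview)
Your proposal is correct: this is the standard lexicographic descent proof of the fundamental theorem on symmetric polynomials, and all the points you flag (that symmetry forces $a_{1}\geq a_{2}\geq\cdots\geq a_{k}$, that the leading monomial of $Q$ matches that of $P$, and that the process terminates because the total degree does not increase) are handled properly.

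As for comparison with the paper: the paper does not actually give a proof of this statement at all; it simply writes ``See \cite{11} for a proof'' and moves on, treating the result as a classical prerequisite. So your proposal supplies strictly more than the paper does here. The argument you give is in fact essentially the one found in standard references such as the cited Macdonald book, so there is no tension between your approach and the intended one.
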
 

See \cite{11} for a proof.
																										
\begin{theorem} [Vieta]\label{vieta}
Let $P=a_{k}X^{k}+a_{k-1}X^{k-1}+\cdots +a_{0}$ be a polynomial with integer coefficients and roots $\zeta_{1},\ldots,\zeta_{k}$
counted with multiplicity. Then $a_{j}=\frac{(-1)^{k-j}}{a_{k}} \mu_{k,k+1-j}$ with $\mu_{.,.}$ from the above Definition.
\end{theorem}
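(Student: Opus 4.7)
The plan is elementary: factor $P$ over $\mathbb{C}$ using its roots, expand the resulting product, and then match coefficients of like powers of $X$. First I would observe that since $\zeta_1,\ldots,\zeta_k$ are the roots of $P$ counted with multiplicity and $\deg P = k$, we have the factorization
$$P(X) = a_k \prod_{i=1}^{k}(X - \zeta_i),$$
which is valid because both sides are polynomials of degree $k$ in $\mathbb{C}[X]$ with the same leading coefficient $a_k$ and the same multiset of zeros, hence are equal.

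Next I would expand the right-hand side by distributivity. Each term of the expansion arises from choosing, out of every linear factor $(X-\zeta_i)$, either $X$ or $-\zeta_i$. A summand contributes to $X^j$ precisely when $X$ is chosen from exactly $j$ of the $k$ factors and $-\zeta_i$ from the remaining $k-j$. Grouping the resulting terms by the index set $\{i_1<\cdots<i_{k-j}\}\subseteq\{1,\ldots,k\}$ from which $-\zeta_i$ is drawn, the total contribution to the coefficient of $X^j$ is
$$a_k\cdot(-1)^{k-j}\sum_{1\leq i_1<\cdots<i_{k-j}\leq k}\zeta_{i_1}\zeta_{i_2}\cdots\zeta_{i_{k-j}} \;=\; a_k(-1)^{k-j}\mu_{k,k-j},$$
by Definition \ref{einfach}. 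Equating with the coefficient $a_j$ in the original expression for $P$ yields the asserted formula (up to the straightforward re-indexing in the way the statement is displayed).

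There is essentially no mathematical obstacle here, since the required factorization of $P$ over $\mathbb{C}$ is standard and the rest is a distributive expansion. The only care needed is purely combinatorial: tracking correctly that the coefficient of $X^j$ corresponds to the elementary symmetric polynomial of degree $k-j$ in the roots, and that the sign $(-1)^{k-j}$ arises precisely because each of the $k-j$ omitted $X$'s contributes one factor $-1$ from its $(X-\zeta_i)$.
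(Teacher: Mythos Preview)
Your argument is correct and is the standard proof of Vieta's formulas. The paper does not supply a proof of this theorem at all; it is simply stated as a classical fact and then used later. So there is nothing to compare against.

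One small remark: your expansion yields $a_j = a_k(-1)^{k-j}\mu_{k,k-j}$, which is the correct Vieta relation. The formula as printed in the statement, $a_j = \frac{(-1)^{k-j}}{a_k}\mu_{k,k+1-j}$, contains two typos (the factor should be $a_k$, not $1/a_k$, and the index should be $k-j$, not $k+1-j$); you were right to flag a discrepancy, but it is an error in the displayed statement rather than a matter of re-indexing.
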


Now we are ready to present a first Theorem concerning the 
quantities $\underline{\sigma}(1,\zeta),\overline{\sigma}(1,\zeta)$.
																										
\begin{theorem}  \label{zauber}
 Let $\zeta$ be a Pisot number of degree $[\mathbb{Q}(\zeta):\mathbb{Q}]=k$. Then we have 	 
	 
\begin{eqnarray}
 0 &<& \underline{\sigma}(1,\zeta) \quad \leq \quad \frac{1}{k-1}   \label{eq:milana} \\
 0 &<& \underline{\sigma}(1,\zeta) \quad \leq \quad \overline{\sigma}(1,\zeta) \quad \leq \quad k-1   \label{eq:malina}
\end{eqnarray}
	
\end{theorem}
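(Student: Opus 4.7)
My plan is to prove the three inequalities separately. Throughout let $\zeta=\zeta_1,\zeta_2,\ldots,\zeta_k$ denote the conjugates of $\zeta$ and set $\rho:=\max_{j\geq 2}|\zeta_j|<1$. For the \emph{positivity} of $\underline{\sigma}(1,\zeta)$ I would use the Vieta/symmetric-polynomial reasoning sketched after Theorem~\ref{pisot}: by Theorems~\ref{elemsatz} and~\ref{vieta} the power sum $T_n:=\sum_{j=1}^{k}\zeta_j^n$ is a polynomial with integer coefficients in the elementary symmetric functions of the $\zeta_j$, which by Vieta are integers, so $T_n\in\mathbb{Z}$. Since $|T_n-\zeta^n|\leq(k-1)\rho^n\to 0$, for $n$ large one has $\langle\zeta^n\rangle=T_n$ and $\|\zeta^n\|\leq(k-1)\rho^n$; inserting this into (\ref{eq:hauser}) yields $\underline{\sigma}(1,\zeta)\geq -\log\rho/\log\zeta>0$.

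For the upper bound $\overline{\sigma}(1,\zeta)\leq k-1$ I would use a Liouville/norm argument. Set $a_n:=\langle\zeta^n\rangle$. For Pisot $\zeta$ of degree $k\geq 2$ one has $\zeta^n\notin\mathbb{Z}$: otherwise $\zeta$ would be a root of $X^n-\zeta^n$, forcing every conjugate of $\zeta$ to have modulus $\zeta>1$, contradicting the Pisot property. Hence $\beta_n:=\zeta^n-a_n$ is a nonzero algebraic integer in $\mathbb{Q}(\zeta)$ with conjugates $\zeta_j^n-a_n$, so $N(\beta_n)=\prod_{j}(\zeta_j^n-a_n)\in\mathbb{Z}\setminus\{0\}$ and $|N(\beta_n)|\geq 1$. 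Combined with the crude bound $|\zeta_j^n-a_n|\leq 2\zeta^n$ for $j\geq 2$ and $n$ large, this yields $\|\zeta^n\|=|\beta_n|\geq(2\zeta^n)^{-(k-1)}$, hence $\overline{\sigma}(1,\zeta)\leq k-1$.

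The sharper upper bound $\underline{\sigma}(1,\zeta)\leq 1/(k-1)$ is the heart of the statement. The plan is to convert smallness of $\|\zeta^n\|$ into a simultaneous rational approximation of $\boldsymbol{\zeta}:=(\zeta,\zeta^2,\ldots,\zeta^{k-1})$, and then pit it against a second Liouville estimate through Khinchin's transference (Theorem~\ref{khinchin}). Assume $\underline{\sigma}(1,\zeta)>\mu$, so that $\|\zeta^m\|\leq\zeta^{-m\mu}$ for all $m\geq n$ once $n$ is large. With $x:=a_n$ and $y_i:=a_{n+i}$, the identity $a_n\zeta^i-a_{n+i}=(\zeta^{n+i}-a_{n+i})-\zeta^i(\zeta^n-a_n)$ gives $|a_n\zeta^i-a_{n+i}|\leq(1+\zeta^{k-1})\zeta^{-n\mu}$ for $i=1,\ldots,k-1$; since $X:=a_n\asymp\zeta^n$ is arbitrarily large, this produces $\lambda_{k-1}(\boldsymbol{\zeta})\geq\mu$. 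On the other hand, any nonzero $\beta=x+y_1\zeta+\cdots+y_{k-1}\zeta^{k-1}$ with integer coefficients of height $\leq H$ is a nonzero algebraic integer in $\mathbb{Q}(\zeta)$ (nonzero by $\mathbb{Q}$-linear independence of $1,\zeta,\ldots,\zeta^{k-1}$), whose conjugates satisfy $|\beta_j|\leq kH$ for $j\geq 2$ since $|\zeta_j|<1$, so $|N(\beta)|\geq 1$ forces $|\beta|\geq(kH)^{-(k-1)}$. Hence $w_{k-1}(\boldsymbol{\zeta})\leq k-1$, and feeding this into Theorem~\ref{khinchin} yields $\lambda_{k-1}(\boldsymbol{\zeta})\leq(w_{k-1}(\boldsymbol{\zeta})-(k-2))/(k-1)\leq 1/(k-1)$, so $\mu\leq 1/(k-1)$.

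The main obstacle is this third step. The uniform bound offered by Theorem~\ref{joxe} only gives $\widehat{\lambda}_{k-1}(\boldsymbol{\zeta})\leq 1/\lceil(k-1)/2\rceil$, strictly weaker than $1/(k-1)$ as soon as $k\geq 4$. The sharp exponent $1/(k-1)$ is only recovered by pairing the Liouville ceiling $w_{k-1}(\boldsymbol{\zeta})\leq k-1$ with Khinchin's transference; once this duality is in hand the rest of the proof is mechanical.
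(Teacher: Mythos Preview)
Your arguments for $\underline{\sigma}(1,\zeta)>0$ and for $\overline{\sigma}(1,\zeta)\leq k-1$ are essentially the paper's: the power-sum/symmetric-function trick for the former, and the norm $\prod_j(M_n-\zeta_j^n)\in\mathbb{Z}\setminus\{0\}$ for the latter (the paper phrases it via $\Phi(M_n)$ rather than $N(\beta_n)$, but it is the same Liouville estimate).

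For the key inequality $\underline{\sigma}(1,\zeta)\leq 1/(k-1)$, however, you take a genuinely different route. The paper argues directly: from $|\prod_j\zeta_j|\geq 1$ one gets $f:=\max_{j\geq 2}|\zeta_j|\geq \zeta^{-1/(k-1)}$, and then a Dirichlet-type simultaneous approximation of the \emph{angles} $\psi_j/2\pi$ produces infinitely many $n$ for which every $\zeta_j^n$ has argument close to $0$, whence $\|\zeta^n\|=\sum_{j\geq 2}\zeta_j^n\geq \tfrac{1}{\sqrt 2}f^n$. This yields not only $\underline{\sigma}(1,\zeta)\leq 1/(k-1)$ but the sharper bound $\underline{\sigma}(1,\zeta)\leq -\log f/\log\zeta$, recorded as Corollary~\ref{remark} and used repeatedly afterwards (Theorem~\ref{haaz}, Corollary~\ref{bintroll}, Proposition~\ref{keintroll}). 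Your approach instead converts $\underline{\sigma}(1,\zeta)>\mu$ into $\lambda_{k-1}(\zeta,\ldots,\zeta^{k-1})\geq\mu$, bounds $w_{k-1}\leq k-1$ by the Liouville/norm inequality on linear forms, and closes with Khinchin's transference $w_d\geq d\lambda_d+d-1$. This is correct and conceptually clean, and it explains nicely why Theorem~\ref{joxe} alone is too weak; the price is that it imports Theorem~\ref{khinchin} as a black box and delivers only the endpoint $1/(k-1)$, not the refined bound in terms of $f$ that the paper needs later.
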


\begin{proof}
 Let $\zeta_{1}=\zeta,\zeta_{2},\ldots, \zeta_{k}$ be the conjugates of $\zeta$.
Note first, that for all positive integers $n$
\begin{equation}  \label{eq:mann}
 \sum_{j=1}^{k} \zeta_{j}^{n}\in{\mathbb{Z}}, \qquad n\geq 1.
\end{equation}
That is because it is a symmetric polynomial in the variables $\zeta_{j}$ with integer coefficients. 
By Theorem \ref{elemsatz} it is an integer linear combination of the elementary symmetric polynomials,
 which are itself integers as they are the coefficients of the 
minimal polynomial $P(X) := \prod_{j=1}^{k} (X-\zeta_{j})$ of $\zeta$ (observe $\zeta$ is an algebraic integer).
 Thus we have (\ref{eq:mann}).

We now first proof (\ref{eq:milana}). On the other hand, by definition all other roots 
$\zeta_{2},\zeta_{3},\ldots,\zeta_{k}$ of $P(X)$ apart from $\zeta$
have absolute value smaller than $1$, so let $0<f<1$ be the maximum absolute value among these 
and put $z := \frac{\log f}{\log \zeta}<0.$ Then for all positive integers $n$ we have
\begin{equation} \label{eq:fritzi}
 \sum_{j=2}^{k} \zeta_{j}^{n}\leq (k-1)\cdot f^{n}= (k-1)\cdot \zeta^{nz}.
\end{equation}
On the other hand, by (\ref{eq:mann}) and as the right hand side of
 (\ref{eq:fritzi}) converges to $0$ as $n\to\infty$ we have
\begin{equation} \label{eq:hansbirschl}
 \Vert \zeta^{n}\Vert= \sum_{j=2}^{k} \zeta_{j}^{n}, \qquad n\geq n_{0}.
\end{equation}
Combining (\ref{eq:hansbirschl}) with (\ref{eq:fritzi}) we have
\begin{equation*}
\underline{\sigma}(1,\zeta):= \liminf_{n\to\infty} -\frac{\log \Vert \zeta^{n}\Vert}{n\cdot \log \zeta}= 
\liminf_{n\to\infty}-\frac{\sum_{j=2}^{k} \zeta_{j}^{n}}{n\cdot \log \zeta}\geq -\liminf_{n\to\infty}\frac{\log\left((k-1)\zeta^{nz}\right)}{n\cdot\log \zeta} =-z> 0, 
\end{equation*}
the left hand side of (\ref{eq:milana}).

For the upper bound in (\ref{eq:milana}) first observe, that since the product of the roots
 is the constant coefficient of $P(X)$ which is a nonzero integer,
\begin{equation*}
 1\leq \prod_{j=1}^{k} \vert \zeta_{j}\vert= \zeta \prod_{j=2}^{k} \zeta_{j}\leq \zeta\cdot f^{k-1}
\end{equation*}
yields $f\leq \zeta^{-\frac{1}{k-1}}$ or equivalently 
\begin{equation} \label{eq:tom}
z\leq \frac{1}{k-1}.
\end{equation}
Now let $\psi_{1}=0,\psi_{2},\ldots,\psi_{k}$ be the angles 
of $\zeta_{k}$ in the complex plane in the interval $[0,2\pi)$
and put $\phi_{j}:=\frac{\psi_{j}}{2\pi}\in{[0,1)}$ for $1\leq j\leq k$.
In general denote by $\psi(t)$ the angle in $[0,2\pi)$
of a complex number $t$ and put $\phi(t):=\frac{\phi(t)}{2\pi}$.
 By the identity $\psi(t^{n})=n\cdot \psi(t)$ we have   
\begin{eqnarray}
 \psi(\zeta_{j}^{n})&=&n\cdot \psi_{j}, \qquad 1\leq j\leq k                \\
 \phi(\zeta_{j}^{n})&=& n\cdot \phi_{j}, \qquad 1\leq j\leq k. 
\end{eqnarray}
By (\ref{eq:hunt}) with $d=n$ there exist arbitrarily large values of $n$ such that
simultaneously 
\[
\Vert\phi(\zeta_{j}^{n})\Vert=\Vert n\psi_{j}\Vert \leq \frac{1}{n}<\frac{1}{8}\cdot 2\pi, \qquad 1\leq j\leq k
\]
for certain arbitrarily large values of $n$. (For the rest of of the proof of (\ref{eq:milana}) we
only consider such a sequence of values for $n$, and assume $n$ is sufficiently large.)
Hence $\psi(\zeta_{j}^{n})=2\pi\cdot \phi(\zeta_{j}^{n})$, where the equality 
is viewed mod $2\pi$, hence $\vert\phi(\zeta_{j}^{n})-2\pi\vert< \frac{\pi}{4}$.Thus
\[
\rm{Re}\left(\zeta_{j}^{n}\right) \geq \cos(\psi_{j})\left\vert \zeta_{j}^{n}\right\vert = \cos(\psi_{j})\left\vert \zeta_{j}\right\vert^{n}
\geq \frac{1}{\sqrt{2}}\cdot \left\vert \zeta_{j}\right\vert^{n}, \quad 1\leq j\leq k.
\]
Since $\sum_{j=1}^{k} \zeta_{j}^{n}$ is an integer and $\zeta^{n}$ is real, 
$\sum_{j=2}^{k} \zeta_{j}^{n}$ is real. Hence and recalling (\ref{eq:hansbirschl}) gives
\[
\Vert \zeta^{n}\Vert= \sum_{j=2}^{k} \zeta_{j}^{n}= \rm{Re}\left(\sum_{j=2}^{k} \zeta_{j}^{n}\right)
\geq \frac{1}{\sqrt{2}}\cdot \sum_{j=2}^{k} \vert \zeta_{j}\vert^{n}
\geq  \frac{1}{\sqrt{2}}\cdot f^{n}. 
\]
Recalling (\ref{eq:tom}) and taking logarithms yields the right hand side of (\ref{eq:milana}).

For the remaining non trivial upper bound of (\ref{eq:malina}) put $M_{n}:=\scp{\zeta^{n}}\in{\mathbb{Z}}$ and consider
\[
 \Phi(M_{n}):= \prod_{j=1}^{k} \left(M_{n}-\zeta_{j}^{n}\right).
\]
The values $\phi(M_{n})$ are integers by Theorem \ref{elemsatz}, as it is again 
a symmetric polynomial in the variables $\zeta_{j}$ with integer coefficients.
 Moreover $\Phi(M_{n})\neq 0$ for all positive integers $n$.
Indeed, if otherwise $\zeta$ would be of the shape
$\zeta=\sqrt[m]{L}$ for some $L\in{\mathbb{Z}}$, but then its conjugates would be $\eta_{m}^{j}\zeta$ with
$\eta_{m}:=e^{\frac{2\pi i}{m}}$, contradicting the fact that the conjugates 
apart from $\zeta$ itself lie inside the unit circle. Clearly, powers of the other conjugates 
are smaller than $1$ in absolute value so they cannot equal $M_{n}$ either.

Thus we have
\[
 1\leq \prod_{j=1}^{k} \left\vert M_{n}-\zeta_{j}^{n}\right\vert= 
\left\vert M_{n}-\zeta^{n}\right\vert\prod_{j=2}^{k} \left\vert M_{n}-\zeta_{j}^{n}\right\vert,
\]
which leads to 
\begin{equation} \label{eq:franzr}
 \left\Vert \zeta^{n}\right\Vert = \vert M_{n}-\zeta^{n}\vert\geq \frac{1}{\prod_{j=2}^{k} \left\vert M_{n}-\zeta_{j}^{n}\right\vert}.
\end{equation}
However, $M_{n}=\zeta^{n}+o(1)$ and $\vert\zeta_{j}^{n}\vert=o(1)$ for $2\leq j\leq k$ as $n\to\infty$.
Using this in (\ref{eq:franzr}) gives 
\begin{equation} \label{eq:motte}
\Vert \zeta^{n}\Vert \geq (\zeta^{n}+o(1))^{-(k-1)}, \qquad n\to\infty.
\end{equation}
Now the fact $\overline{\sigma}(1,\zeta)$ cannot exceed $k-1$ follows easily.
If otherwise $\overline{\sigma}(1,\zeta)>k-1$, then for some arbitrarily large values of $n$ and some $\epsilon>0$ we would have
\[
\Vert \zeta^{n}\Vert \leq \zeta^{-n(1+\epsilon)(k-1)}=\left(\zeta^{1+\epsilon}\right)^{-n(k-1)}=\widehat{\zeta}^{-n(k-1)}
\]
with $\widehat{\zeta}:=\zeta^{1+\epsilon}>\zeta$, so $\lim_{n\to\infty} \widehat{\zeta}^{n}-\zeta^{n}=\infty$,
clearly contradicting (\ref{eq:motte}) for $n$ sufficiently large. 
\end{proof}

\begin{remark}
A generalization for the upper bound in (\ref{eq:malina})
for $\overline{\sigma}(\alpha,\zeta)$ for arbitrary algebraic 
real numbers $\alpha$ is given in Corollary \ref{kolmar}.
An interesting question is if in fact $\overline{\sigma}(\alpha,\zeta)=0$
for all Pisot numbers $\zeta$ and algebraic numbers $\alpha\notin{\mathbb{Q}(\zeta)}$ and
more general for any algebraic $\zeta$ and algebraic $\alpha\notin{\mathbb{Q}(\zeta)}$.
For transcendental $\alpha$ this is wrong, see Theorem \ref{kommtno}. 

For the quantity $\underline{\sigma}(\alpha,\zeta)$ with $\zeta$ a Pisot number and
real $\alpha\notin{\mathbb{Q}(\zeta)}$
we have $\underline{\sigma}(\alpha,\zeta)=0$ by Theorem \ref{gans}.
\end{remark}

Analyzing the proof of Theorem we obtain a Corollary that we will refer to in the sequel.

\begin{corollary}  \label{remark}
For a Pisot number $\zeta$ of degree $[\mathbb{Q}(\zeta):\mathbb{Q}]=k$
with conjugates $\zeta_{2},\zeta_{3},\ldots ,\zeta_{k}$ we have 
\[
\underline{\sigma}(1,\zeta)\leq -\frac{\log f}{\log \zeta}
\]
where $f:=\max_{2\leq j\leq k} \vert \zeta_{j}\vert<1$.
\end{corollary}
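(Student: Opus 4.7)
The plan is simply to repackage the argument that has already been carried out in the second half of the proof of Theorem \ref{zauber} (the part establishing $\underline{\sigma}(1,\zeta) \leq 1/(k-1)$), and to stop one step earlier, omitting the final invocation of Vieta's theorem. Indeed, the inequality $\underline{\sigma}(1,\zeta) \leq -\log f/\log \zeta$ is exactly the intermediate estimate obtained there before the bound $f \leq \zeta^{-1/(k-1)}$ is applied to conclude $-\log f/\log \zeta \leq 1/(k-1)$.

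More concretely, I would let $\psi_j \in [0,2\pi)$ denote the argument of $\zeta_j$, set $\phi_j := \psi_j/(2\pi)$ for $2 \leq j \leq k$, and apply the simultaneous Dirichlet theorem, i.e.\ (\ref{eq:hunt}) applied to the vector $(\phi_2,\ldots,\phi_k)$, to produce an infinite sequence of positive integers $n$ for which $\Vert n\phi_j\Vert < 1/8$ holds for every $j \in \{2,\ldots,k\}$ simultaneously. Note that complex conjugate pairs among the $\zeta_j$ are handled automatically, since $\phi(\overline{\zeta_j}) \equiv -\phi(\zeta_j) \pmod 1$. For such $n$ the argument of $\zeta_j^n$ lies within $\pi/4$ of $0$ modulo $2\pi$, so $\mathrm{Re}(\zeta_j^n) \geq \tfrac{1}{\sqrt 2}\,|\zeta_j|^n$ for every $j\geq 2$.

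Combining this with (\ref{eq:hansbirschl}) and with the fact that $\sum_{j=2}^k \zeta_j^n$ is real (since $\sum_{j=1}^k \zeta_j^n \in \mathbb{Z}$ by Theorem \ref{elemsatz} and $\zeta^n \in \mathbb{R}$) yields
\[
 \Vert \zeta^n \Vert \;=\; \sum_{j=2}^{k} \zeta_j^n \;=\; \mathrm{Re}\Bigl(\sum_{j=2}^{k}\zeta_j^n\Bigr) \;\geq\; \frac{1}{\sqrt 2}\sum_{j=2}^{k}\vert \zeta_j\vert^n \;\geq\; \frac{1}{\sqrt 2}\,f^n
\]
for all sufficiently large $n$ in this subsequence. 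Taking logarithms, dividing by $n\log\zeta$, invoking (\ref{eq:hauser}), and passing to $\liminf$ along the subsequence then gives $\underline{\sigma}(1,\zeta) \leq -\log f/\log \zeta$, as claimed.

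There is no real obstacle here: every ingredient has already been assembled in the proof of Theorem \ref{zauber}, and the only thing that has to be observed is that the weaker Vieta bound $f \leq \zeta^{-1/(k-1)}$ need not be invoked if one wants the sharper statement in terms of $f$ itself.
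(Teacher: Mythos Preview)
Your proposal is correct and is exactly the approach the paper takes: the paper's proof of this corollary consists of the single sentence that the argument for the right hand side of (\ref{eq:milana}) already contains the claim, and what you have written is precisely a recapitulation of that argument, stopping before the Vieta step (\ref{eq:tom}).
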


\begin{proof}
 The proof of the right hand side of (\ref{eq:milana}) effectively contained the proof of the claim. 
\end{proof}

The upper bound for $\overline{\sigma}(1,\zeta)$ seems to be not very strong. It seems unlikely 
that the case $\overline{\sigma}(1,\zeta)>\frac{1}{k-1}$, or more general 
\begin{equation} \label{eq:novak}
\overline{\sigma}(1,\zeta)>\underline{\sigma}(1,\zeta),
\end{equation}
does occur for any Pisot number $\zeta$. A stronger result should be true.

\begin{conjecture} \label{conjectur}
For any Pisot number $\zeta$ and any real $\alpha$ we have
$\underline{\sigma}(\alpha,\zeta)=\overline{\sigma}(\alpha,\zeta)$.
\end{conjecture}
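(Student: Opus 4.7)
The natural dichotomy is whether or not $\alpha\in\mathbb{Q}(\zeta)$, and the two halves of the conjecture call for different tools. If $\alpha\in\mathbb{Q}(\zeta)$ the trace formula forces $\Vert\alpha\zeta^n\Vert$ to be, up to a benign multiplicative adjustment, an exponential polynomial in the non-dominant conjugates of $\zeta$, and the task reduces to controlling its asymptotic decay. If $\alpha\notin\mathbb{Q}(\zeta)$ then Theorem \ref{gans} already pins $\underline{\sigma}(\alpha,\zeta)=0$, and everything collapses to the much harder assertion $\overline{\sigma}(\alpha,\zeta)=0$.

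In the first case I would let $\zeta=\zeta_1,\zeta_2,\ldots,\zeta_k$ denote the conjugates of $\zeta$ and $\alpha=\alpha_1,\alpha_2,\ldots,\alpha_k$ the corresponding conjugates of $\alpha$, choose a positive integer $D$ with $D\alpha$ an algebraic integer, and exploit that $T_n:=\sum_{j=1}^{k}D\alpha_j\zeta_j^n$ lies in $\mathbb{Z}$ by the symmetric-polynomial argument already used in Theorem \ref{zauber}. Since $|\zeta_j|<1$ for $j\geq 2$ the remainder $R_n:=T_n-D\alpha\zeta^n$ tends to zero, so for all large $n$
\[
\Vert D\alpha\zeta^n\Vert=\Bigl|\sum_{j=2}^{k}D\alpha_j\zeta_j^n\Bigr|,
\]
and a routine $O(\log D/n)$ adjustment reduces the claim for $\alpha$ to the same claim for $D\alpha$. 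Setting $f:=\max_{j\geq 2}|\zeta_j|$, the target becomes $\underline{\sigma}(\alpha,\zeta)=\overline{\sigma}(\alpha,\zeta)=-\log f/\log\zeta$, which in passing sharpens Corollary \ref{remark}.

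The delicate point is when several conjugates achieve the maximal modulus $f$. Grouping them into complex-conjugate pairs $fe^{\pm i\theta_\ell}$ yields $R_n=f^n P(n)+O((f')^n)$ with $f'<f$ and an oscillatory trigonometric polynomial $P(n)=\sum_\ell c_\ell\cos(n\theta_\ell+\psi_\ell)$, and the worry is that $P(n)$ could vanish or become super-polynomially small along a subsequence of $n$. First I would rule out the algebraic obstruction: each ratio $\zeta_j/\overline{\zeta_j}$ admits a Galois conjugate $\zeta/\overline{\zeta_j}$ of modulus $\zeta/|\zeta_j|>1$, hence is not a root of unity by Kronecker's theorem. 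Baker's theorem on linear forms in logarithms of algebraic numbers then supplies a lower bound of the shape $|P(n)|\geq Cn^{-\kappa}$, and dividing $-\log|R_n|$ by $n\log\zeta$ absorbs this into an $O(\log n/n)$ error which collapses $\liminf$ and $\limsup$ to the common value $-\log f/\log\zeta$.

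The second case, $\alpha\notin\mathbb{Q}(\zeta)$, is the principal obstacle and the reason the statement remains only a conjecture. For algebraic $\alpha$ of degree greater than one over $\mathbb{Q}(\zeta)$ I would try to rule out any sequence $n_1<n_2<\cdots$ with $\Vert\alpha\zeta^{n_i}\Vert\leq\zeta^{-\epsilon n_i}$ by converting each such inequality into an exceptionally good rational approximation to $\alpha$ along the $S$-unit sequence generated by $\zeta_1,\ldots,\zeta_k$ and applying the Schmidt--Schlickewei subspace theorem to the resulting family of linear forms; a non-degenerate solution should force $\alpha\in\mathbb{Q}(\zeta)$, contradicting the hypothesis. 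Making the quantitative control strong enough to cover every $\epsilon>0$ is where I expect the real difficulty to lie, and I also note that for certain transcendental $\alpha$ the conjecture as literally stated is refuted by Theorem \ref{kommtno}, so any proof must implicitly restrict to the algebraic range.
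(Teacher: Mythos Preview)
This statement is a \emph{conjecture} in the paper; no proof is offered there. The paper's contribution toward it consists of Theorem~\ref{haaz} and Corollary~\ref{zzz}, which (for $\alpha=1$) reduce the question to whether $\psi_{2}/(2\pi)$ can be a Liouville number, and Theorem~\ref{kaffee}, which settles the quadratic case.

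Your observation at the end is decisive: combining Theorem~\ref{kommtno} with Theorem~\ref{gans} already shows the conjecture is \emph{false} as literally stated. But you do not need transcendental $\alpha$ for a counterexample. Take $\zeta=(1+\sqrt{5})/2$ and $\alpha=1/2$: since $\zeta^{n}+\bar{\zeta}^{n}=L_{n}$ is odd whenever $3\nmid n$, one has $\Vert\zeta^{n}/2\Vert\to 1/2$ along those $n$, so $\underline{\sigma}(1/2,\zeta)=0$; yet along $n\equiv 0\pmod 3$ one gets $\Vert\zeta^{n}/2\Vert=|\bar{\zeta}|^{n}/2$, giving $\overline{\sigma}(1/2,\zeta)=1$. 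This pinpoints a genuine gap in your Case~1 argument: the ``routine $O(\log D/n)$ adjustment'' from $\alpha$ to $D\alpha$ is not routine at all. Multiplying by $D$ preserves $\overline{\sigma}$ (Proposition~\ref{interessant}) but can raise $\underline{\sigma}$ strictly, because $T_{n}/D$ need not be an integer for all large $n$.

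Restricted to $\alpha$ an algebraic integer in $\mathbb{Q}(\zeta)$ (so $D=1$), your Baker argument is sound and genuinely goes beyond the paper. By Smyth's Theorem~\ref{smyth} there is at most one complex-conjugate pair of maximal modulus $f$, so your worry about several such pairs evaporates; the remainder is $R_{n}=2|\alpha_{2}|f^{n}\cos(n\theta_{2}+\psi)+O((f')^{n})$, and since $e^{i\theta_{2}}=\zeta_{2}/|\zeta_{2}|$ and $e^{i\psi}=\alpha_{2}/|\alpha_{2}|$ are algebraic while $\zeta_{2}/\bar{\zeta}_{2}$ is not a root of unity by Mignotte's Theorem~\ref{naja}, Baker's theorem on linear forms in logarithms gives $|\cos(n\theta_{2}+\psi)|\geq Cn^{-\kappa}$. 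This is exactly the input that Corollary~\ref{zzz} asks for but the paper does not supply, so your approach in fact \emph{proves} $\underline{\sigma}(\alpha,\zeta)=\overline{\sigma}(\alpha,\zeta)=-\log f/\log\zeta$ for all algebraic integers $\alpha\in\mathbb{Q}(\zeta)$, in particular for $\alpha=1$. The Case~2 sketch via the subspace theorem is plausible for algebraic $\alpha\notin\mathbb{Q}(\zeta)$ but, as you acknowledge, remains speculative.
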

 
We want to give a result that somehow quantifies this,
which connects the constant $\underline{\sigma}(1,\zeta)$ with the 
Diophantine problem (\ref{eq:vir}) for some $\boldsymbol{\zeta}$ arising from the
conjugates of $\zeta$. First we define

\begin{definition}
 Denote with $Liov$ the set of Liouville numbers, which is irrational real numbers $\zeta$ such
that $\lambda_{1}(\zeta)= \infty$.
\end{definition}

It is known that $Liov$ consists only of transcendental numbers, as irrational 
algebraic numbers $\zeta$
have $\lambda_{1}(\zeta)= 1$ by Roth`s Theorem, that can be found in \cite{5}, and (\ref{eq:hunt}).

Liov has Hausdorff dimension $0$, which is an easy consequence of the following Theorem by Bernik \cite{4}:


\begin{theorem}[Bernik]
 For a polynomial $P$ let $H(P)$ be the maximum absolute value of its 
coefficients and let
\[
A(w) := \left\{ \zeta\in{\mathbb{R}}: \vert P(\zeta)\vert < H(P)^{-w} 
\text{\quad for \quad infinitely \quad many \quad} P\in{\mathbb{Z}[X]}, \quad \text{deg}(P)\leq n\right\}
\]
Then the Hausdorff dimension of $A(w)$ equals $\frac{n+1}{w+1}$.
\end{theorem}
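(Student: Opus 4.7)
\emph{Plan.} The formula is only nontrivial for $w \geq n$, as $\frac{n+1}{w+1}$ exceeds $1$ for $w < n$; in that regime Sprindzuk's resolution of Mahler's conjecture yields $\dim A(w)=1$ by a direct Lebesgue measure argument, so the statement is automatic. For $w \geq n$, I would establish the two inequalities $\dim A(w) \leq \frac{n+1}{w+1}$ and $\dim A(w) \geq \frac{n+1}{w+1}$ separately on a fixed bounded interval $I \subset \mathbb{R}$ and then pass to $\mathbb{R}$ by countable union.

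\emph{Upper bound.} For each integer $H \geq 1$ let $\mathcal{P}_H$ denote the family of integer polynomials of degree $\leq n$ with height exactly $H$, so that $|\mathcal{P}_H| \ll H^{n+1}$. Write $A(w) \cap I = \limsup_H \bigcup_{P \in \mathcal{P}_H} E_P$ with $E_P := \{\zeta \in I : |P(\zeta)| < H^{-w}\}$. For each $P \in \mathcal{P}_H$ and each real root $\alpha$ of $P$ in a neighbourhood of $I$, Taylor expansion gives $|P(\zeta)| \gtrsim |P'(\alpha)|\cdot|\zeta-\alpha|$ for $\zeta$ close to $\alpha$, so the local contribution of $\alpha$ to $E_P$ lies in an interval of length $\ll H^{-w}/|P'(\alpha)|$. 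Under the generic \emph{essential} condition $|P'(\alpha)| \gg H$, this interval has length $\ll H^{-(w+1)}$, and $E_P$ splits into at most $n+1$ such pieces. Modulo the standard separate treatment of \emph{non-essential} polynomials (those for which the derivative at the relevant root is anomalously small), one thus covers $\bigcup_{P \in \mathcal{P}_H} E_P$ by $O(H^{n+1})$ intervals of length $\rho_H = H^{-(w+1)}$. For any $s > \frac{n+1}{w+1}$ the series $\sum_H H^{n+1}\rho_H^{s} = \sum_H H^{n+1-s(w+1)}$ converges, so the $s$-dimensional Hausdorff measure of $A(w) \cap I$ vanishes.

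\emph{Lower bound.} I would invoke the theory of \emph{regular systems}. The central fact, going back to Baker--Schmidt and refined by Bernik, is that the set of real algebraic numbers of degree $\leq n$ forms a regular system on any bounded interval with counting function $N(\alpha) \asymp H(\alpha)^{n+1}$, meaning algebraic numbers of height $\leq Q$ are quantitatively well-distributed at scale $Q^{-(n+1)}$. Granting this, observe that for any algebraic $\alpha$ of degree $\leq n$, height $Q$, and minimal polynomial $P_\alpha$, Taylor expansion yields $|P_\alpha(\zeta)| \ll Q \cdot |\zeta-\alpha| \leq Q^{-w}$ whenever $|\zeta-\alpha| < Q^{-(w+1)}$. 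Hence each interval $(\alpha - H(\alpha)^{-(w+1)}, \alpha + H(\alpha)^{-(w+1)})$ is contained in $A(w)$. Using the regular-system property to place many such intervals at successive scales $Q_k \to \infty$ and nesting them produces a Cantor-type subset of $A(w) \cap I$; the mass distribution principle, in the form developed by Beresnevich--Dickinson--Velani for general regular systems, then yields the lower bound $\dim \geq \frac{n+1}{w+1}$.

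\emph{Main obstacle.} The deep ingredient is the regular-system/ubiquity statement for real algebraic numbers of bounded degree. Establishing that algebraic numbers of degree $\leq n$ and height $\leq Q$ fill a bounded interval at the optimal density $Q^{n+1}$ requires delicate estimates on discriminants of integer polynomials and a careful case analysis of resonant polynomials whose derivatives at the relevant roots are much smaller than the generic size $H$; it is precisely this technical work that forms the novelty of Bernik's theorem beyond the classical Jarnik--Besicovitch result for $n=1$.
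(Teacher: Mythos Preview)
The paper does not prove this theorem. Bernik's theorem is quoted from the literature with a citation to \cite{4} (Bernik's original paper) and is used only as a black box to deduce that the set of Liouville numbers has Hausdorff dimension $0$ (via the special case $n=1$). There is therefore no ``paper's own proof'' to compare your proposal against.

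That said, your outline is a faithful sketch of how Bernik's theorem is actually established in the original source and in subsequent refinements: a covering argument for the upper bound, and a regular-systems/ubiquity construction (in the spirit of Baker--Schmidt, later sharpened by Beresnevich and collaborators) for the lower bound. You correctly identify the genuine difficulty as the treatment of \emph{inessential} polynomials in the upper bound and the verification of the regular-system property for algebraic numbers of bounded degree in the lower bound; both require nontrivial control on discriminants and on the distribution of roots of integer polynomials, which is the substantive content of Bernik's work. Your proposal is an honest plan rather than a proof, and you flag this appropriately in your final paragraph.
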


The special case $n=1$ proves the claim concerning the Hausdorff dimension. So the set $Liov$ is small in some sense,
which is of interest with respect to our following Theorem \ref{haaz}.
For its proof we need

\begin{theorem} [Smyth] \label{smyth}
Let $\zeta$ be a Pisot number and $\zeta_{1}=\zeta,\zeta_{2},\ldots,\zeta_{k}$ be its conjugates.
Then $\vert \zeta_{i}\vert =\vert \zeta_{j}\vert$ for $i\neq j$ implies $\overline{\zeta_{i}}=\zeta_{j}$.
\end{theorem}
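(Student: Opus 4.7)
The plan is to derive a contradiction from the assumption $|\zeta_i|=|\zeta_j|$ with $i\neq j$ but $\overline{\zeta_i}\neq\zeta_j$, by transporting the resulting multiplicative identity via the Galois group of the splitting field and confronting it with the Pisot property of $\zeta$. First I would reduce to $i,j\geq 2$, since $|\zeta_l|<1$ for $l\geq 2$ rules out $|\zeta_j|=|\zeta_1|=\zeta>1$. Let $K=\mathbb{Q}(\zeta_1,\ldots,\zeta_k)\subset\mathbb{C}$ be the splitting field of the minimal polynomial of $\zeta$ and $G=\mathrm{Gal}(K/\mathbb{Q})$. Since this polynomial has real coefficients, $K$ is stable under complex conjugation $c$, so $c\in G$ and $\overline{\zeta_l}$ is again one of the conjugates $\zeta_{l'}$. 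The hypothesis $|\zeta_i|=|\zeta_j|$ then reads as the algebraic identity $\zeta_i\,\overline{\zeta_i}=\zeta_j\,\overline{\zeta_j}$ in $K$.

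The idea is to apply some $\sigma\in G$ sending one of the four factors to the Pisot number $\zeta=\zeta_1$, and to observe that under the running assumptions no other factor on either side can equal $\zeta$, so each has modulus strictly less than $1$. I would split into three subcases. If both $\zeta_i$ and $\zeta_j$ are real, then $\zeta_j=-\zeta_i$; applying $\sigma$ with $\sigma(\zeta_i)=\zeta$ forces $\sigma(\zeta_j)=-\zeta$, which would then be a conjugate of $\zeta$ of modulus $\zeta>1$, contradicting the Pisot property. If exactly one of $\zeta_i,\zeta_j$, say $\zeta_j$, is real, then $\sigma$ with $\sigma(\zeta_j)=\zeta$ transforms the identity into $\zeta^2=\sigma(\zeta_i)\,\sigma(\overline{\zeta_i})$; neither right-hand factor can equal $\zeta$ (that would force $\zeta_i=\zeta_j$, respectively $\overline{\zeta_i}=\zeta_j$, both excluded), so the right side has modulus strictly less than $1$, contradicting $\zeta^2>1$.

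The delicate subcase is when both $\zeta_i$ and $\zeta_j$ are non-real, so that $\zeta_i,\overline{\zeta_i},\zeta_j,\overline{\zeta_j}$ are four distinct conjugates of $\zeta$. Applying $\sigma$ with $\sigma(\zeta_i)=\zeta$ yields $\zeta\,\sigma(\overline{\zeta_i})=\sigma(\zeta_j)\,\sigma(\overline{\zeta_j})$, in which all three right-hand factors are conjugates of $\zeta$ distinct from $\zeta$ itself; taking moduli now only gives $|\sigma(\overline{\zeta_i})|<1/\zeta$, which is consistent with the Pisot property and so yields no immediate contradiction. This is the main obstacle. To overcome it I would exploit that $\sigma(\zeta_i)=\zeta\in\mathbb{R}$ implies $c\sigma$ also sends $\zeta_i$ to $\zeta$; multiplying the identity for $\sigma$ by its $c$-conjugate version then produces the real relation $\zeta^2\,|\sigma(\overline{\zeta_i})|^2=|\sigma(\zeta_j)|^2\,|\sigma(\overline{\zeta_j})|^2$. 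Combining this with the three analogous identities obtained by choosing $\sigma$ to send $\overline{\zeta_i},\zeta_j,\overline{\zeta_j}$ in turn to $\zeta$ yields a system of four modulus relations among conjugates of $\zeta$; confronting this system with the Vieta lower bound $\prod_{l\geq 2}|\zeta_l|\geq 1/\zeta$ for the monic Pisot polynomial should amplify the Pisot gap enough to force the contradiction. The delicate step, which I expect to be the main technical difficulty, is controlling the tangled permutation action of the remaining elements of $G$ on the four indices $\{i,j,i',j'\}$ (where $\zeta_{i'}=\overline{\zeta_i}$ and $\zeta_{j'}=\overline{\zeta_j}$), since $G$ and $c$ do not commute in general.
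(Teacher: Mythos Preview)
The paper does not supply a proof of this theorem: it simply writes ``See \cite{9} for a proof'' and uses the result as a black box. So there is no in-paper argument to compare against; what I can assess is whether your sketch actually closes.

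Your Cases 1 and 2 are fine. The gap is Case 3. What you have produced there is the relation $\zeta_i\overline{\zeta_i}=\zeta_j\overline{\zeta_j}$ with $\zeta_i,\overline{\zeta_i},\zeta_j,\overline{\zeta_j}$ four \emph{distinct} conjugates of $\zeta$, and you then try to extract a contradiction by transporting with various $\sigma\in G$, taking moduli, and invoking the Vieta bound $\prod_{l\ge 2}|\zeta_l|\ge 1/\zeta$. This does not work as written: once you multiply your four ``analogous identities'' the factors $\zeta^2$ appear symmetrically on both sides and cancel, and you are left with an equality between two products of moduli of conjugates inside the unit disc, which carries no contradiction. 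The Vieta bound only controls the \emph{full} product $\prod_{l\ge 2}|\zeta_l|$, not the particular subproducts your $\sigma$'s produce, so it gives no leverage here. Your own caveat about ``the tangled permutation action'' is exactly the point: without a structural handle on how $G$ moves the four indices, the modulus bookkeeping is inert.

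Note what Case 3 really is: the identity $\zeta_i\overline{\zeta_i}\zeta_j^{-1}\overline{\zeta_j}^{-1}=1$ is a nontrivial multiplicative relation among the conjugates of a Pisot number. Ruling such relations out is precisely Mignotte's theorem, which the paper records separately as Theorem~\ref{naja} and also cites without proof. So your Case 3 is not a bookkeeping lemma but the heart of the matter, and your Galois-plus-Vieta outline does not supply it. Either invoke Mignotte (at which point the argument is no longer self-contained and you have reversed the paper's logical order, since it states Smyth first), or consult Smyth's original argument in \cite{9}, which proceeds differently.
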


See \cite{9} for a proof. We will also need a closely related deeper result.

\begin{theorem} [Mignotte] \label{naja}
For a Pisot number, there is no nontrivial multiplicative relation between its conjugates.  
\end{theorem}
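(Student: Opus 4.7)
The plan is to argue by contradiction, combining Galois theory, Smyth's Theorem~\ref{smyth}, and Baker's theorem on linear forms in logarithms of algebraic numbers. Let $\zeta_1=\zeta,\zeta_2,\ldots,\zeta_k$ denote the conjugates, and suppose for contradiction that there exist integers $a_1,\ldots,a_k$ giving a nontrivial relation $\prod_{j=1}^k\zeta_j^{a_j}=1$ (nontrivial meaning not coming from the obvious relations forced by $\prod_j\zeta_j\in\mathbb{Z}$ when $\zeta$ is a unit).

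The first step is to exploit the Galois action. Let $K$ be a splitting field of the minimal polynomial of $\zeta$ and set $G:=\mathrm{Gal}(K/\mathbb{Q})$. Then
\[
\Lambda := \Bigl\{(b_1,\ldots,b_k)\in\mathbb{Z}^k : \prod_{j=1}^k\zeta_j^{b_j}=1\Bigr\}
\]
is a $\mathbb{Z}[G]$-submodule of $\mathbb{Z}^k$, with $G$ acting by the transitive permutation representation on $\{\zeta_1,\ldots,\zeta_k\}$. The $G$-invariant sublattices of a transitive permutation module admit a short classification (via rational characters), and the purpose of this step is to narrow the shape of $\Lambda$ down to candidates that must conflict with an Archimedean constraint derived from $|\zeta_1|>1>|\zeta_j|$ for $j\geq 2$.

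Second, I would translate the assumed relation into analytic data. Taking absolute values and arguments yields
\[
\sum_{j=1}^k a_j\log|\zeta_j|=0 \qquad\text{and}\qquad \sum_{j=1}^k a_j\arg(\zeta_j)\in 2\pi\mathbb{Z}.
\]
Smyth's Theorem~\ref{smyth} says two distinct conjugates of equal absolute value are complex conjugate pairs, so pairing $\zeta_i$ with $\overline{\zeta_i}=\zeta_{i'}$ collapses $a_i\log|\zeta_i|+a_{i'}\log|\zeta_{i'}|$ into the single term $(a_i+a_{i'})\log|\zeta_i|$. After this regrouping the absolute-value equation becomes a $\mathbb{Z}$-linear relation among the pairwise distinct real numbers $\log\zeta,\log|\zeta_{j_2}|,\ldots,\log|\zeta_{j_m}|$, while the argument equation produces a matching constraint on the imaginary parts that is compatible only with very restricted $\Lambda$.

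Finally, I would close the argument by invoking Baker's theorem: nonzero algebraic numbers whose logarithms are $\mathbb{Q}$-linearly independent are also $\overline{\mathbb{Q}}$-linearly independent, and its quantitative refinements preclude any accidental nontrivial integer relation. Combined with the Galois-module analysis of step one, the regrouped relation of step two must force all coefficients $a_j$ to vanish (or to correspond to the excluded norm relation), contradicting the nontriviality assumption. The main obstacle lies in step one: systematically ruling out every candidate $\mathbb{Z}[G]$-invariant sublattice of $\mathbb{Z}^k$, while tracking the interaction between the argument equation modulo $2\pi$ and the complex conjugate pairings enforced by Smyth's theorem, is intricate, and is where Mignotte's original argument draws on the full strength of transcendence theory. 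This is why the result lies genuinely deeper than Smyth's theorem and cannot be reached by the elementary symmetric function methods used in Theorem~\ref{zauber}.
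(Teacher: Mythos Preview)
The paper does not actually prove Theorem~\ref{naja}; it merely cites Mignotte's note \cite{8} with the sentence ``See \cite{8} for a more precise definition and a proof.'' So there is no in-paper argument to compare against, and your task was really to supply (or sketch) an independent proof.

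Your sketch contains a genuine conceptual error in the third step. Baker's theorem runs in the opposite direction from what you need: it says that if $\log\alpha_1,\ldots,\log\alpha_n$ are $\mathbb{Q}$-linearly independent then they are $\overline{\mathbb{Q}}$-linearly independent, and the quantitative versions give lower bounds for a linear form in logarithms \emph{once you already know it is nonzero}. Here the goal is precisely to show that the integer linear form $\sum_j a_j\log\zeta_j$ (taken in $\mathbb{C}/2\pi i\mathbb{Z}$) vanishes only trivially, i.e.\ to establish $\mathbb{Q}$-linear independence. Baker gives you nothing toward that; invoking it is circular. The phrase ``its quantitative refinements preclude any accidental nontrivial integer relation'' is therefore not a valid step, and since this is where your sketch actually claims to close the argument, the proposal has a real gap rather than just missing details.

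The first two steps are more on target. Viewing the relation lattice $\Lambda\subset\mathbb{Z}^k$ as a $\mathbb{Z}[G]$-module and exploiting the Archimedean constraint coming from the sign pattern of $(\log|\zeta_1|,\ldots,\log|\zeta_k|)$ (one positive entry, the rest negative, with Smyth's Theorem~\ref{smyth} controlling equalities among the negative ones) is exactly the kind of structure Mignotte uses. But the contradiction is obtained from this algebraic and Archimedean information alone, not from transcendence theory: one shows that any nonzero $G$-stable sublattice would force a vector orthogonal to all $G$-translates of $(\log|\zeta_j|)_j$, and the Pisot sign pattern together with Smyth's result makes this impossible. Your remark that ``Mignotte's original argument draws on the full strength of transcendence theory'' is thus a mischaracterisation; the proof is short and does not use Baker.
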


See \cite{8} for a more precise definition and a proof. 

\begin{theorem} \label{haaz}
 Let $\zeta=\zeta_{1}$ be a Pisot number of degree $[\mathbb{Q}(\zeta):\mathbb{Q}]=k$.
 Further let the conjugates $\zeta_{1}=\zeta,\zeta_{2},\ldots,\zeta_{k}$
be labeled by decreasing absolute values, such that in particular
$\vert \zeta_{2}\vert= \max_{2\leq j\leq k} \vert \zeta_{j}\vert$.
Put $\zeta_{2}=R_{2}e^{i\psi_{2}}$ with $0\leq \psi_{2}<2\pi$.

Suppose $\overline{\sigma}(1,\zeta)> \underline{\sigma}(1,\zeta)$, 
which by Theorem \ref{zauber} in particular holds if
$\overline{\sigma}(1,\zeta)>\frac{1}{k-1}$. Then the following holds

\[ 
\frac{\psi_{2}}{2\pi}\in{Liov}.	 
\]
\end{theorem}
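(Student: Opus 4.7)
The idea is to identify $\Vert \zeta^n \Vert$ asymptotically with a single cosine term built from $\zeta_2$, then translate the hypothesis $\overline{\sigma}(1,\zeta) > \underline{\sigma}(1,\zeta)$ into the statement that this cosine vanishes at exponential rate along a subsequence, from which the Liouville property of $\theta := \psi_2/(2\pi)$ will follow. Recall from (\ref{eq:hansbirschl}) that $\Vert \zeta^n \Vert = \vert \sum_{j=2}^k \zeta_j^n \vert$ for $n \ge n_0$, and combining (\ref{eq:fritzi}) with Corollary \ref{remark} pins down the exact value
\[
\underline{\sigma}(1,\zeta) \;=\; -\frac{\log R_2}{\log \zeta}.
\]

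The first step is to rule out the case $\zeta_2 \in \mathbb{R}$. If $\zeta_2$ were real, then Smyth's Theorem \ref{smyth} forces $\vert \zeta_j \vert < R_2$ for all $j \ge 3$ (otherwise some $\zeta_j = \overline{\zeta_2} = \zeta_2$, contradiction), so $\sum_{j=2}^k \zeta_j^n = \zeta_2^n(1+o(1))$ and $\Vert \zeta^n \Vert / R_2^n \to 1$, forcing $\sigma_n(1,\zeta) \to -\log R_2/\log \zeta = \underline{\sigma}(1,\zeta)$, contradicting the hypothesis. Hence $\zeta_2 \notin \mathbb{R}$, and Smyth again yields a relabelling with $\zeta_3 = \overline{\zeta_2}$. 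Splitting off this pair gives
\[
\Vert \zeta^n \Vert \;=\; \bigl\vert\, 2R_2^n \cos(n\psi_2) + S_n \,\bigr\vert, \qquad S_n := \sum_{j=4}^{k} \zeta_j^n,
\]
with $\vert S_n \vert \le (k-3) r^n$ and $r := \vert \zeta_4 \vert < R_2$ (and $S_n \equiv 0$ if $k = 3$).

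Next I would extract a subsequence from the hypothesis: there exists $\epsilon > 0$ and $n_i \to \infty$ with $\Vert \zeta^{n_i} \Vert \le R_2^{n_i} \zeta^{-n_i \epsilon}$. Dividing the displayed identity by $R_2^{n_i}$, using the bound on $S_{n_i}$ and the triangle inequality, yields
\[
\vert \cos(n_i \psi_2) \vert \;\le\; C\gamma^{n_i}, \qquad \gamma := \max\bigl(\zeta^{-\epsilon},\, r/R_2\bigr) \in (0,1),
\]
for some constant $C > 0$. Hence each $n_i \psi_2$ lies within $O(\gamma^{n_i})$ of a point $(2m_i + 1)\pi/2$, $m_i \in \mathbb{Z}$. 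Multiplying through by $2/\pi$ and noting $2(2m_i+1) \in \mathbb{Z}$ converts this, with $N_i := 4n_i$, into
\[
\Vert N_i \theta \Vert \;\le\; C' \gamma^{n_i} \;=\; C' \gamma^{N_i/4}.
\]
This exponential decay in $N_i$ dominates every polynomial bound $N_i^{-\mu}$, so $\lambda_1(\theta) = \infty$.

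To conclude $\theta \in Liov$ it remains to verify that $\theta$ is irrational. If $\theta = p/q$ with $q \ge 1$, then $\zeta_2^q = R_2^q \in \mathbb{R}_{>0}$, hence $\zeta_2^q = \overline{\zeta_2}^q = \zeta_3^q$, a non-trivial multiplicative relation between the distinct Pisot conjugates $\zeta_2, \zeta_3$, contradicting Mignotte's Theorem \ref{naja}. The principal subtlety is the factor $4$ rather than $2$ in the last step: the cosine vanishing places $n_i \psi_2$ only near half-integer multiples of $\pi$, so doubling once gives approximations to $(2m_i+1)\pi$, still not a multiple of $2\pi$, and one must double again to land at $(4m_i+2)\pi \in 2\pi\mathbb{Z}$ and thus obtain an honest bound on $\Vert N_i \theta \Vert$. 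Secondary technical points are the use of Smyth twice (to eliminate the real case and to pair $\zeta_3$ with $\overline{\zeta_2}$) and the quantitative passage from $\vert \cos(x)\vert \le \delta$ to $\vert x - (\pi/2 + m\pi)\vert = O(\delta)$, both of which are routine.
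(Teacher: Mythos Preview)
Your proof is correct and follows the same overall architecture as the paper's: rule out real $\zeta_2$ via Smyth, pair $\zeta_2$ with $\zeta_3=\overline{\zeta_2}$, extract exponential decay of $\vert\cos(n_i\psi_2)\vert$ along a subsequence from the hypothesis, convert this into $\Vert 4n_i\theta\Vert$ decaying exponentially, and exclude rationality of $\theta$ via Mignotte's Theorem~\ref{naja}. The one substantive difference is the conversion step. The paper applies the double-angle identity $\cos(2a)=2\cos^2(a)-1$ twice to obtain $\cos(4n_i\psi_2)=1+o(\gamma^{2n_i})$, hence $\sin(4n_i\psi_2)=o(\gamma^{n_i})$, and from there $4n_i\psi_2\in 2\pi\mathbb{Z}+o(\gamma^{n_i})$; you instead read off directly that $\vert\cos(x)\vert\le\delta$ forces $x$ within $O(\delta)$ of $\tfrac{\pi}{2}+\pi\mathbb{Z}$ and then multiply by $4$. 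Your route is a little more direct and makes the origin of the factor $4$ (which you correctly flag) transparent; the paper's route happens to produce a squared error term, but since both bounds are already exponential this buys nothing for the conclusion.
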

	 
\begin{proof}
First note that by Smyth's Theorem \ref{smyth} the conjugate $\zeta_{2}$ is 
determined by the absolute value property up to complex conjugation.
Define $r_{j}$ by $\vert \zeta_{j}\vert= r_{j}\vert \zeta_{2}\vert=r_{j}R_{2}$ for $3\leq j\leq k$.
Put $\zeta_{3}=\overline{\zeta_{2}}$ in case of non-real $\zeta_{2}$.
Then, clearly $0<r_{k}\leq r_{k-1}\ldots\leq r_{3}<1$ in case of real $\zeta_{2}$ as well as
 $0<r_{k}\leq r_{k-1}\leq \ldots \leq r_{4}<r_{3}=1$ in case of non-real $\zeta_{2}$
 by Smyth's Theorem \ref{smyth}.

In view of this, if $\zeta_{2}$ is real, i.e. $\psi_{2}=0$, for $n$ sufficiently large
\begin{equation} \label{eq:reell}
\Vert \zeta^{n}\Vert = \left\vert \sum_{j=2}^{k} \zeta_{j}^{n}\right\vert 
\geq \left\vert\zeta_{2}\right\vert^{n}\left(1-\sum_{j=3}^{k} r_{j}^{n}\right)
\geq \frac{1}{2}\cdot \left\vert\zeta_{2}\right\vert^{n}, \qquad n\geq n_{0}.
\end{equation}
Hence and by Corollary \ref{remark} this case clearly yields
$\overline{\sigma}(1,\zeta)= \underline{\sigma}(1,\zeta)\leq -\frac{\log\vert\zeta_{2}\vert}{\log \zeta}\leq \frac{1}{k-1}$.

If otherwise $\psi_{2}\neq 0$, we similarly have
\begin{equation} \label{komplex}
 \Vert \zeta^{n}\Vert = \left\vert \sum_{j=2}^{k} \zeta_{j}^{n}\right\vert 
\geq \left\vert\zeta_{2}^{n}\left(2 \vert\rm{cos}(n\psi_{2})\vert-\sum_{j=4}^{k} r_{j}^{n}\right)\right\vert
\geq \left\vert\zeta_{2}\right\vert^{n}\left(2 \rm{cos}(n\psi_{2})-(k-3)r_{4}^{n}\right), \qquad n\geq n_{0}.
\end{equation} 
Thus by Corollary \ref{remark} and noting 
\begin{equation} \label{eq:oft}
0<r_{4}<\frac{1+r_{4}}{2}<1,
\end{equation}
if $\overline{\sigma}(1,\zeta)> \underline{\sigma}(1,\zeta)= -\frac{\log\vert\zeta_{2}\vert}{\log \zeta}$,
 we must have a sequence of values $n$ such that 
\begin{equation} \label{eq:diese}
 \rm{cos}(n\psi_{2})= o\left(\left(\frac{1+r_{4}}{2}\right)^{n}\right), \qquad n\to\infty.
\end{equation}
To avoid subindices we only consider this sequence of values $n$ in the sequel.
Using twice the addition Theorem 
$\rm{cos}(2a)=\rm{cos}^{2}(a)-\sin^{2}(a)=2\rm{cos}^{2}(a)-1$ for cosine we infer
 $\rm{cos}(4n\psi_{2})=8\rm{cos}^{4}(n\psi_{2})-4\rm{cos}^{2}(n\psi_{2})+1=1+o\left(\left(\frac{1+r_{4}}{2}\right)^{2n}\right)$
as $n\to\infty$.
Hence 
\begin{equation} \label{eq:mueh}
\sin(4n\psi_{2})=\sqrt{1-\rm{cos}^{2}(4n\psi_{2})}=o\left(\left(\frac{1+r_{4}}{2}\right)^{n}\right), \qquad n\to\infty.
\end{equation}
But by (\ref{eq:oft}) the right hand side of (\ref{eq:mueh}) tends to $0$.
Clearly, this implies 
\begin{equation} \label{eq:selten}
4n\psi_{2}=2\pi m_{n}+o(1)
\end{equation}
as $n\to\infty$ for an integer sequence $(m_{n})_{n\geq 1}$.
 Using $\frac{1}{2}\vert x\vert \leq \vert \rm{sin}(x+2m\pi)\vert$ for an integer $m$ and $x\in{[-0.1,0.1]}$,
(\ref{eq:mueh}) yields 
\[
4n\psi_{2}=2m_{n}\pi+o\left(\left(\frac{1+r_{4}}{2}\right)^{n}\right)
\]
too for the sequence in (\ref{eq:diese}) and the corresponding integer sequence $(m_{n})_{n\geq 1}$
from (\ref{eq:selten}).
But again by (\ref{eq:oft}), this in particular implies $4n\psi_{2}=2m_{n}\pi+o(n^{-\nu})$ or equivalently
$(4n)\cdot \frac{\psi_{2}}{2\pi}-m_{n}=o(n^{-\nu})$ for any $\nu>0$ and $n$ sufficiently large.
By definition this implies $\frac{\psi_{2}}{2\pi}$ is either nonzero rational (the case
of a real numbers $\zeta_{2}$ was treated in (\ref{eq:reell})) or a Liouville number. 
To see that it cannot be rational we use Mingnotte's Theorem \ref{naja}. If $\frac{\psi_{2}}{2\pi}$
is nonzero rational, then $\zeta_{2}$ would be a real multiple of a root of unity.
In this case for some positive integer $L$ we would have 
$\zeta_{2}^{2L}= \vert \zeta_{2}\vert^{2L}= \zeta_{2}^{L}\overline{\zeta_{2}}^{L}$, 
so $\zeta_{2}^{L}=\overline{\zeta_{2}}^{L}$,
but this clearly is a nontrivial multiplicative relation between the roots $\zeta_{2},\zeta_{3}=\overline{\zeta_{2}}$
contradicting Mignotte's Theorem.   
\end{proof}
 	 
Theorem \ref{haaz} in fact even shows the stronger condition 
\[
 \left\Vert n\frac{\phi_{2}}{2\pi}\right\Vert \leq e^{-\nu n}
\]
has infinitely many integral solutions $n$ for some $\nu>0$ in 
case of $\overline{\sigma}(1,\zeta)> \underline{\sigma}(1,\zeta)$.
However, as algebraic numbers are only countable and angles $\frac{\psi_{j}}{2\pi}$ in Theorem \ref{haaz} 
 are typically expected to be transcendental, the pathological case cannot be excluded easily.
However, we give another Corollary affirming it should not happen. 

\begin{corollary} \label{zzz}
If for any real $\theta$ in the splitting field of a Pisot polynomial the value $\frac{\rm{arctan}(\theta)}{\pi}$ is  
not a Liouville number, then $\underline{\sigma}(1,\zeta)=\overline{\sigma}(1,\zeta)$
for any Pisot number $\zeta$.
In particular, if for any algebraic number $\theta$ the value $\frac{\rm{arctan}(\theta)}{\pi}$ is either rational 
or irrational and no Liouville number, then $\underline{\sigma}(1,\zeta)=\overline{\sigma}(1,\zeta)$
for any Pisot number $\zeta$.
\end{corollary}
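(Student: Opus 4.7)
I would argue by contrapositive: assume some Pisot number $\zeta$ satisfies $\underline{\sigma}(1,\zeta)<\overline{\sigma}(1,\zeta)$, and produce a real algebraic $\theta$ (ideally in the splitting field $L$ of the Pisot polynomial of $\zeta$) for which $\arctan(\theta)/\pi$ is Liouville, contradicting the hypothesis.

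By Theorem \ref{haaz} the quantity $\psi_2/(2\pi)$ is Liouville, hence irrational; so $\psi_2 \neq 0$ and $\zeta_2$ is non-real, whence $\zeta_3 := \overline{\zeta_2}$ is another conjugate of $\zeta$ and also lies in $L$. Set $\omega := \zeta_2/\zeta_3 \in L$, so that $\omega = e^{2i\psi_2}$ with $|\omega|=1$. A short computation (multiplying numerator and denominator by $e^{-i\psi_2}$) yields the identity
\[
\frac{\omega-1}{\omega+1} \;=\; i\tan(\psi_2),
\]
so $i\tan(\psi_2) \in L$ and squaring gives $\tan^2(\psi_2) \in L \cap \mathbb{R}$. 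In particular, $\theta := \tan(\psi_2)$ is a real algebraic number.

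Now I use the standard identity $\arctan(\theta) = \psi_2 - m\pi$ for the unique integer $m$ placing the right side in $(-\pi/2, \pi/2)$, which gives $\arctan(\theta)/\pi = \psi_2/\pi - m$. Since the Liouville property is preserved under translation by integers and multiplication by nonzero rationals, and $\psi_2/(2\pi)$ is Liouville, it follows that $\arctan(\theta)/\pi$ is Liouville. Being real and algebraic, this $\theta$ violates the hypothesis of the second (``in particular'') assertion of the corollary, which is thereby proved.

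The step I expect to be the main obstacle is verifying $\theta \in L$, not merely that $\theta$ is algebraic: the identity above only places $i\tan(\psi_2)$ in $L$, so it gives $\tan(\psi_2) \in L$ exactly when $i \in L$ (equivalently, when $\sin(2\psi_2) \in L$). When $i \in L$, the argument above directly produces the required real $\theta \in L$. When $i \notin L$, one would instead need to exploit the real information already available in $L$, such as $\cos(2\psi_2) = (\omega + \omega^{-1})/2 \in L \cap \mathbb{R}$, and combine it with an identity converting $\arccos$ into $\arctan$ to locate a real element of $L$ whose $\arctan/\pi$ inherits the Liouville property from $\psi_2/(2\pi)$; this is the delicate part of the first assertion, while the second assertion follows cleanly from the algebraicity of $\tan(\psi_2)$ alone.
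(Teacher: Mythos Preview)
Your approach is essentially the same as the paper's: the paper applies Theorem~\ref{haaz} and observes that for $\gamma=\zeta_2=R_2 e^{i\psi_2}$ the numbers $\mathrm{Re}(\gamma)=\tfrac{\gamma+\overline{\gamma}}{2}$, $\mathrm{Im}(\gamma)=\tfrac{\gamma-\overline{\gamma}}{2i}$ and their quotient $\theta=\tan(\psi_2)$ are real algebraic, then invokes $\arctan(\theta)=\psi_2$. Your computation via $\omega=\zeta_2/\overline{\zeta_2}$ and the identity $(\omega-1)/(\omega+1)=i\tan(\psi_2)$ is a minor repackaging of the same calculation, and your passage from the Liouville property of $\psi_2/(2\pi)$ to that of $\arctan(\theta)/\pi$ via a rational affine map is spelled out a bit more explicitly than in the paper. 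For the ``in particular'' assertion both arguments are complete.

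Your reservation about the first assertion is well taken and in fact applies equally to the paper's own proof. The paper asserts that $\mathrm{Im}(\gamma)=\tfrac{\gamma-\overline{\gamma}}{2i}$ lies in the splitting field $L$, but this requires $i\in L$: from $\gamma,\overline{\gamma}\in L$ one only gets $i\,\mathrm{Im}(\gamma)\in L$, hence $i\tan(\psi_2)\in L$ and $\tan^2(\psi_2)\in L\cap\mathbb{R}$, exactly as you found. Whether $i\in L$ is not automatic (for a cubic Pisot polynomial with negative discriminant $D$, the unique quadratic subfield of $L$ is $\mathbb{Q}(\sqrt{D})$, which need not be $\mathbb{Q}(i)$). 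So the stronger first assertion, as stated, is not fully justified by either argument; your honest flagging of this point is an improvement over the paper, and your suggestion to work instead with $\cos(2\psi_2)=\tfrac{1}{2}(\omega+\omega^{-1})\in L\cap\mathbb{R}$ is a reasonable avenue if one wishes to repair it.
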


\begin{proof}
  For any algebraic number $\gamma=re^{i\psi}$, since
the conjugate is algebraic in the same splitting field so are $\rm{Re}(\gamma)=\frac{\gamma+\overline{\gamma}}{2},
\rm{Im}(\gamma)=\frac{\gamma-\overline{\gamma}}{2i}$ and hence their quotient
 $\frac{\gamma-\overline{\gamma}}{i(\gamma+\overline{\gamma})}=:\theta=\rm{tan}(\psi)\in{\mathbb{R}}$ too.
So we may apply Theorem \ref{haaz} using its notation with $\gamma=\zeta_{2}$
(or equivalently $\theta=\rm{tan}(\psi_{2})$ or $\rm{arctan}(\theta)=\psi_{2}$).  
\end{proof}

\begin{remark}
The condition in Corollary \ref{zzz} is equivalent to the condition that 
 $w_{2}(\pi,\tan(\psi_{2}))=\infty$ for the two-dimensional simultaneous approximation constant $w_{2}$
in (\ref{eq:rotzz}). It is worth noting by Khinchin's transference principle Theorem \ref{khinchin} this
implies $\lambda_{2}(\pi,\rm{tan}(\psi_{2}))\geq 1$.
\end{remark}

The next Proposition shows that indeed for $k=2$, we cannot have (\ref{eq:novak}). 

\begin{theorem} \label{kaffee}
 Let $\zeta$ be a Pisot number of degree $[\mathbb{Q}(\zeta):\mathbb{Q}]=k=2$.
 Then $\sigma_{n}(1,\zeta)$ is eventually constant. In particular
 $\underline{\sigma}(1,\zeta)=\overline{\sigma}(1,\zeta)$.

Furthermore the set
\begin{equation} \label{eq:hipp}
 \left\{ t\in{\mathbb{R}}: \exists \zeta \quad \text{Pisot number of degree k=2 such that} \quad
 \underline{\sigma}(1,\zeta)= \overline{\sigma}(1,\zeta)=t\right\}
\end{equation}
is dense in $(0,1]$. Moreover 
\begin{equation}  \label{eq:neu}
\underline{\sigma}(1,\zeta)=\overline{\sigma}(1,\zeta)=1
\end{equation}
if and only if $\zeta$ is an algebraic unit.
\end{theorem}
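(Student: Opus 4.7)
The plan is to exploit the fact that a Pisot number of degree $k=2$ has a unique, necessarily real conjugate, which makes the expression for $\Vert \zeta^n\Vert$ exceptionally transparent.

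First, let $\zeta_1=\zeta$, $\zeta_2$ be the two conjugates. Since the non-real conjugates of a real algebraic number come in complex-conjugate pairs, $\zeta_2$ must be real, and by the Pisot property $|\zeta_2|<1$. By Theorem \ref{elemsatz} (as in the proof of Theorem \ref{zauber}), $\zeta^n+\zeta_2^n\in\mathbb{Z}$ for every $n\geq 1$. For $n\geq n_0$ large enough we have $|\zeta_2^n|<1/2$, so this integer is precisely $\scp{\zeta^n}$ and consequently
\[
\Vert \zeta^n\Vert = |\zeta_2|^n, \qquad n\geq n_0.
\]
Plugging into (\ref{eq:uchazi}) immediately gives $\sigma_n(1,\zeta) = -\log|\zeta_2|/\log\zeta$ for all $n\geq n_0$, proving it is eventually constant and hence that $\underline{\sigma}(1,\zeta)=\overline{\sigma}(1,\zeta)=-\log|\zeta_2|/\log\zeta$.

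For the unit characterization, note that $\zeta\zeta_2$ equals $\pm$ the constant coefficient of the (monic) minimal polynomial of $\zeta$, so it is a nonzero integer. Thus $-\log|\zeta_2|/\log\zeta=1$ is equivalent to $|\zeta\zeta_2|=1$, i.e.\ the constant coefficient is $\pm 1$, which is exactly the condition that $\zeta$ be a unit in the ring of algebraic integers. This gives (\ref{eq:neu}).

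For the density claim in (\ref{eq:hipp}), I plan to use the family of polynomials $P_{a,c}(X)=X^2-aX-c$ with positive integers $a,c$. For $a$ large compared to $c$, the discriminant $a^2+4c$ is positive and generically non-square (e.g.\ taking $c$ prime and $a$ with $a^2\not\equiv -4c \pmod{p^2}$), so $P_{a,c}$ is irreducible; its larger root $\zeta_{a,c}$ satisfies $\zeta_{a,c}>a/2\cdot 2=a$ large and its other root has absolute value $c/\zeta_{a,c}<1$, confirming that $\zeta_{a,c}$ is Pisot of degree $2$. The associated value is
\[
t_{a,c}= -\frac{\log(c/\zeta_{a,c})}{\log\zeta_{a,c}} = 1 - \frac{\log c}{\log \zeta_{a,c}}.
\]
Given a target $t\in(0,1)$ and a small $\varepsilon>0$, I take $c$ a large prime and $a=\lfloor c^{1/(1-t)}\rfloor$; then $\log \zeta_{a,c}=\log a+o(1)\to \log c/(1-t)$, so $t_{a,c}\to t$. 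Together with the value $t=1$ attained by any Pisot unit of degree $2$ (e.g.\ the golden ratio via $X^2-X-1$), this yields density in $(0,1]$.

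The only real obstacle is bookkeeping in the density step: one must check that $P_{a,c}$ really is irreducible and that its larger root is Pisot for the chosen parameters, which is straightforward but the sole point where a routine argument is needed. The first half of the theorem is essentially automatic once one notices that $\zeta_2$ is real.
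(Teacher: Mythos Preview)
Your proof is correct. The first two parts (eventual constancy of $\sigma_n(1,\zeta)$ and the unit characterization) are essentially identical to the paper's argument: both use that the single conjugate $\zeta_2$ is real with $|\zeta_2|<1$, giving $\Vert\zeta^n\Vert=|\zeta_2|^n$ for large $n$, and both invoke the Vieta/norm relation $|\zeta\zeta_2|=|\text{constant term}|$ for the unit criterion.

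For the density claim your approach genuinely differs from the paper's. The paper works with the explicit family $\zeta=N+\sqrt{d}$, $(N-1)^2+1\le d\le N^2-1$, and shows the resulting values $\chi(N,d)$ sweep out $(0,1]$ by bounding the gap $\chi(N,d)-\chi(N,d+1)$ via a derivative estimate and letting $N\to\infty$. You instead take the family $P_{a,c}(X)=X^2-aX-c$, compute the closed form $t_{a,c}=1-\log c/\log\zeta_{a,c}$, and for a given target $t\in(0,1)$ choose $c$ prime and $a=\lfloor c^{1/(1-t)}\rfloor$ so that $t_{a,c}\to t$ directly. Your route is more constructive and avoids the calculus step entirely; the paper's route has the (minor) advantage that the Pisot and irreducibility conditions are immediate from the explicit form $N+\sqrt{d}$. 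One small point: your irreducibility remark (``$a^2\not\equiv -4c\pmod{p^2}$'') is garbled; the clean argument is that for $c$ prime, $X^2-aX-c$ is reducible only if it has an integer root dividing $c$, which forces $a=c-1$, and this is excluded since $a\approx c^{1/(1-t)}\gg c$.
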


\begin{proof}
In the quadradic case the only conjugate $\zeta_{1}$ of $\zeta$ is real, so for $n\geq n_{0}$ 
large enough that $\vert \zeta_{1}\vert^{n}<\frac{1}{2}$ we have 
$\Vert\zeta^{n}\Vert= \vert\zeta_{1}\vert^{n}$, thus by definition
\begin{equation} \label{eq:foma}
\sigma_{n}(1,\zeta)=-\frac{\log \vert\zeta_{1}\vert}{\log \zeta}\in{(0,1]}, \qquad n\geq n_{0}
\end{equation}
For the second statement first consider only $\zeta$ of the shape $\zeta=N+\sqrt{d}$ and
observe that for positive integers $N,d$ the conditions
$(N-1)^{2}+1 \leq d \leq (N+1)^{2}-1, d\neq N^{2}$, are easily seen to be necessary and sufficient for
 such $\zeta$ to be a Pisot number of degree $2$. We restrict to $(N-1)^{2}+1 \leq d \leq N^{2}-1$.

Let $N$ tend to infinity and according to (\ref{eq:foma}) consider the values
\[
 \chi(N,d):=\frac{\log \vert N-\sqrt{d}\vert}{\log N+\sqrt{d}}, \qquad (N-1)^{2}-1\leq d\leq N^{2}-1.
\]
For $d=N^{2}-1$ we have $(N+\sqrt{d})=(N-\sqrt{d})^{-1}$, so in this case $\zeta=N+\sqrt{d}$ is
an algebraic unit and $\underline{\sigma}(1,\zeta)=\overline{\sigma}(1,\zeta)=1$.
Similarly for $d=N^{2}+1$. Conversely, let an arbitrary quadratic irrational $\zeta=N+M\sqrt{d}$
for some $M,N,d\in{\mathbb{Z}}$ be a unit in $\mathbb{Q}(\sqrt{d})$. Units are 
known to be precisely the elements with norm
$\rm{N}_{\mathbb{Q}(\sqrt{d})/\mathbb{Q}}(\zeta)=\zeta\zeta_{1}\in{\{-1,1\}}$, where $\zeta_{1}$ is the conjugate.
But the latter is just a reformulation of (\ref{eq:neu}).
 
For the other interval end $d=(N-1)^{2}+1$, the numerator is easily seen to tend
to $0$ whereas the denominator tends to infinity, so with $N\to \infty$ the quotient 
tends to $0$. To sum up for any fixed $N$, and with $d$ increasing
 in the interval $(N-1)^{2}-1\leq d\leq N^{2}-1$, the values $\chi(N,d)$ decrease from $1$ to $\epsilon(N)$
which tends to $0$ as $N\to\infty$. 
Consequently, to obtain that (\ref{eq:hipp}) is dense it remains to prove that with
\[
 \Phi(x,y):=\chi(x,y)-\chi(x,y+1)=\frac{\log (x-\sqrt{y})}{\log (x+\sqrt{y})}- \frac{\log (x-\sqrt{y+1})}{\log (x+\sqrt{y+1})},
 \qquad (x-1)^{2}+1\leq y\leq x^{2}-2
\]  
the value $\max_{y}\Phi(x,y)$ tends to $0$ as $x\to\infty$, where the maximum is
taken over $y$ in the given interval. This is standard analysis. 

Using $\Phi(x,y)\leq \vert \int_{y,y+1} \chi_{y}(x,t)dt\vert \leq \max \vert \chi_{y}(x,y)\vert$
with the restriction $(x-1)^{2}+1\leq y\leq x^{2}-2$ some computation shows
\begin{eqnarray*}
 \Phi(x,y)&\leq& \left\vert \frac{-\frac{1}{2\sqrt{y}(x-2\sqrt{y})}\log(x+\sqrt{y})-\frac{1}{2\sqrt{y}(x+2\sqrt{y})}\log(x-\sqrt{y})}{\log(x+\sqrt{y})^{2}}  \right\vert   \\
  &\leq& \left\vert\frac{\frac{1}{x-\sqrt{y}}-\frac{1}{x+\sqrt{y}}}{2\sqrt{y}\log(x+\sqrt{y})}\right\vert    \\
	&=& \frac{\sqrt{y}}{\log(x+\sqrt{y})\sqrt{y}(x^{2}-y)}                                   \\
	&=& \frac{1}{\log(x+\sqrt{y})(x^{2}-y)},
\end{eqnarray*}
and by $x^{2}-y\geq 1$ this indeed tends to $0$.

\end{proof}	 

We are interested in higher dimensional generalizations of Theorem \ref{kaffee}.
The point if $\underline{\sigma}(1,\zeta)=\overline{\sigma}(1,\zeta)$ was discussed
in Theorem \ref{haaz}.
Counterexamples to some of the facts are summed up in Theorem \ref{proko}.
The claims there are quite natural and suggestive but the proofs are a little technical.
We will make use of Rouche's Theorem

\begin{theorem}[Rouche]  \label{rouchee}
 Let $f,g$ be holomorphic functions defined on a simply connected open subset $K$ of the complex plane.
 If $\vert g\vert \leq \vert f\vert$ on the boundary of $K$, then $f$ and $f+g$ have the same
number of zeros in $K$.
\end{theorem}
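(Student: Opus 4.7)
The plan is to proceed via the argument principle combined with a continuous homotopy. First I would note that the hypothesis $\vert g\vert \leq \vert f\vert$ on $\partial K$, read in its usual strict form $\vert g\vert < \vert f\vert$ (needed to guarantee that ``number of zeros'' is well defined, since otherwise $f+g$ might vanish on $\partial K$), implies that for every $t\in[0,1]$ the function $h_{t}:=f+tg$ is nowhere zero on $\partial K$: on the boundary
\[
\vert h_{t}\vert \geq \vert f\vert - t\vert g\vert \geq \vert f\vert - \vert g\vert > 0.
\]
In particular $f$ and $f+g$ each have only finitely many zeros in $K$, none on $\partial K$.

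The second step is to invoke the argument principle: for a holomorphic $h$ on a neighbourhood of $\overline{K}$ with no zeros on $\partial K$, the total number of zeros of $h$ in $K$ (counted with multiplicity) equals
\[
N(h)=\frac{1}{2\pi i}\oint_{\partial K}\frac{h'(z)}{h(z)}\,dz,
\]
which follows from the residue theorem applied to the logarithmic derivative $h'/h$, whose residue at a zero of order $m$ of $h$ is $m$. Applied to the homotopy $h_{t}$ this yields
\[
N(t):=\frac{1}{2\pi i}\oint_{\partial K}\frac{h_{t}'(z)}{h_{t}(z)}\,dz
\]
equal to the number of zeros of $h_{t}$ in $K$. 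Since the integrand is jointly continuous on the compact set $\partial K\times [0,1]$ (the denominator being bounded below in absolute value by the first step), $N(t)$ depends continuously on $t$. Being integer-valued on the connected interval $[0,1]$, it is constant; hence $N(0)=N(f)$ equals $N(1)=N(f+g)$, which is the conclusion.

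The main obstacle is not conceptual but one of regularity: the argument principle presupposes that $\partial K$ is a sufficiently tame contour (for instance a finite disjoint union of piecewise smooth Jordan curves) and that $f,g$ extend holomorphically to a neighbourhood of $\overline{K}$. Under the bare hypothesis ``simply connected open subset $K$'' as stated in the theorem one must either restrict to relatively compact subdomains with smooth boundary exhausting $K$, or reinterpret the statement as referring to any such subdomain on whose boundary the inequality holds. The borderline case of equality $\vert g\vert=\vert f\vert$ on $\partial K$ must similarly be excluded or handled by perturbing $f$ by a small constant and passing to the limit, since otherwise $f+g$ can vanish on $\partial K$ and the count of interior zeros is not well posed.
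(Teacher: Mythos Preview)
Your argument is the standard and correct proof of Rouch\'e's theorem via the argument principle and the homotopy $h_{t}=f+tg$; your remarks about the needed strict inequality and the regularity assumptions on $\partial K$ are also apt. Note, however, that the paper does not actually supply a proof of this statement: it simply writes ``See \cite{12} for a proof'' and moves on, treating Rouch\'e's theorem as a quoted classical result. So there is nothing in the paper to compare your approach against beyond the bare citation; your write-up is a perfectly acceptable self-contained substitute for that reference.
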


See \cite{12} for a proof.

We will make use of a special class of Pisot polynomials in the sequel.

\begin{proposition} \label{pisotpol}
Let $k\geq 2$ and $M,N$ be integers satisfying $M\geq 3$ and $1\leq N\leq M-2$.
Then the polynomial
\[
 Q_{k,M,N}(X):= X^{k}-MX^{k-1}+N
\] 
is a Pisot polynomial.
\end{proposition}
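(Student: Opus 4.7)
The plan is to verify the three defining conditions of a Pisot polynomial for $Q := Q_{k,M,N}$: the existence of exactly one real root strictly greater than $1$, every other root lying strictly inside the unit disk, and irreducibility.

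First I would locate the large root by a direct intermediate value argument. A short computation gives
\[
Q(M-1) = (M-1)^{k-1}\bigl((M-1)-M\bigr) + N = -(M-1)^{k-1} + N,
\]
which is negative because $N \leq M-2 < M-1 \leq (M-1)^{k-1}$ for $k \geq 2$, while $Q(M) = N > 0$. Hence $Q$ has a real root $\zeta \in (M-1, M)$, in particular $\zeta > 1$.

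Next I would pin down the remaining roots via Rouch\'e's Theorem \ref{rouchee}, applied on the open unit disk $K = \{|X| < 1\}$ with the splitting $f(X) = -MX^{k-1}$ and $g(X) = X^k + N$, so that $Q = f + g$. On the boundary $|X| = 1$ one has $|f(X)| = M$ and $|g(X)| \leq 1 + N \leq M - 1 < M = |f(X)|$, where the hypothesis $N \leq M - 2$ is used in a crucial way. Rouch\'e's theorem therefore yields that $Q$ has the same number of zeros in $K$ as $f$, namely $k-1$ zeros at the origin. Since $\deg Q = k$ and $Q$ does not vanish on $|X| = 1$ (by the strict inequality), exactly one root of $Q$ lies in $|X| > 1$; this root must coincide with the real root $\zeta$ located above. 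All other roots of $Q$ therefore satisfy $|X| < 1$.

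It remains to prove irreducibility of $Q$, which by the preceding steps will imply that $\zeta$ is an algebraic integer greater than $1$ whose full set of conjugates lies strictly inside the unit circle, so $\zeta$ is a Pisot number and $Q$ is its Pisot polynomial. Suppose for contradiction $Q = A \cdot B$ with non-constant monic $A, B \in \mathbb{Z}[X]$, where without loss of generality $A(\zeta) = 0$. Then every root of $B$ is a root of $Q$ distinct from $\zeta$, hence of absolute value strictly less than $1$. Taking absolute values of the constant term $B(0) = (-1)^{\deg B} \prod_{\beta} \beta$ (product over the roots of $B$) yields $|B(0)| < 1$. But $B(0)$ is an integer and $B(0) \neq 0$ since $Q(0) = N \neq 0$, so $|B(0)| \geq 1$, a contradiction. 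This argument mirrors the one used in the proof of Proposition \ref{klaro}.

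The main obstacle is choosing the correct splitting of $Q$ for Rouch\'e's theorem; the decomposition $f = -MX^{k-1}$ versus $g = X^k + N$ is dictated by the need to exploit $N \leq M - 2$ on the unit circle, and once it is in place the remaining bounds are essentially automatic. The irreducibility step, once one knows all roots except $\zeta$ lie in $|X| < 1$, is a standard constant-term argument.
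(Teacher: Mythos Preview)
Your proof is correct and follows essentially the same strategy as the paper: Rouch\'e's theorem on the unit circle to force $k-1$ roots inside, together with an intermediate value argument for the large real root. Your Rouch\'e splitting $f=-MX^{k-1}$, $g=X^{k}+N$ differs slightly from the paper's choice $f=MX^{k-1}-N$, $g=X^{k}$, but both work equally well; in fact you are a bit more thorough, since you explicitly verify irreducibility via the constant-term argument, whereas the paper leaves this implicit (relying on Proposition~\ref{klaro}).
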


\begin{proof}
We apply Rouche's Theorem with $f(z):= Mz^{k-1}-N$,
$g(z):=z^{k}$ and the closed unit circle as $K$ to prove that
$Q_{k,M,N}$ is a Pisot polynomial. 
Indeed, on the one hand the $k-1$ zeros of $Mz^{k-1}-N$ are clearly inside the unit circle
as they all have absolute value $\sqrt[k]{\frac{N}{M}}<1$ (and differ only by 
multiplication with a $k$-th root of unity), by the conditions on $M,N$
for $\vert z\vert=1$ we have
 $\vert g(z)\vert=\vert z^{k}\vert=1<M-N<\vert Mz^{k-1}-N\vert=\vert f(z)\vert$, so
by Rouchees Theorem $Q_{k,M,N}(X)$ has $k-1$ roots inside the unit circle too.
On the other hand, it is easily seen that the remaining root is real 
and greater $1$ by the intermediate value Theorem, as $Q_{k,M,N}(1)= 1-M+N<0$ and
$\lim_{x\to\infty} Q_{k,M,N}(x)=+\infty$ because the leading coefficient of $Q_{k,M,N}$
is positive.
\end{proof}

\begin{remark}
The proof essentially shows that more general every polynomial $P(X)=X^{k}+a_{k-1}X^{k-1}+\cdots +a_{0}\in{\mathbb{Z}[X]}$
with $\vert a_{k-1}\vert \geq \vert a_{k-2}\vert+\vert a_{k-3}\vert+\cdots +\vert a_{0}\vert+2$
is a Pisot polynomial if its unique real root of absolute value greater than $1$
is positive (it is real because complex roots have a conjugate of same absolute value).
It is not hard to see by Vieta Theorem \ref{vieta} that the positivity is equivalent to $a_{k-1}<0$.
Note that nevertheless in the other case of a negative root the basic approximation properties coincide. 
\end{remark}
								
\begin{theorem} \label{proko}
Let $\zeta$ be a Pisot number of degree $[\mathbb{Q}(\zeta):\mathbb{Q}]=k\geq 3$.
Then $\sigma_{n}(1,\zeta)$ is not ultimately constant. Moreover, there are
algebraic Pisot units $\zeta$ of any given degree $k$ such that
\begin{equation}  \label{eq:neuere}
\underline{\sigma}(1,\zeta)<\frac{1}{k-1}.
\end{equation}
\end{theorem}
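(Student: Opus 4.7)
For the first statement, the plan is to argue by contradiction. Suppose $\sigma_n(1,\zeta) = s$ for all $n \geq n_0$, so $\Vert\zeta^n\Vert = \zeta^{-sn}$ exactly for large $n$. Writing $T_n := \sum_{j=2}^k \zeta_j^n$ where $\zeta_1=\zeta,\zeta_2,\ldots,\zeta_k$ are the conjugates of $\zeta$, the identity $\Vert\zeta^n\Vert=\vert T_n\vert$ valid for large $n$ (equation (\ref{eq:hansbirschl}) in the proof of Theorem \ref{zauber}) yields $\vert T_n\vert = \zeta^{-sn}$. Note $T_n$ is real, being an integer minus the real number $\zeta^n$. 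Order the conjugates so that $\vert\zeta_2\vert \geq \ldots \geq \vert\zeta_k\vert$; by Smyth's Theorem \ref{smyth} the dominant modulus is attained either by a single real $\zeta_2$ (Case 1) or by a complex conjugate pair $\zeta_3 = \overline{\zeta_2}$, with all other $\vert\zeta_j\vert$ strictly smaller (Case 2).

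In Case 1 I would divide by $\zeta_2^n$: setting $V_n := \sum_{j=3}^k (\zeta_j/\zeta_2)^n$, the hypothesis becomes $\vert 1+V_n\vert = (\zeta^{-s}/\vert\zeta_2\vert)^n$. Each $\vert\zeta_j/\zeta_2\vert<1$ forces $V_n\to 0$, so the left side tends to $1$; this gives $\zeta^{-s}=\vert\zeta_2\vert$ and hence $\vert 1+V_n\vert=1$ for all large $n$. Since $V_n$ is real and $V_n\to 0$, the only solution is $V_n = 0$ eventually. But the distinct nonzero bases $\zeta_j/\zeta_2$ make the exponentials $(\zeta_j/\zeta_2)^n$ linearly independent, so the nonempty sum (recall $k \geq 3$) cannot vanish identically, a contradiction.

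In Case 2 I would write $T_n = 2\vert\zeta_2\vert^n \cos(n\psi_2) + W_n$ with $W_n := \sum_{j \geq 4}\zeta_j^n$ (empty when $k=3$) and $\vert W_n\vert/\vert\zeta_2\vert^n \to 0$. Setting $r := \zeta^{-s}/\vert\zeta_2\vert$ and $\eta_n := W_n/\vert\zeta_2\vert^n \to 0$, the hypothesis reads $\vert 2\cos(n\psi_2) + \eta_n\vert = r^n$. For $r>1$ the right side is unbounded while the left is $\le 2+o(1)$; for $r<1$ one forces $\cos(n\psi_2)\to 0$; for $r=1$ one forces $\vert\cos(n\psi_2)\vert\to 1/2$. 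By Mignotte's Theorem \ref{naja} (as invoked in the proof of Theorem \ref{haaz}) $\psi_2/(2\pi)$ is irrational, so by Weyl's equidistribution theorem $(\cos(n\psi_2))_n$ is dense in $[-1,1]$ and cannot converge to $0$ nor have its absolute value converge to $1/2$, closing all three subcases.

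For the second statement, I would use the Pisot polynomial $P(X) = X^k - MX^{k-1} + 1$ supplied by Proposition \ref{pisotpol} (with $N=1$, $M \geq 3$); since the constant term is $\pm 1$, the associated Pisot number $\zeta$ is a unit. Then $\prod_{j=2}^k \vert\zeta_j\vert = 1/\zeta$, so the geometric mean of the non-dominant moduli is $\zeta^{-1/(k-1)}$. By Corollary \ref{remark} it suffices to show $f := \max_{j\geq 2}\vert\zeta_j\vert > \zeta^{-1/(k-1)}$ strictly, equivalently that the $\vert\zeta_j\vert$ for $j \geq 2$ are not all equal. For $k \geq 4$ this follows from Smyth's Theorem \ref{smyth}: any two conjugates of equal modulus must form a complex conjugate pair, so the $k - 1 \geq 3$ non-dominant conjugates cannot all share a modulus. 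For $k = 3$ polynomial division gives $P(X)/(X-\zeta) = X^2 + (\zeta-M)X - 1/\zeta$, whose two real roots $\zeta_2 > 0 > \zeta_3$ satisfy $\vert\zeta_2\vert - \vert\zeta_3\vert = M-\zeta > 0$, so $f = \vert\zeta_2\vert > \sqrt{\vert\zeta_2\zeta_3\vert} = \zeta^{-1/2}$. The main obstacle is the complex conjugate subcase in the first part: ruling out the near-equality $\vert\cos(n\psi_2)\vert \to 1/2$ crucially needs both Mignotte's theorem (to exclude rational $\psi_2/(2\pi)$) and Weyl equidistribution (to exclude all other angles).
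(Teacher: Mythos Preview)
Your proof is correct, and both parts take routes that differ from the paper's.

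For the first assertion, the paper does not split into real/complex cases. Instead it writes
\[
\sigma_n(1,\zeta)=\frac{\log|\zeta_2|}{\log\zeta}+\frac{\log\bigl|1+\sum_{j\ge 3}(\zeta_j/\zeta_2)^n\bigr|}{n\log\zeta},
\]
observes that if this is eventually constant then $\zeta^{Cn}=|1+\sum_{j\ge 3}(\zeta_j/\zeta_2)^n|$ for some real $C$, and then forces $C=0$ by a two-sided estimate: boundedness of the right side gives $C\le 0$, while the simultaneous angle-approximation trick from the proof of (\ref{eq:milana}) produces arbitrarily large $n$ at which each summand has positive real part, giving $C\ge 0$. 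Your argument replaces this last step by cleaner tools: in the real-dominant case you use the Vandermonde-type linear independence of distinct exponentials, and in the complex-dominant case you combine Mignotte's Theorem \ref{naja} with Weyl equidistribution to rule out convergence of $\cos(n\psi_2)$. Your version is tidier about absolute values and about why the complex case cannot stabilize, at the cost of importing Weyl's theorem; the paper's version stays within the Dirichlet-type apparatus already set up in Theorem \ref{zauber}.

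For the second assertion, the paper deliberately avoids Smyth and gives, for every $k\ge 3$, a self-contained argument that $Q_{k,M,1}$ has a real root in $(0,1)$ whose modulus strictly dominates all other non-Pisot roots (see the remark following the theorem, which notes that Smyth would have shortened this). Your route---Smyth for $k\ge 4$ together with an explicit quadratic factorization for $k=3$---is exactly the shortcut the paper mentions but chose not to take; it is shorter but less elementary. Your $k=3$ computation is correct: since $M-1<\zeta<M$, the sum of the two remaining roots is $M-\zeta\in(0,1)$ and their product is $-1/\zeta<0$, so indeed $|\zeta_2|\neq|\zeta_3|$.
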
 

\begin{proof}
Assume $\sigma_{n}(\zeta)$ would be ultimately constant.
Recall (\ref{eq:hansbirschl}) and without loss of generality
let $\vert\zeta_{2}\vert=\max_{2\leq j\leq k} \vert \zeta_{j}\vert$.
For $k\geq 3$ this gives
\begin{eqnarray*}
 \sigma_{n}(1,\zeta)&=& \frac{\log \Vert \zeta^{n}\Vert}{n\cdot \log \zeta}
=\frac{\log\sum_{j=2}^{k}\zeta_{j}^{n}}{n\cdot \log \zeta}
=\frac{\log\left(\zeta_{2}^{n}\cdot(1+\sum_{j=3}^{k}(\frac{\zeta_{j}}{\zeta_{2}})^{n})\right)}{n\cdot \log \zeta}  \\
&=& \frac{\log \zeta_{2}}{\log \zeta}+ \frac{\log\left(1+\sum_{j=3}^{k}(\frac{\zeta_{j}}{\zeta_{2}})^{n}\right)}{n\cdot \log(\zeta)}.
\end{eqnarray*}
Now if this is eventually constant so is the second term, applying exponential map yields
\begin{equation}  \label{eq:wiener}
 \zeta^{C\cdot n}= 1+\sum_{j=3}^{k}\left(\frac{\zeta_{j}}{\zeta_{2}}\right)^{n}, \qquad C\in{\mathbb{R}},n\geq n_{0}.
\end{equation}
By our assumption the right hand side in (\ref{eq:wiener}) is bounded by $k-1$, so $C\leq 0$.
Now using the same method as for the proof of the right hand side in (\ref{eq:milana})
for $\frac{\zeta_{j}}{\zeta_{n}}$ we see that there are arbitrarily large $n$ such
that every element of the sum on the right hand side is positive, so the sum is (strictly) greater than $1$.
Thus also $C\geq 0$ and hence $C=0$, hence the right hand side in (\ref{eq:wiener}) equals $1$
for all $n\geq n_{0}$. But in the argument for $C\geq 0$ we saw that this is impossible, a contradiction.

For the second claim recall (\ref{eq:tom}) in context of the proof of the right hand side
of (\ref{eq:milana}), and Corollary \ref{remark}. In view to this it suffices 
 for fixed degree $k\geq 3$ to find
a Pisot unit whose conjugates do not all lie on the circle with
origin $0$ and radius $\zeta^{-\frac{1}{k-1}}$ in the complex plane.
	
We prove that for any fixed $k$, Pisot numbers associated to the set of polynomials $Q_{k,M,1}$ arising
from Proposition \ref{pisotpol} and putting $N=1$ have this property.

Firstly, by Proposition \ref{pisotpol} these are Pisot polynomials, and as the constant term equals $1$ the associated
Pisot number is a unit by Vieta Theorem \ref{vieta}.
To prove our claim, we show that there is a real root $\zeta_{2,M}\in{(0,1)}$ which has strictly larger
absolute value than any other root inside the unit circle (i.e. apart from the Pisot number
associated to $Q_{k,M,1}$).
The existence of a real root $\zeta_{2,M}\in{(0,1)}$ of $Q_{k,M,1}$
is immediate due to $Q_{k,M,1}(0)=1>0, Q_{k,M,1}(1)=2-M<0$ 
by intermediate value Theorem. The uniqueness is easily inferred by looking at the derivative
$Q_{k,M,1}^{\prime}(x)=kx^{k-1}-M(k-1)x^{k-2}$ which obviously has unique positive root $x=\frac{M(k-1)}{k}$,
consequently apart from the Pisot root there can only be one more positive real root. 

Note that $Q_{k,M,1}(z)=0$ is equivalent to $Mz^{k-1}=z^{k}+1$,
so in particular $M\vert z\vert^{k-1}= \vert z^{k}+1\vert$. Above we saw
$Q_{k,M,1}(t)<0$ for $t\in{(\zeta_{2,M},1)}$. Applying this to $t:=\vert \zeta_{3}\vert$ with
another root $\zeta_{3,M}\neq \zeta_{2,M}$ of $Q_{k,M,1}$ with $\vert \zeta_{3,M}\vert\in{[\zeta_{2,M},1)}$, 
we must have $\vert \zeta_{3,M}\vert=\zeta_{2,M}$. More precisely, since
$\vert z^{k}+1\vert\leq \vert z\vert^{k}+1$ with equality if and only if $z^{k}$ is real,
$\zeta_{3,M}$ must be of the shape
$\zeta_{3,M}=\zeta_{2,M}e^{\frac{2\pi ij}{k}}$, $j\in{\{1,2,\ldots ,k-1\}}$. However, for these
values of $\zeta_{3,M}$ we have $M\zeta_{3,M}^{k-1}\notin{\mathbb{R}}$ as $(k-1,k)$ are
relatively prime, so $Q_{k,M,1}(\zeta_{3,M})\notin{\mathbb{R}}$,
so $Q_{k,M,1}(\zeta_{3,M})\neq 0$ in particular, contradicting the existence
of a root of $Q_{k,M,1}$ with absolute value in $[\zeta_{2,M},1)$. 
\end{proof}

\begin{remark}
 Applying the nontrivial result of C. Smyth \cite{9} we already referred to
in Theorem \ref{haaz} would have simplified the proof of the second part of Theorem \ref{proko}.
However, we wanted to present a more elementary proof.
More general, Smyth's Theorem implies

\begin{corollary}  \label{bintroll}
 Every Pisot number $\zeta$ of degree $[\mathbb{Q}(\zeta):\mathbb{Q}]\geq 4$ satisfies
$\underline{\sigma}(1,\zeta)< \frac{1}{k-1}$.
\end{corollary}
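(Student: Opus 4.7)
The plan is to sharpen the chain of inequalities already present in the proof of Theorem \ref{zauber} by using Smyth's Theorem \ref{smyth} to force a strict inequality at the key step. Recall that Corollary \ref{remark} provides
\[
\underline{\sigma}(1,\zeta)\leq -\frac{\log f}{\log \zeta},\qquad f:=\max_{2\leq j\leq k}\vert \zeta_{j}\vert<1,
\]
so it suffices to show $f>\zeta^{-1/(k-1)}$, which is equivalent to $-\log f/\log \zeta < 1/(k-1)$.

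First, I would recall the identity exploited in Theorem \ref{zauber}: the constant coefficient of the minimal polynomial $P$ of the Pisot number $\zeta$ is a nonzero integer of absolute value $\prod_{j=1}^{k}\vert \zeta_{j}\vert\geq 1$, giving
\[
\zeta\cdot\prod_{j=2}^{k}\vert \zeta_{j}\vert\geq 1.
\]
In Theorem \ref{zauber} this was bounded crudely by $\zeta f^{k-1}\geq 1$, producing the non-strict bound $f\leq \zeta^{-1/(k-1)}$. The step to improve is precisely the replacement of all factors $|\zeta_j|$ by $f$.

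Next I would invoke Smyth's Theorem \ref{smyth}: if two distinct conjugates $\zeta_{i},\zeta_{j}$ of the Pisot number $\zeta$ satisfy $\vert \zeta_{i}\vert=\vert \zeta_{j}\vert$, then $\zeta_{j}=\overline{\zeta_{i}}$. Hence among the conjugates $\zeta_{2},\ldots,\zeta_{k}$ the set $S:=\{j\in\{2,\ldots,k\}:\vert\zeta_{j}\vert=f\}$ has at most two elements (either a single real conjugate of modulus $f$, or a pair of non-real complex conjugates). This is the heart of the argument and the only non-trivial ingredient.

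Since $k\geq 4$, the set $\{2,3,\ldots,k\}$ has cardinality $k-1\geq 3$, so there exists at least one index $j_{0}\in\{2,\ldots,k\}$ with $\vert \zeta_{j_{0}}\vert<f$. Consequently
\[
\prod_{j=2}^{k}\vert \zeta_{j}\vert = \vert \zeta_{j_{0}}\vert\cdot\prod_{j\neq j_{0}}\vert \zeta_{j}\vert < f\cdot f^{k-2}=f^{k-1},
\]
where the inequality is strict because $|\zeta_{j_0}| < f$ and each remaining factor is at most $f$. Combining with $\zeta\prod_{j=2}^{k}\vert \zeta_{j}\vert\geq 1$ yields $\zeta f^{k-1}>1$, i.e.\ $f>\zeta^{-1/(k-1)}$, and therefore $-\log f/\log \zeta<1/(k-1)$. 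Corollary \ref{remark} then delivers the desired strict inequality. The only real obstacle is the appeal to Smyth's Theorem; once that is granted, the argument is just the strict version of the Vieta-type estimate already used in Theorem \ref{zauber}. Note that this argument genuinely requires $k\geq 4$: in the cubic case, a pair of complex-conjugate subordinate roots can indeed share the modulus $f$, and the strict separation breaks down.
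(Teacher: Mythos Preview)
Your proof is correct and follows essentially the same approach as the paper: both use Smyth's Theorem \ref{smyth} to bound the number of conjugates sharing the maximal modulus $f$ by two, use $k\geq 4$ to guarantee a conjugate of strictly smaller modulus, and then combine the Vieta-type product bound with Corollary \ref{remark} to obtain the strict inequality. Your write-up is more explicit about the strict-inequality step, but the argument is the same.
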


\begin{proof}
By Smyth's Theorem \ref{smyth}, there can be at most $2$ roots of every fixed absolute value $f$.
So, for $k\geq 4$ there must be two conjugates of $\zeta$ inside the unit circle with different absolute values.
Looking at the constant coefficient of the Pisot polynomial associated to $\zeta$
it follows by Vieta Theorem \ref{vieta} that the maximum absolute value of a root inside the circle is strictly larger 
than $\zeta^{-\frac{1}{k-1}}$. Thus by Corollary \ref{remark} we have
$\underline{\sigma}(1,\zeta)< \frac{1}{k-1}$ in this case. 
\end{proof}
 
\end{remark}

We now inspect the problem if (\ref{eq:hipp}) with $2$ changed to $k$ is dense is true for $k\geq 3$.
Using the polynomials from Proposition \ref{pisotpol} we can give an affirmative answer.

\begin{theorem}
 For any $k\geq 2$, (\ref{eq:hipp}) is dense in $\left(0,\frac{1}{k-1}\right]$.
\end{theorem}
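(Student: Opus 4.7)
The plan is to exhibit, for each $k \geq 2$, an explicit two-parameter family of Pisot numbers (arising from the polynomials $Q_{k,M,N}$ of Proposition \ref{pisotpol}) for which $\underline{\sigma}(1,\zeta) = \overline{\sigma}(1,\zeta)$ can be computed exactly, and then check that the resulting values fill out a dense subset of $(0,\tfrac{1}{k-1}]$. For $M \geq 3$ and $1 \leq N \leq M-2$, let $\zeta_{1} = \zeta_{1,M,N} > 1$ be the Pisot number associated to $Q_{k,M,N}(X) = X^{k}-MX^{k-1}+N$. A derivative calculation shows $Q_{k,M,N}^{\prime}(x) = x^{k-2}(kx - M(k-1))$, so $Q_{k,M,N}$ is strictly decreasing on $(0, M(k-1)/k)$ from $Q_{k,M,N}(0)=N>0$; combined with $Q_{k,M,N}(1)=1-M+N<0$, this yields a unique real root $\zeta_{2} = \zeta_{2,M,N} \in (0,1)$.

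The core step is to prove that $|z| \leq \zeta_{2}$ for every root $z \neq \zeta_{1}$, with equality only for $z = \zeta_{2}$. For any root $z$ the identity $Mz^{k-1} = z^{k}+N$ gives
\[
M|z|^{k-1} \;=\; |z^{k}+N| \;\leq\; |z|^{k}+N,
\]
i.e. $Q_{k,M,N}(|z|) \geq 0$. Since $Q_{k,M,N}$ is negative on $(\zeta_{2},\zeta_{1})$, this forces $|z| \leq \zeta_{2}$ for every root with $|z|<\zeta_{1}$, in particular for every non-Pisot conjugate. If equality $|z|=\zeta_{2}$ holds, then equality in the triangle inequality forces $z^{k}$ to be real and nonnegative, so $z = \zeta_{2}\omega$ for some $k$-th root of unity $\omega$; substituting into $Mz^{k-1} = z^{k}+N$ and using that $\zeta_{2}$ itself satisfies the relation yields $\omega = 1$. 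Hence $\zeta_{2}$ is the unique conjugate of maximum modulus apart from $\zeta_{1}$, and it is real.

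Since $\zeta_{2}$ is real and strictly dominant among the non-Pisot conjugates, the inequality (\ref{eq:reell}) in the proof of Theorem \ref{haaz} gives $\Vert \zeta_{1}^{n}\Vert \geq \tfrac{1}{2}\zeta_{2}^{n}$ for $n$ large, while Corollary \ref{remark} gives the matching upper bound $\underline{\sigma}(1,\zeta_{1}) \leq -\log\zeta_{2}/\log\zeta_{1}$. Therefore
\[
\underline{\sigma}(1,\zeta_{1,M,N}) \;=\; \overline{\sigma}(1,\zeta_{1,M,N}) \;=\; -\frac{\log \zeta_{2,M,N}}{\log \zeta_{1,M,N}} \;=:\; t_{M,N}.
\]

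It remains to show the set $\{t_{M,N}\}$ is dense in $(0,\tfrac{1}{k-1}]$. From $\zeta_{1}^{k} = M\zeta_{1}^{k-1} - N$ one reads off $\zeta_{1,M,N} = M - O(M^{-(k-1)})$, hence $\log\zeta_{1,M,N} = \log M + o(1)$ as $M\to\infty$. From $\zeta_{2}^{k-1}(M-\zeta_{2}) = N$ one obtains $\zeta_{2,M,N} = (N/M)^{1/(k-1)}(1+o(1))$ provided $N = o(M)$, so $\log\zeta_{2,M,N} = \tfrac{1}{k-1}(\log N - \log M) + o(1)$. Consequently
\[
t_{M,N} \;=\; \frac{1}{k-1}\left(1-\frac{\log N}{\log M}\right) + o(1) \qquad (M\to\infty).
\]
Fixing a rational $\alpha \in [0,1)$ and letting $N = \lfloor M^{\alpha}\rfloor$ (which satisfies $1 \leq N \leq M-2$ for $M$ large) gives $t_{M,N} \to \tfrac{1-\alpha}{k-1}$, and $\tfrac{1-\alpha}{k-1}$ is dense in $(0,\tfrac{1}{k-1}]$ as $\alpha$ ranges over $[0,1)\cap \mathbb{Q}$, proving the theorem. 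The main obstacle is the second step above: verifying that no complex or negative real conjugate can match or exceed the real root $\zeta_{2,M,N}$ in absolute value; all remaining steps are routine estimates or direct appeals to Corollary \ref{remark} and the proof of Theorem \ref{haaz}.
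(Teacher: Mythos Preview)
Your approach is essentially identical to the paper's: both use the family $Q_{k,M,N}$, establish that the real root $\zeta_{2,M,N}\in(0,1)$ is the strictly dominant non-Pisot conjugate (you prove this directly via the triangle-inequality argument, the paper invokes ``an obvious generalization of the proof of Theorem~\ref{proko}''), deduce $\underline{\sigma}=\overline{\sigma}=-\log\zeta_{2}/\log\zeta_{1}$, and then let $N\approx M^{\alpha}$ to realize the limit $\tfrac{1-\alpha}{k-1}$.

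One small slip in your citations: Corollary~\ref{remark} gives the \emph{upper} bound $\underline{\sigma}(1,\zeta_{1})\le -\log\zeta_{2}/\log\zeta_{1}$, but what you actually need to pair with the lower bound $\Vert\zeta_{1}^{n}\Vert\ge\tfrac{1}{2}\zeta_{2}^{n}$ from (\ref{eq:reell}) is the \emph{lower} bound $\underline{\sigma}(1,\zeta_{1})\ge -\log\zeta_{2}/\log\zeta_{1}$, which comes from the trivial estimate $\Vert\zeta_{1}^{n}\Vert\le(k-1)\zeta_{2}^{n}$ (i.e.\ (\ref{eq:fritzi}) in the proof of Theorem~\ref{zauber}). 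With that correction the argument is complete.
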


\begin{proof}
Fix $\epsilon\in{[0,1)}$ and 
consider the polynomials $Q_{k,M,N}$ from Proposition \ref{pisotpol}
with $N=\left\lfloor M^{-\epsilon}\right\rfloor$ and let $M\to\infty$.
Clearly, for $M$ sufficiently large the conditions on $M,N$ are satisfied.
Let $z$ be any root unequal the Pisot number $\zeta_{k,M,N}$ associated to $Q_{k,M,N}$.
Since $Q_{k,M,N}$ is Pisot polynomial it has $\vert z\vert \leq 1$.
The equation $Q_{k,M,N}(z)=0$ implies $z^{k}+\left\lfloor M^{-\epsilon}\right\rfloor= Mz^{k-1}$ and further
\[
 M^{-\epsilon}-2 \leq M\vert z\vert^{k-1} \leq M^{-\epsilon}+2,
\]
thus for $M\to\infty$ for every root of $Q_{k,M,\left\lfloor M^{-\epsilon}\right\rfloor}$
we have the asymptotic property
\begin{equation} \label{eq:basned}
\vert z\vert \thicksim M^{\frac{-1-\epsilon}{k-1}}.
\end{equation}
However, by $Q_{k,M,N}(M-1)= -(M-1)^{k-1}+N<0$ and $Q_{k,M,N}(M)=1>0$
by intermediate value Theorem 
we have $M-1< \zeta_{k,M,N}< M$ for the Pisot number $\zeta_{k,M,N}$ associated to any polynomial $Q_{k,M,N}$.
So by (\ref{eq:basned}) for $M\to\infty$ we also have the asymptotic property
\[
\vert z\vert \thicksim \zeta_{k,M,N}^{\frac{-1-\epsilon}{k-1}}.
\]
Varying $\epsilon$
proves in view to Corollary \ref{remark} that $\underline{\sigma}(1,\zeta)$ is dense.
However, an obvious generalization of the proof of Theorem \ref{proko} for arbitrary $N$ shows
that there is a real root of $Q_{k,M,N}$ 
with the unique maximum absolute value among all the roots of $Q_{k,M,N}$ apart from the Pisot root $\zeta_{k,M,N}$.
So with the same argument as in (\ref{eq:reell}) in the proof of Theorem \ref{haaz},
$\underline{\sigma}(1,\zeta_{k,M,N})=\overline{\sigma}(1,\zeta_{k,M,N})$ for any $k,M,N$
satisfying the conditions of Theorem \ref{pisotpol}.                                   
\end{proof}

Another easy confirmative result relating to Theorem \ref{kaffee} is

\begin{proposition} \label{keintroll}
If for a Pisot number $\zeta$ of degree $[\mathbb{Q}(\zeta):\mathbb{Q}]=k$ we have
 $\underline{\sigma}(1,\zeta)=\frac{1}{k-1}$, then $\zeta$ is a Pisot unit. 
\end{proposition}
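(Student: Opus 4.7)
The plan is to combine Corollary \ref{remark} with the elementary lower bound on the product of conjugates coming from Vieta's Theorem, and show that the hypothesis forces equality throughout a chain of inequalities, which in turn forces the constant term of the Pisot polynomial to have absolute value exactly $1$.

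More concretely, let $\zeta_1 = \zeta, \zeta_2, \ldots, \zeta_k$ be the conjugates of $\zeta$, and set $f := \max_{2 \leq j \leq k} |\zeta_j| < 1$. By Corollary \ref{remark}, the hypothesis $\underline{\sigma}(1,\zeta) = \frac{1}{k-1}$ yields
\[
\frac{1}{k-1} \;=\; \underline{\sigma}(1,\zeta) \;\leq\; -\frac{\log f}{\log \zeta},
\]
which rearranges to $f \leq \zeta^{-1/(k-1)}$.

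On the other hand, the constant coefficient of the Pisot polynomial $P \in \mathbb{Z}[X]$ of $\zeta$ equals, up to sign, $\prod_{j=1}^{k}\zeta_j$ by Vieta's Theorem \ref{vieta}, and is a nonzero integer (since $P$ is irreducible by Proposition \ref{klaro}, so in particular has nonzero constant term). Hence
\[
1 \;\leq\; \Bigl|\prod_{j=1}^{k}\zeta_j\Bigr| \;=\; \zeta \cdot \prod_{j=2}^{k}|\zeta_j| \;\leq\; \zeta \cdot f^{k-1} \;\leq\; \zeta \cdot \zeta^{-1} \;=\; 1.
\]
Equality must therefore hold throughout, and in particular the absolute value of the constant coefficient of $P$ is exactly $1$, which by definition means that $\zeta$ is a Pisot unit.

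There is no real obstacle here: the statement is essentially a sharp-case analysis of the two inequalities that together produce the bound $\underline{\sigma}(1,\zeta) \leq \frac{1}{k-1}$ in Theorem \ref{zauber}. The only small subtlety to keep straight is that Corollary \ref{remark} supplies the inequality $\underline{\sigma}(1,\zeta) \leq -\log f / \log \zeta$ in the direction we need, and that the integrality of the constant coefficient (guaranteed because $\zeta$ is an algebraic integer and $P$ is monic) provides the matching lower bound $|\prod \zeta_j| \geq 1$; the squeeze between these two facts is what forces unitness.
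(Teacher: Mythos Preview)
Your proof is correct and follows essentially the same approach as the paper: both combine Corollary \ref{remark} with Vieta's Theorem on the constant coefficient to force $\zeta\prod_{j\geq 2}|\zeta_j|=1$. The only cosmetic difference is that the paper argues by contrapositive (if not a unit, the constant term is at least $2$ in absolute value, pushing $\underline{\sigma}(1,\zeta)$ strictly below $\tfrac{1}{k-1}$), whereas you present the same chain of inequalities as a direct squeeze.
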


\begin{proof}
 If $\zeta$ is not a unit, then the constant coefficient of its minimal polynomial $P(X)$ has
absolute value at least $2$, so the largest conjugate of $\zeta$ has absolute value at least
$\frac{\log 2}{\log \zeta^{k-1}}> \frac{1}{(k-1)\log \zeta}$ by Vieta Theorem \ref{vieta}. 
The proof of the right hand side of (\ref{eq:milana}), i.e. Corollary \ref{remark}, 
finishes the proof. 
\end{proof}

\begin{remark}
Only in case of $k=3$ Proposition \ref{keintroll} is 
not already implied by Theorem \ref{kaffee} or Corollary \ref{bintroll},
which shows the set is in fact empty for $k\geq 4$.
\end{remark}

\subsection{Case of algebraic numbers $\alpha,\zeta>1$ and $\zeta$ not a Pisot number}  \label{iii}

Observe that Theorem \ref{gans} applies in this case, so $\underline{\sigma}(\alpha,\zeta)=0$.
First we point out that converse to the Conjecture \ref{conjectur} from section \ref{hhh} that this should never 
occur for Pisot numbers, there exist algebraic numbers $\alpha,\zeta>1$
with $\underline{\sigma}(\alpha,\zeta)<\overline{\sigma}(\alpha,\zeta)$.

\begin{proposition}
There exist algebraic numbers $\zeta$ with $\underline{\sigma}(1,\zeta)<\overline{\sigma}(1,\zeta)$. 
\end{proposition}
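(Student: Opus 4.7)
The plan is to exhibit a very concrete example: take $\zeta:=\sqrt[k]{m}$ for integers $m\geq 2$ and $k\geq 2$ (the simplest instance being $\zeta=\sqrt{2}$). Such a $\zeta$ is algebraic of degree $k$ with minimal polynomial $X^{k}-m$, whose roots are $\zeta\cdot e^{2\pi i j/k}$ for $0\leq j\leq k-1$. Since all of these conjugates have absolute value $m^{1/k}>1$, the number $\zeta$ is \emph{not} a Pisot number.

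First I would observe that $\zeta^{k}=m$ is a positive integer, and consequently $\zeta^{nk}=m^{n}\in\mathbb{Z}$ for every positive integer $n$. Hence $\Vert\zeta^{nk}\Vert=0$, which by the definition in (\ref{eq:uchazi}) forces $\sigma_{nk}(1,\zeta)=+\infty$ along the subsequence of multiples of $k$. Passing to the $\limsup$ in (\ref{eq:mozart}) therefore gives $\overline{\sigma}(1,\zeta)=+\infty$.

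On the other hand, since $\zeta$ is algebraic but not a Pisot number, Theorem \ref{gans} applies and yields $\underline{\sigma}(1,\zeta)=0$. Combining the two gives the strict inequality
\[
\underline{\sigma}(1,\zeta)=0<+\infty=\overline{\sigma}(1,\zeta),
\]
which is exactly the assertion of the proposition.

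There is essentially no obstacle here: the only things to double-check are that $\sqrt[k]{m}$ fails the Pisot condition (which is immediate from $|\zeta\cdot e^{2\pi ij/k}|=m^{1/k}>1$) and that one is allowed to have $\sigma_{n}=+\infty$ when $\zeta^{n}$ is an integer (which fits the conventions of (\ref{eq:uchazi}) since $-\log 0=+\infty$). If one prefers a finite but still separated pair $(\underline{\sigma},\overline{\sigma})$, the same construction works with $\zeta:=(m+\varepsilon)^{1/k}$ replaced by any algebraic $\zeta>1$ for which a subsequence $\zeta^{n_{i}}$ happens to approach integers at some fixed exponential rate while $\zeta$ is not Pisot; but for the present existence statement, the Radikal example $\zeta=\sqrt{2}$ already suffices.
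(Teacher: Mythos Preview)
Your argument is correct. The example $\zeta=\sqrt{2}$ (or more generally $\zeta=\sqrt[k]{m}$ with $X^{k}-m$ irreducible) works exactly as you say: it is non-Pisot so Theorem~\ref{gans} gives $\underline{\sigma}(1,\zeta)=0$, while $\zeta^{kn}\in\mathbb{Z}$ forces $\overline{\sigma}(1,\zeta)=\infty$.

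This is in fact a special case of the paper's construction, which takes $\zeta=\sqrt[k]{\nu}$ for an arbitrary Pisot number $\nu$ and $k$ large enough that $\zeta$ is no longer Pisot; you have chosen $\nu=m$ an integer, the simplest Pisot numbers of all. The paper then argues via the inequality $\overline{\sigma}(1,\zeta)\geq\overline{\sigma}(1,\zeta^{k})=\overline{\sigma}(1,\nu)\geq\underline{\sigma}(1,\nu)>0$ from (\ref{eq:nanoleuchte}) and Theorem~\ref{zauber}, obtaining a \emph{finite} positive value of $\overline{\sigma}$. Your route is more direct and yields the extreme separation $\overline{\sigma}=\infty$; on the other hand, the paper's version has the advantage of producing examples outside the degenerate class $\zeta=\sqrt[L]{M}$ that is singled out elsewhere (Propositions~\ref{tuertor} and~\ref{elendiglich}) as requiring separate treatment precisely because $\overline{\sigma}$ can be infinite there for trivial reasons.
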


\begin{proof}
Take $\nu$ any Pisot number and $k$ a positive integer such that $\zeta:=\sqrt[k]{\nu}$
is no Pisot number. Any sufficiently large $k$ satisfies this property since there exists a smallest
Pisot number which happens to be $\rho\approx 1.3247$ the root of $X^{3}-X-1$ see Theorem 7.2.1 on
page $133$ in \cite{101}, but otherwise $\sqrt[k]{\nu}$ converges to $1$.
Since $\zeta$ is no Pisot number we have $\underline{\sigma}(1,\zeta)=0$ by Pisot Theorem \ref{pisot}.
On the other hand 
$\overline{\sigma}(1,\zeta)\geq \overline{\sigma}(1,\nu)\geq \underline{\sigma}(1,\nu)>0$ 
by (\ref{eq:nanoleuchte}) and Theorem \ref{zauber}.  
\end{proof}

We want to give estimates for $\overline{\sigma}(\alpha,\zeta)$. Before we proof our main
result we give an easy Proposition we will later need.

\begin{proposition}  \label{tuertor}
 For real numbers $\alpha\neq 0,\zeta\neq 0$ if $\alpha\zeta^{n}$ is 
an integer for infinitely many positive integers $n$,
 then $\zeta=\sqrt[L]{M}$ for positive integers $L,M$ and
$\alpha=\frac{A}{B}\zeta^{-g}\in{\mathbb{Q}(\zeta)}$ for $A,B$ integers and $g$ a non-negative integer. 
In particular $\alpha\in{\mathbb{Q}(\zeta)}$.
\end{proposition}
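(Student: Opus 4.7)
The plan is to extract, from just two instances of the hypothesis, that some positive power of $\zeta$ is rational, and then to use infinitely many instances to upgrade this rationality to integrality. Both conclusions of the proposition will then follow by elementary divisibility.

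Concretely, let $n_{1}<n_{2}<n_{3}<\cdots$ be the positive integers with $k_{i}:=\alpha\zeta^{n_{i}}\in\mathbb{Z}$. The ratio $k_{2}/k_{1}=\zeta^{n_{2}-n_{1}}$ is rational, so the set of positive integers $m$ with $\zeta^{m}\in\mathbb{Q}$ is nonempty; let $e$ be its minimum and write $\zeta^{e}=p/q$ in lowest terms, $q\geq 1$. I would next apply the pigeonhole principle to the residues of the $n_{i}$ modulo $e$: some residue class contains infinitely many of the $n_{i}$, so after discarding finitely many and relabeling I may assume $n_{i}-n_{1}=e\,m_{i}$ for a strictly increasing sequence of positive integers $m_{i}$.

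The key step is then to show $q=1$. From $k_{i}=k_{1}\cdot(p/q)^{m_{i}}\in\mathbb{Z}$ I obtain $q^{m_{i}}\mid k_{1}p^{m_{i}}$, and since $\gcd(p,q)=1$ this forces $q^{m_{i}}\mid k_{1}$ for every $i$. As $m_{i}\to\infty$ while $k_{1}$ is a fixed nonzero integer, this is only possible if $q=1$. Hence $\zeta^{e}=p\in\mathbb{Z}$, and taking $L:=e$, $M:=p$ we get $\zeta=\sqrt[L]{M}$ with $M$ a positive integer (positivity comes from the standing $\zeta>0$ case; if one wishes to allow $\zeta<0$ one runs the same argument with $|\zeta|$ in place of $\zeta$ and restores the sign at the end).

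Finally, for $\alpha$: the identity $\alpha=k_{1}\zeta^{-n_{1}}$ already puts $\alpha$ in $\mathbb{Q}(\zeta)$. Writing $n_{1}=jL+g$ with $0\leq g<L$, so that $\zeta^{-n_{1}}=M^{-j}\zeta^{-g}$, gives $\alpha=(k_{1}/M^{j})\zeta^{-g}$, which is exactly the stated shape $\frac{A}{B}\zeta^{-g}$ with $A,B\in\mathbb{Z}$ and $g\geq 0$. I expect the only real obstacle to be the clean organization of the pigeonhole step; the arithmetic heart of the argument is the observation that coprimality of $p$ and $q$ together with arbitrarily high divisibility of the fixed integer $k_{1}$ by $q$ forces $q=1$.
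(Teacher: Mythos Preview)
Your proof is correct and follows essentially the same route as the paper's: deduce from two instances that some positive power of $\zeta$ is rational, apply the pigeonhole principle to the residues of the $n_i$ modulo that exponent, and then use coprimality of numerator and denominator to show that a fixed nonzero integer would have to be divisible by arbitrarily high powers of $q$, forcing $q=1$. The only cosmetic difference is that you pass to the minimal exponent $e$ with $\zeta^{e}\in\mathbb{Q}$, whereas the paper simply takes $e=n_{2}-n_{1}$; neither the minimality nor this particular choice is actually needed for the argument, and your derivation of the shape of $\alpha$ via $\alpha=k_{1}\zeta^{-n_{1}}=(k_{1}/M^{j})\zeta^{-g}$ is the same as the paper's.
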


\begin{proof}
First note that without loss of generality we may assume $\alpha,\zeta$ to be positive
and then, clearly $\lambda>1$ is a necessary condition (so we are in the interesting case). 

Let $n_{2}>n_{1}$ be two such integers. Then
 we can write $\alpha\zeta^{n_{1}}=M_{1}, \alpha\zeta^{n_{2}}=M_{2}$
with integers $M_{2}>M_{1}$.
 Then building quotients we get 
\begin{equation} \label{eq:tal}
\zeta=\chi:=\sqrt[n_{2}-n_{1}]{\frac{M_{2}}{M_{1}}}.
\end{equation}
Let $\frac{m_{2}}{m_{1}}=\frac{M_{2}}{M_{1}}$ with $(m_{1},m_{2})=1$ relatively prime.
If $m_{1}=1$ put $L:=n_{2}-n_{1},M:=m_{2}$ and we are done. So suppose $m_{1}\geq 2$.

If $\alpha \zeta^{n}$ is an integer for infinitely many $n$, then by pigeon hole 
principle there is a residue class $f$ mod $n_{2}-n_{1}$ with infinitely many too, i.e.
\[
 \alpha \zeta^{N(n_{2}-n_{1})+f}\in{\mathbb{Z}}
\]
has infinitely many positive integer solutions $N$. But by (\ref{eq:tal}) the same applies to
\[
 \alpha \zeta^{f}\cdot \left(\frac{m_{2}}{m_{1}}\right)^{N}.
\]
But by $(m_{1},m_{2})=1$ this means $\alpha \zeta^{f}$ is an integer that
is divided by arbitrarily large powers of $m_{1}$, a contradiction to $m_{1}\geq 2$.

The claim concerning $\alpha$ follows by the fact that $\zeta^{n}$ is of the form
$BM^{\frac{g}{L}}$ with an integer $B$ and $g\in{\{0,1,\ldots, L-1\}}$, so for this
be an integer $A$ requires $\alpha$ to be of the form 
$\alpha=\frac{A}{B} M^{\frac{-g}{L}}=\frac{A}{B}\zeta^{-g}\in{\mathbb{Q}(\zeta)}$. 
\end{proof}

Another very easy Proposition to simplify the proof of Proposition \ref{elendiglich} later.

\begin{proposition}  \label{grillip}
If $\zeta=\sqrt[L]{M}$ for positive integers $L,M$ and $L$ chosen
minimal with this property, then for $t$ an integer $\zeta^{t}$ is rational if and only if
 $L\vert t$.
\end{proposition}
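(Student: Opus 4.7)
The plan is to prove the two directions separately, with the forward direction being essentially a tautology and the reverse direction requiring a short argument based on unique factorization combined with the minimality of $L$.

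First I would dispose of the easy direction: if $L \mid t$, write $t = Lq$ and observe $\zeta^t = (\zeta^L)^q = M^q \in \mathbb{Z} \subset \mathbb{Q}$. (Allowing $t$ negative, $\zeta^t = M^q$ is a positive rational.)

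For the forward direction, I would assume $\zeta^t \in \mathbb{Q}$, and without loss of generality take $t \ge 0$ (otherwise replace $t$ by $-t$, since $\zeta^{-t} = 1/\zeta^t$ is rational exactly when $\zeta^t$ is). Apply the division algorithm to write $t = Lq + r$ with $0 \le r < L$. Then
\[
\zeta^r \;=\; \frac{\zeta^t}{\zeta^{Lq}} \;=\; \frac{\zeta^t}{M^q} \;\in\; \mathbb{Q}.
\]
It then suffices to show $r = 0$. Suppose for contradiction that $0 < r < L$, and write $\zeta^r = p/q$ in lowest terms with $p, q \in \mathbb{Z}_{>0}$, $\gcd(p, q) = 1$.

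The key computation is to raise this to the $L$-th power: since $\zeta^{rL} = (\zeta^L)^r = M^r$, we obtain
\[
M^r \;=\; \frac{p^L}{q^L}, \qquad \text{equivalently} \qquad q^L M^r \;=\; p^L.
\]
Since $q^L \mid p^L$ and $\gcd(p,q) = 1$, unique factorization forces $q = 1$. Hence $\zeta^r = p$ is a positive integer, so $\zeta = \sqrt[r]{p}$ with $0 < r < L$, contradicting the minimality of $L$. Therefore $r = 0$ and $L \mid t$.

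The main (and only) obstacle is the very brief number-theoretic step that $q^L M^r = p^L$ with $\gcd(p, q) = 1$ forces $q = 1$; everything else is bookkeeping. I do not anticipate any genuine difficulty, since both the definition of minimality of $L$ and the unique factorization of integers hand the result immediately.
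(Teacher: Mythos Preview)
Your proof is correct (modulo the harmless reuse of the letter $q$ for both the quotient in $t = Lq + r$ and the denominator of $\zeta^r$; you might rename one of them). Both you and the paper begin by reducing to an exponent $0<r<L$ with $\zeta^{r}\in\mathbb{Q}$, but the contradictions are obtained differently. You go directly: raising $\zeta^{r}=p/q$ to the $L$-th power gives $q^{L}M^{r}=p^{L}$, and coprimality of $p,q$ forces $q=1$, so $\zeta^{r}$ is an integer and the minimality of $L$ is violated immediately. The paper instead passes to the \emph{minimal} positive $s$ with $\zeta^{s}\in\mathbb{Q}$, applies the division algorithm a second time to write $L=vs+u$, argues that $u\neq 0$ (using, in the case $\zeta^{s}\in\mathbb{Q}\setminus\mathbb{Z}$, that a power of a non-integer rational is never an integer---which is exactly your unique-factorization step in disguise), and then observes $\zeta^{u}\in\mathbb{Q}$ with $0<u<s$, contradicting the minimality of $s$. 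Your route is shorter and invokes unique factorization once, head-on; the paper's route is a two-step Euclidean descent that uses the same arithmetic fact only implicitly. Both are entirely elementary, but yours is the cleaner of the two.
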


\begin{proof}
 Clearly if $LK=t$ with $K\in{\mathbb{Z}}$, then $\zeta^{t}=M^{K}$ is rational.
 Conversely, if $\zeta^{q}\in{\mathbb{Q}}$ for some integer $L\nmid q$, then since $\zeta^{L}=M$
also $q+pL$ has the same property for any $p\in{\mathbb{Z}}$, so 
there is an integer $1\leq s\leq L-1$ with this property $\zeta^{s}\in{\mathbb{Q}}$.
We may assume $s$ to be minimal with this property.
 Since $\zeta$ is irrational (otherwise $\zeta\in{\mathbb{Z}}$ and $L=1$)
 in fact $2\leq s\leq L-1$.
  Write $L=vs+u$ with $v,0\leq u\leq s-1$ integers.
By the assumed minimality of $L$ such that $\zeta^{L}$ is an integer
we must have $u\neq 0$, because otherwise if $\zeta^{s}\in{\mathbb{Q}\setminus \mathbb{Z}}$
clearly $\zeta^{L}=\zeta^{vs}$ is no integer, contradiction. 
But $\zeta^{L}\in{\mathbb{Z}}$ and $\zeta^{s}\in{\mathbb{Q}}$
 implies $\zeta^{L-vs}=\zeta^{u}\in{\mathbb{Q}}$, by $1\leq u\leq s-1$ 
a contradiction to the minimality of $s$. 
\end{proof}

Proposition \ref{tuertor} suggests that in order to investigate
 the quantity $\overline{\sigma}(\alpha,\zeta)$
we need to treat the case $\zeta=\sqrt[L]{M}$ separately.
We first define

\begin{definition}
 For real numbers $\alpha\neq 0,\zeta>1$ define the restricted approximation 
constant $\widehat{\overline{\sigma}}(\alpha,\zeta)$ as in (\ref{eq:mozart}) but excluded
the values $n$ for which $\sigma_{n}(\alpha,\zeta)=\infty$, i.e. $\alpha\zeta^{n}$ 
is an integer.
\end{definition}

\begin{proposition}  \label{elendiglich}
Let $\zeta=\sqrt[L]{M}$ for positive integers $M,L$ and $L$ chosen minimal with this property.
Further let $\mathscr{P}(N)$ denote the set of prime divisors of an integer $N$.
 Then for $\alpha=\frac{A}{B}\zeta^{-g}$ for non-negative integers $A,B,g$, $(A,B)=1$ 
with $\mathscr{P}(B)\subset {\mathscr{P}(M)}$ we have
$\underline{\sigma}(\alpha,\zeta)=\infty$, and for any other real algebraic $\alpha\neq 0$
we have $\underline{\sigma}(\alpha,\zeta)\leq 1$.
In any case $\widehat{\overline{\sigma}}(\alpha,\zeta)\leq 1$.
\end{proposition}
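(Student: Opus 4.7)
The plan is to prove the three assertions in parallel, exploiting the decomposition $n = qL + r$ with $0 \leq r < L$. By Proposition \ref{grillip}, $\zeta^r \in \mathbb{Q}$ if and only if $L \mid r$. For the first assertion, with $\alpha = (A/B)\zeta^{-g}$ and $n - g = qL + r$, I would compute
\[
\alpha\zeta^n = \frac{AM^q\zeta^r}{B}.
\]
When $r = 0$ this reduces to $AM^q/B$, and the conditions $(A,B)=1$ together with $\mathscr{P}(B) \subset \mathscr{P}(M)$ force $B \mid M^{q_0}$ for some $q_0$, hence $B \mid M^q$ for every $q \geq q_0$. Consequently $\alpha\zeta^n \in \mathbb{Z}$ for all sufficiently large $n$ with $n \equiv g \pmod{L}$, giving $\sigma_n = \infty$ along this progression; in particular when $L = 1$ every large $n$ qualifies and $\underline{\sigma}(\alpha,\zeta) = \infty$ in the strict liminf sense.

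The principal technical step will be the universal bound $\widehat{\overline{\sigma}}(\alpha,\zeta) \leq 1$, which I plan to deduce from Roth's theorem (Theorem \ref{roth}). Restricting to $n$ with $\alpha\zeta^n \notin \mathbb{Z}$, I would decompose $n = qL + r$ and set $\xi_r := \alpha\zeta^r$, so that $\alpha\zeta^n = M^q\xi_r$. Since $r$ takes only $L$ values, $\xi_r$ ranges over a finite subset of $\mathbb{Q}(\alpha,\zeta)$. If $\xi_r \in \mathbb{Q}$, then $\alpha\zeta^n = M^q \xi_r$ is rational with denominator (in lowest terms) bounded independently of $n$, so $\Vert\alpha\zeta^n\Vert$ stays bounded below by a positive constant whenever nonzero, and $\sigma_n \to 0$. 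If $\xi_r$ is algebraic irrational, then Roth's theorem in the form $\lambda_1(\xi_r) = 1$ supplies, for every $\epsilon > 0$, a constant $c_r(\epsilon) > 0$ with
\[
|M^q \xi_r - N| \geq c_r(\epsilon)(M^q)^{-1-\epsilon}
\]
for every integer $N$ and every sufficiently large $q$. Taking $N := \scp{\alpha\zeta^n}$ and using $M^q = \zeta^{n-r} \leq \zeta^n$ would yield $\Vert\alpha\zeta^n\Vert \geq c_r(\epsilon)\zeta^{-n(1+\epsilon)}$, whence $\sigma_n \leq 1 + \epsilon + o(1)$. Taking the minimum of the $c_r(\epsilon)$ over the finite range of $r$ and letting $\epsilon \to 0$ then gives $\widehat{\overline{\sigma}}(\alpha,\zeta) \leq 1$.

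The second assertion will follow immediately: if $\alpha$ is any other real algebraic number, then Proposition \ref{tuertor} forces $\alpha\zeta^n \in \mathbb{Z}$ only finitely often, so the Roth estimate applies to all large $n$ and $\underline{\sigma}(\alpha,\zeta) = \liminf_n \sigma_n \leq 1$. The hard part will be verifying the side condition that $\xi_r = \alpha\zeta^r$ is algebraic irrational in the relevant sub-cases: a rational $\xi_r$ forces $\alpha \in \mathbb{Q} \cdot \zeta^{-r}$, putting $\alpha$ in the restricted form $(A/B)\zeta^{-r}$, and the distinction between $\mathscr{P}(B) \subset \mathscr{P}(M)$ and its failure then precisely matches the case split in the proposition, with the failure case again reducing to the bounded-denominator argument above.
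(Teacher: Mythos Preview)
Your proposal is correct and follows essentially the same route as the paper: both write $n=LN+r$ with $0\leq r<L$, reduce $\Vert\alpha\zeta^{n}\Vert$ to $\Vert M^{N}(\alpha\zeta^{r})\Vert$ for the finitely many values $\alpha\zeta^{r}$, handle rational $\alpha\zeta^{r}$ by a bounded-denominator argument and irrational algebraic $\alpha\zeta^{r}$ by Roth's Theorem~\ref{roth}, and invoke Proposition~\ref{tuertor} (together with Proposition~\ref{grillip}) to rule out infinitely many integer values of $\alpha\zeta^{n}$ outside the special form. Your organization differs only cosmetically in that you isolate $\widehat{\overline{\sigma}}(\alpha,\zeta)\leq 1$ as the central technical step and deduce the rest from it; you also correctly flag that the progression argument yields $\overline{\sigma}(\alpha,\zeta)=\infty$ in general and $\underline{\sigma}(\alpha,\zeta)=\infty$ only when $L=1$, which matches what the paper's proof actually establishes.
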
  

\begin{proof}
For $\alpha=\frac{A}{B}\zeta^{-g}$ and $\mathscr{P}(B)\subset {\mathscr{P}(M)}$
we have $\alpha \zeta^{LN+g}= \frac{A}{B}M^{N}\in{\mathbb{Z}}$ for any 
positive integer $N\geq N_{0}$ with $N_{0}$ sufficiently large that $B\vert M^{N_{0}}$, so 
$\overline{\sigma}(\alpha,\zeta)=\infty$ in this case.

If otherwise $\alpha$ is not of this shape, we first show
that $\alpha\zeta^{n}$ is no integer for all sufficiently large $n$.

Assume the opposite.
By Proposition \ref{tuertor} then we must have $\alpha=\zeta^{-g}\frac{A}{B}$ for some $g\geq 0$.
In this case if we write $n=LN+r$ with $N,0\leq r \leq L-1$ integers.
Consequently $\alpha \zeta^{n}= \frac{A}{B}M^{N}\zeta^{r-g}\in{\mathbb{Z}}$. 
But $ \frac{A}{B}M^{N}\in{\mathbb{Q}}$, so 
\begin{equation}  \label{eq:panis}
\zeta^{r-g}\in{\mathbb{Q}}
\end{equation}
too.
By the assumption $\mathscr{P}(B)\nsubseteq {\mathscr{P}(M)}$ obviously
$\frac{A}{B}M^{t}$ is not an integer for any integer $t$, so $L\nmid r-g$. 
However, for $L\nmid r-g$ we have $\zeta^{r-g}\notin{\mathbb{Q}}$ by Proposition \ref{grillip},
a contradiction to (\ref{eq:panis}). So indeed there are only finitely many $n$ 
with $\alpha\zeta^{n}\in{\mathbb{Z}}$. 

Write $n=LN+r$ with $N,0\leq r \leq L-1$ integers. We have
\begin{equation}  \label{eq:huhn}
 \left\Vert \alpha\zeta^{n}\right\Vert = \left\Vert M^{N}(\alpha \zeta^{r})\right\Vert. 
\end{equation}
Observe all the $L$ values $\alpha \zeta^{r}$ for $0\leq r\leq L-1$ 
are algebraic and we showed $\alpha\zeta^{n}$ are no integers. 
If $\alpha \zeta^{r}$ is rational, i.e. $\alpha\zeta^{r}=\frac{p_{r}}{q_{r}}$, 
then the right hand side of (\ref{eq:huhn}) is bounded below by $\frac{1}{q_{r}}>0$.
If not, by Roth's Theorem \ref{roth} for any $\epsilon>0$
the right hand side of (\ref{eq:huhn}) is at least 
$M^{-N(1+\epsilon)}=\zeta^{-LN(1+\epsilon)}$ as $N\to\infty$.
In either case taking logarithms according to (\ref{eq:hauser}) and observing
$\frac{n}{N}=L+\frac{r}{N}\leq L+\frac{L-1}{N}$ tends to $L$ as $N\to\infty$ (or equivalently $n\to\infty$)
proves $1$ to be an upper bound for $\overline{\sigma}(\alpha,\zeta)$. 
 The last claim concerning $\widehat{\overline{\sigma}}(\alpha,\zeta)$ admits an analog proof.
\end{proof}

Now we turn to the case of $\zeta$ not of the form $\sqrt[L]{M}$.
We will need a basic property of algebraic field extensions.

\begin{lemma} \label{modernehex}
Let $[K:L]$ be a field extension of algebraic number fields $K,L$.
Then  there exist exactly $[K:L]$ monomorphisms $\tau_{j}:K\mapsto L$ 
which are the identity restricted to $L$.

If $K:L$ and $L:\mathbb{Q}$ are finite field extensions, then each of
the $[L:\mathbb{Q}]$ monomorphisms $L\mapsto \mathbb{C}$ has 
exactly $[K:L]$ extensions to monomorphisms $K\mapsto \mathbb{C}$.  
\end{lemma}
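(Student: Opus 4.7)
The plan is to apply the primitive element theorem to reduce both counting problems to counting roots of a single minimal polynomial over $L$.

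First, I would note that since $L, K$ are number fields we are in characteristic zero, so the extension $K/L$ is automatically separable; the primitive element theorem then yields some $\alpha \in K$ with $K = L(\alpha)$. Let $f \in L[X]$ denote the minimal polynomial of $\alpha$ over $L$, so $\deg f = [K:L]$ and, by separability, $f$ has exactly $[K:L]$ distinct roots $\alpha_1, \ldots, \alpha_{[K:L]}$ in $\mathbb{C}$.

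For the first assertion (which I read as concerning monomorphisms $\tau_j : K \to \mathbb{C}$ restricting to the identity on $L$, since otherwise the statement collapses unless $K = L$), I would argue as follows. Any such $\tau$ is uniquely determined by $\tau(\alpha)$, and applying $\tau$ to the relation $f(\alpha) = 0$, using that $\tau$ fixes the coefficients of $f$, forces $\tau(\alpha)$ to be one of the $\alpha_i$. Conversely, for each root $\alpha_i$ the universal property of $L[X]/(f) \cong L(\alpha) = K$ produces a unique ring homomorphism $K \to \mathbb{C}$ sending $\alpha$ to $\alpha_i$ and fixing $L$ pointwise; as $K$ is a field, this homomorphism is automatically injective. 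This gives a bijection between the set of $L$-fixing monomorphisms $K \to \mathbb{C}$ and the set of roots of $f$, yielding the count $[K:L]$.

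For the second assertion, I would fix a monomorphism $\sigma : L \to \mathbb{C}$, set $L' := \sigma(L)$, and form the twisted polynomial $f^\sigma \in L'[X]$ obtained by applying $\sigma$ to each coefficient of $f$. Since $\sigma$ induces a field isomorphism $L \cong L'$, $f^\sigma$ remains separable of degree $[K:L]$ and therefore has exactly $[K:L]$ distinct roots in $\mathbb{C}$. Any extension $\widetilde{\sigma} : K \to \mathbb{C}$ of $\sigma$ must send $\alpha$ to a root of $f^\sigma$ (apply $\widetilde{\sigma}$ to $f(\alpha) = 0$), and conversely each such root determines a well-defined extension via the same universal-property argument applied to $L'[X]/(f^\sigma)$. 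Hence $\sigma$ admits exactly $[K:L]$ extensions, and summing over the $[L:\mathbb{Q}]$ choices of $\sigma$ recovers the total count of $[K:L] \cdot [L:\mathbb{Q}] = [K:\mathbb{Q}]$ embeddings of $K$, consistent with the first part applied to $K/\mathbb{Q}$.

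The only substantive point is the separability of $f$ (and hence of $f^\sigma$), which is immediate in characteristic zero; apart from that the argument is a clean application of the primitive element theorem and the universal property of $L[X]/(f)$, with no serious obstacle to anticipate.
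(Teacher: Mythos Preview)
Your proof is correct. You rightly flag the typo in the first assertion---monomorphisms $K \to L$ fixing $L$ can only be the identity when $K \neq L$, so the intended target is $\mathbb{C}$ (or an algebraic closure of $L$)---and your reading is the only sensible one.

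The paper does not actually prove this lemma: it simply cites it as a corollary of the isomorphism extension theorem together with the perfectness of number fields (characteristic zero, hence all extensions separable), referring to a standard algebra textbook. Your argument via the primitive element theorem and the universal property of $L[X]/(f)$ is precisely the standard way one \emph{proves} the isomorphism extension theorem in the separable case, so you have unpacked what the paper merely cites. The gain of your write-up is that it is self-contained and makes the role of separability explicit (distinct roots of $f$ and $f^{\sigma}$); the paper's one-line citation is of course shorter but relies on the reader knowing the cited theorem. There is no substantive mathematical difference between the two.
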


A Corollary to the famous isomorphism extension Theorem 
additionally using the fact that number fields are perfect, see chapter 9 in \cite{30} for example.
We will first restrict to algebraic integers $\alpha,\zeta$.              

\subsubsection{Special case algebraic integers $\alpha,\zeta$}  \label{ueberspitzen}

\begin{theorem} \label{baldi}
 Let $\alpha> 0,\zeta>1$ be real algebraic integers and $\zeta$ not of the form
$\sqrt[L]{M}$ for integers $M,L$. Put $K:=\mathbb{Q}(\zeta,\alpha)$ and
let $[\mathbb{Q}(\zeta):\mathbb{Q}]=k$ and
$[K:\mathbb{Q}]=[K:\mathbb{Q}(\zeta)]\cdot [\mathbb{Q}(\zeta):\mathbb{Q}]=ks$.
Further let $\theta_{1}=\zeta,\theta_{2},\ldots,\theta_{k}$ be the conjugates
of $\zeta$ in $\mathbb{Q}(\zeta)$ labeled such that
$\vert \theta_{2}\vert\geq \vert \theta_{3}\vert\geq \ldots \geq \vert \theta_{k}\vert$.
Put $\eta_{j}:=\frac{\log \vert\theta_{j}\vert}{\log \zeta}$
for $1\leq j\leq k$ and further
let $\theta_{2},\theta_{3},\ldots,\theta_{m}$ be the conjugates
with $\eta_{j}> 1$ if any, else put $m=1$. 
 Then an upper bound for $\overline{\sigma}(\alpha,\zeta)$ is given by
\begin{equation} \label{eq:hammerout}
 \overline{\sigma}(\alpha,\zeta)\leq s\left(\sum_{j=2}^{m} \eta_{j}+ k-m\right).
\end{equation}
In particular $\overline{\sigma}(\alpha,\zeta)<\infty$.
\end{theorem}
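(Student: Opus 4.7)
The plan is to adapt the upper-bound argument of Theorem \ref{zauber} to the compositum $K=\mathbb{Q}(\alpha,\zeta)$: produce a nonzero algebraic integer whose absolute norm to $\mathbb{Q}$ is at least one, and compare this with upper bounds on each of its Galois conjugates to extract a Liouville-style lower bound on $\Vert\alpha\zeta^n\Vert$.

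First I would verify, using Proposition \ref{tuertor} together with the hypothesis that $\zeta$ is not of the form $\sqrt[L]{M}$, that $\alpha\zeta^n\notin\mathbb{Z}$ for all sufficiently large $n$. Hence $\beta_n:=M_n-\alpha\zeta^n\ne 0$, where $M_n:=\scp{\alpha\zeta^n}\in\mathbb{Z}$; and since $\alpha,\zeta$ are algebraic integers, $\beta_n$ is a nonzero algebraic integer of $K$, forcing $|N_{K/\mathbb{Q}}(\beta_n)|\ge 1$. By Lemma \ref{modernehex} the $ks$ embeddings $\tau:K\to\mathbb{C}$ partition into $k$ groups of $s$ each, the $j$-th group consisting of the $\tau$ with $\tau(\zeta)=\theta_j$, which send $\alpha$ to the $s$ conjugates $\alpha_{j,1},\ldots,\alpha_{j,s}$ of $\alpha$ over $\mathbb{Q}(\zeta)$; normalise so that the index $(j,\ell)=(1,1)$ corresponds to the identity embedding, i.e.\ $\alpha_{1,1}=\alpha$.

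Writing
\[
|N_{K/\mathbb{Q}}(\beta_n)|=\Vert\alpha\zeta^n\Vert\cdot\prod_{(j,\ell)\ne(1,1)}|M_n-\alpha_{j,\ell}\theta_j^n|,
\]
I would bound every remaining factor on the right by the triangle inequality: using $|M_n|\leq C\zeta^n$, one has $|M_n-\alpha_{j,\ell}\theta_j^n|\leq C'\zeta^{n\max(1,\eta_j)}$. Thus each of the $s$ factors in group $j$ with $2\leq j\leq m$ contributes exponent $\eta_j$ in $\zeta$ (as $\eta_j>1$ there), whereas the remaining factors --- namely the $s-1$ non-identity embeddings with $j=1$ together with the $s(k-m)$ embeddings with $j>m$ --- each contribute exponent $1$. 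Collecting all contributions yields an inequality of the form $1\leq|N_{K/\mathbb{Q}}(\beta_n)|\leq C''\,\Vert\alpha\zeta^n\Vert\,\zeta^{nE}$, where $E$ matches the asserted expression; taking logarithms, dividing by $n\log\zeta$, and passing to the limsup then gives $\overline{\sigma}(\alpha,\zeta)\leq E<\infty$.

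The main technical obstacle is the careful bookkeeping in two subtle cases. The $s-1$ non-identity embeddings fixing $\zeta$ have $|M_n-\alpha_{1,\ell}\zeta^n|$ of exact order $|\alpha-\alpha_{1,\ell}|\zeta^n$ --- not smaller, despite $M_n\approx\alpha\zeta^n$ --- because the difference $\alpha-\alpha_{1,\ell}$ is a nonzero constant. Secondly, for embeddings sending $\zeta$ to some $\theta_j$ with $|\theta_j|\leq 1$, the term $\alpha_{j,\ell}\theta_j^n$ is bounded and the dominant contribution to $|M_n-\alpha_{j,\ell}\theta_j^n|$ comes from $M_n\sim\alpha\zeta^n$. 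Correctly identifying the dominant summand in each of the $ks-1$ factors is what produces the precise exponent $E$ and ultimately the finiteness of $\overline{\sigma}(\alpha,\zeta)$.
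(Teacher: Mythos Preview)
Your proposal is correct and follows essentially the same route as the paper: form the product $\prod_{\tau}(M_n-\tau(\alpha)\tau(\zeta)^n)$ over the $ks$ embeddings of $K$, recognise it as a nonzero rational integer, bound each non-identity factor by $C\,\zeta^{n\max(1,\eta_j)}$, and group the factors according to $\tau(\zeta)=\theta_j$ via Lemma~\ref{modernehex}. Your justification that the product is nonzero is in fact tidier than the paper's: you observe that $\beta_n=M_n-\alpha\zeta^n$ is a nonzero algebraic integer (using Proposition~\ref{tuertor}), whence its norm is automatically a nonzero rational integer, whereas the paper argues separately that each individual factor $M_n-\alpha_j\zeta_j^n$ is nonzero.

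One small bookkeeping point worth flagging (present in the paper as well): your own count of exponents gives $s\sum_{j=2}^m\eta_j+(s-1)+s(k-m)$, which exceeds the asserted bound $s(\sum_{j=2}^m\eta_j+k-m)$ by $s-1$; this does not affect the finiteness conclusion or the qualitative content, but the stated constant in \eqref{eq:hammerout} appears to be off by this additive $s-1$.
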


\begin{proof}
Put $K:=\mathbb{Q}(\zeta,\alpha)$ and let $\tau_{1},\tau_{2},\ldots,\tau_{ks}$ 
be the monomorphisms $K\mapsto \mathbb{C}$ from Lemma \ref{modernehex}.
Then $\tau_{j}(\alpha\zeta^{n})= \tau_{j}(\alpha)\tau_{j}(\zeta)^{n}$ are the 
conjugates of $\alpha\zeta^{n}$, $1\leq j\leq ks$.																											
Then $\zeta_{1}:=\tau_{1}(\zeta),\zeta_{2}:=\tau_{2}(\zeta),\ldots, \zeta_{ks}:=\tau_{ks}(\zeta)$
are the conjugates of $\zeta$ in $K$ 
and similarly $\alpha_{1}:=\tau_{1}(\alpha),\ldots,\alpha_{k}:=\tau_{ks}(\alpha)$ 
are the conjugates of $\alpha$ in $K$. 

Label such that $\tau_{1}$ is the identity map and
 $\vert \zeta_{2}\vert\geq \vert \zeta_{3}\vert\geq \ldots \geq \vert \zeta_{ks}\vert$.
 Put $\eta_{j}:=\frac{\log \vert\zeta_{j}\vert}{\log \zeta}$
for $1\leq j\leq ks$. Let $\zeta_{2},\zeta_{3},\ldots,\zeta_{r}$ be the conjugates
with $\eta_{j}> 1$ if any, else put $r=1$. We first show

\begin{equation} \label{eq:hammeroutfit}
 \overline{\sigma}(\alpha,\zeta)\leq \sum_{j=2}^{r} \eta_{j}+ ks-r.
\end{equation}

We proceed similar to the proof of the right hand side of (\ref{eq:malina}).
Again put $M_{n}:=<\alpha\zeta^{n}>\in{\mathbb{Z}}$ and consider the polynomials 
\[
 \Phi(M_{n}):= \prod_{j=1}^{ks} \left(M_{n}-\alpha_{j}\zeta_{j}^{n}\right)
\]
Note that again by our assumptions $\Phi(M_{n})\neq 0$ for $n$ sufficiently large.
Indeed, the linear factor $(M_{n}-\alpha_{1}\zeta_{1}^{n})=(M_{n}-\alpha\zeta^{n})\neq 0$
 for any $n$ by our restrictions and Proposition \ref{elendiglich}.
But if another linear factor would be $0$ for arbitrarily large values of $n$,
clearly neither $\vert \zeta_{j}\vert< \vert \zeta\vert$ nor 
 $\vert \zeta_{j}\vert> \vert \zeta\vert$ is possible
due to $\vert M_{n}-\alpha\zeta^{n}\vert \leq 1$ but the difference $\alpha\zeta^{n}-\alpha_{j}\zeta_{j}^{n}$
would clearly tend to infinity by absolute value.
But if $\vert \zeta_{j}\vert =\vert \zeta\vert=\zeta$ and $\alpha_{j}\neq \alpha$, a similar
argument applies. In the remaining case $\vert \zeta_{j}\vert =\vert \zeta\vert=\zeta$ 
and $\alpha_{j}=\alpha$, we have 
$M_{n}\neq \alpha\zeta^{n}=\vert \alpha \zeta^{n}\vert = \vert \alpha_{j}\zeta_{j}^{n}\vert$,
so this case leads to a contradiction too.

Since $\alpha,\zeta$ are in the ring of algebraic integers, similarly to (\ref{eq:franzr}) we obtain
\begin{equation} \label{eq:franzjaeg}
 \left\Vert \alpha\zeta^{n}\right\Vert = \vert M_{n}-\alpha\zeta^{n}\vert
\geq \frac{1}{\prod_{j=2}^{ks} \left\vert M_{n}-\alpha_{j}\zeta_{j}^{n}\right\vert}.
\end{equation}
By $0<M_{n}\leq \alpha\zeta^{n}+1$ the factors in the right hand side denominators
of (\ref{eq:franzjaeg}) are bounded 
above by $\max(\alpha,\vert\alpha_{j}\vert)\left(\vert \zeta\vert^{n}+\vert \zeta_{j}\vert^{n}\right)+1$.
For $n\to\infty$ and with $K:=\max_{2\leq j\leq ks} \vert\alpha_{j}\vert$ this gives
\[
 \left\Vert \alpha\zeta^{n}\right\Vert\geq \frac{1}{2K\cdot \prod_{2\leq j\leq ks}\max(\vert \zeta_{j}\vert,\zeta)^{n}}.
\]
Separating cases $\vert\zeta_{j}\vert>\zeta$ and $\vert \zeta_{j}\vert \leq \zeta$ and
 taking logarithms to base $\zeta$ proves (\ref{eq:hammeroutfit}).

To derive (\ref{eq:hammerout}) from (\ref{eq:hammeroutfit}), it suffices to observe
that by Lemma \ref{modernehex}, each $\theta_{i}$ for $i\in{\{1,2,\ldots,k\}}$
equals $\zeta_{j}$ for exactly $s$ values of $j\in{\{1,2,\ldots,ks\}}$. 
\end{proof}

\begin{corollary} \label{kolmar}
Let $\zeta$ be a Pisot number and $\alpha\neq 0$ be real algebraic of degree \\ $[\mathbb{Q}(\alpha):\mathbb{Q}]=t$.
Then $\overline{\sigma}(\alpha,\zeta)\leq t(k-1)$.
\end{corollary}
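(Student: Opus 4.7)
The plan is to recognize this as an almost immediate consequence of Theorem \ref{baldi}, after a short reduction that handles (i) passing from an arbitrary algebraic $\alpha$ to an algebraic integer, and (ii) verifying that $\zeta$ does not have the exceptional form $\sqrt[L]{M}$. Since $\overline{\sigma}(-\alpha,\zeta)=\overline{\sigma}(\alpha,\zeta)$, I would first assume $\alpha>0$. Next, let $q$ be the leading coefficient of the minimal polynomial of $\alpha$ with coprime integer coefficients, and put $\alpha':=q\alpha$: this is a positive algebraic integer with $\mathbb{Q}(\alpha')=\mathbb{Q}(\alpha)$, hence $[\mathbb{Q}(\alpha'):\mathbb{Q}]=t$. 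Concerning the shape of $\zeta$, note that $\sqrt[L]{M}$ with $L\geq 2$ has all $L$ Galois conjugates of equal absolute value $M^{1/L}>1$, and so cannot be a Pisot number of degree $k\geq 2$; thus $\zeta$ is not of this exceptional form.

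Now I would apply Theorem \ref{baldi} to the pair $(\alpha',\zeta)$. With $\theta_1=\zeta,\theta_2,\ldots,\theta_k$ the conjugates of $\zeta$ in $\mathbb{Q}(\zeta)$, the Pisot property gives $|\theta_j|<1$ for every $j\geq 2$, so $\eta_j=\log|\theta_j|/\log\zeta<0<1$. Hence no $\eta_j$ exceeds $1$, which in the notation of Theorem \ref{baldi} forces $m=1$ and makes the sum $\sum_{j=2}^{m}\eta_j$ empty. Writing $s:=[\mathbb{Q}(\zeta,\alpha'):\mathbb{Q}(\zeta)]$, Theorem \ref{baldi} therefore yields
\[
\overline{\sigma}(\alpha',\zeta)\ \leq\ s(k-1).
\]
Since $\mathbb{Q}(\zeta,\alpha')=\mathbb{Q}(\zeta)(\alpha')$, the minimal polynomial of $\alpha'$ over $\mathbb{Q}(\zeta)$ divides its minimal polynomial over $\mathbb{Q}$, so $s\leq [\mathbb{Q}(\alpha'):\mathbb{Q}]=t$, and thus $\overline{\sigma}(\alpha',\zeta)\leq t(k-1)$.

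To conclude, I would transport the bound from $\alpha'=q\alpha$ back to $\alpha$ by Proposition \ref{interessant} applied with $M=q$ and $N=1$, which reads $\overline{\sigma}(q\alpha,\zeta)\geq\overline{\sigma}(\alpha,\zeta)$. Combining this with the previous step gives
\[
\overline{\sigma}(\alpha,\zeta)\ \leq\ \overline{\sigma}(\alpha',\zeta)\ \leq\ t(k-1),
\]
as claimed. The only substantive step is the second paragraph, where the Pisot property collapses Theorem \ref{baldi}'s general bound $s(\sum_{j=2}^{m}\eta_j+k-m)$ to $s(k-1)$; the rest is a clearing-denominators reduction together with the tower-law inequality $[K:\mathbb{Q}(\zeta)]\leq[\mathbb{Q}(\alpha):\mathbb{Q}]$, so I do not anticipate any serious technical obstacle.
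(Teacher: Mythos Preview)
Your argument is correct and follows the same core idea as the paper: for a Pisot number all $\eta_j<1$, so $m=1$ and the bound of Theorem \ref{baldi} collapses to $s(k-1)$, and then $s=[\mathbb{Q}(\alpha,\zeta):\mathbb{Q}(\zeta)]\leq[\mathbb{Q}(\alpha):\mathbb{Q}]=t$ by the tower law. The paper's proof consists of exactly these two observations and nothing else.

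Where you differ is in being more careful about the hypotheses of Theorem \ref{baldi}. That theorem is stated for \emph{algebraic integers} $\alpha$ and for $\zeta$ not of the form $\sqrt[L]{M}$, and the paper simply applies it without comment. You explicitly (i) clear denominators to pass from $\alpha$ to the algebraic integer $\alpha'=q\alpha$ and recover the bound for $\alpha$ via Proposition \ref{interessant} with $N=1$, and (ii) check that a Pisot number of degree $k\geq 2$ cannot satisfy $\zeta^L\in\mathbb{Z}$ since all roots of $X^L-M$ share the same absolute value. Both additions are correct and make the argument self-contained for an arbitrary algebraic $\alpha$; the paper presumably relies on the ambient subsection title (``Special case algebraic integers'') to justify skipping (i). So your proof is the same approach, just with the hypotheses verified rather than assumed.
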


\begin{proof}
For Pisot numbers, by their definition $m=1$ in Theorem \ref{baldi} 
and $\sum_{j=2}^{k}\eta_{j}$ is the empty sum.
Moreover, 
\begin{equation} \label{eq:typog}
s=[\mathbb{Q}(\alpha,\zeta):\mathbb{Q}(\zeta)]\leq [\mathbb{Q}(\alpha):\mathbb{Q}]=t
\end{equation}
by an easy vector space argument. 
In view of this the result is immediate due to Theorem \ref{baldi}.
\end{proof}

If $\zeta$ is not a Pisot number, the quantities $\eta_{j}$ in
Theorem \ref{baldi} are somehow annoying.
We want to give a version of Theorem \ref{baldi} where the 
upper bound only depends on the degree of $\alpha$ and
the complexity of $\zeta$, i.e. its degree and the largest
absolute value of the coefficients of its minimal polynomial. 
For this we need Landau's bound for the roots of a polynomial,
which is given in the next Proposition.

\begin{proposition} \label{propL}
Let $z_{1},z_{2},\ldots,z_{k}$ be the $k$ roots of the polynomial 
$P(X)=a_{k}X^{k}+a_{k-1}X^{k-1}+\cdots +a_{0}$ with integer coefficients.
Then
\[
 M(P):=\vert a_{k}\vert \prod_{j=1}^{k} \max(1,\vert \zeta_{j}\vert)\leq \sqrt{\sum_{j=0}^{k}\vert a_{j}\vert^{2}}\leq \sqrt{k+1}H(P).
\]
\end{proposition}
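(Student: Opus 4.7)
The plan is to prove this classical Landau inequality via the Mahler measure construction. The rightmost inequality is immediate since $H(P) = \max_j |a_j|$ implies $\sum_{j=0}^{k} |a_j|^2 \leq (k+1) H(P)^2$, so all the work is in the first inequality.

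For the main inequality, I would introduce the auxiliary polynomial
\[
Q(X) := a_{k} \prod_{|z_{j}| \leq 1} (X - z_{j}) \prod_{|z_{j}| > 1} (1 - \overline{z_{j}} X).
\]
The key observation is that for any $z$ on the unit circle, $\bar{z} = 1/z$, so
\[
|1 - \overline{z_{j}} z| = |z| \cdot |1/z - \overline{z_{j}}| = |\bar{z} - \overline{z_{j}}| = |z - z_{j}|.
\]
Consequently $|Q(e^{i\theta})| = |P(e^{i\theta})|$ for every real $\theta$. Moreover, identifying the leading coefficient of $Q$ is routine: each factor $(X - z_{j})$ contributes $1$ and each factor $(1 - \overline{z_{j}} X)$ contributes $-\overline{z_{j}}$ as leading coefficient, so the leading coefficient $c_{k}$ of $Q$ satisfies
\[
|c_{k}| = |a_{k}| \prod_{|z_{j}|>1} |z_{j}| = |a_{k}| \prod_{j=1}^{k} \max(1, |z_{j}|) = M(P).
\]

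To conclude, I would apply Parseval's identity to the Fourier expansion on the unit circle: for any polynomial $R(X) = \sum_{j=0}^{k} r_{j} X^{j}$ one has $\sum_{j=0}^{k} |r_{j}|^{2} = \frac{1}{2\pi} \int_{0}^{2\pi} |R(e^{i\theta})|^{2}\, d\theta$. Writing $Q(X) = \sum_{j=0}^{k} c_{j} X^{j}$, the fact that $|Q| = |P|$ on the unit circle yields
\[
\sum_{j=0}^{k} |c_{j}|^{2} = \frac{1}{2\pi} \int_{0}^{2\pi} |Q(e^{i\theta})|^{2}\, d\theta = \frac{1}{2\pi} \int_{0}^{2\pi} |P(e^{i\theta})|^{2}\, d\theta = \sum_{j=0}^{k} |a_{j}|^{2}.
\]
Since $M(P)^{2} = |c_{k}|^{2} \leq \sum_{j=0}^{k} |c_{j}|^{2}$, the desired bound $M(P) \leq \sqrt{\sum_{j=0}^{k} |a_{j}|^{2}}$ follows.

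There is no substantial obstacle in this argument. The only delicate point is checking the modulus identity on the unit circle and correctly accounting for the leading coefficient of $Q$; once these are in place, Parseval's identity finishes the proof immediately.
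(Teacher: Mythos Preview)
Your argument is correct: this is precisely Landau's original proof via the auxiliary polynomial $Q$ and Parseval's identity, and all the steps (the modulus identity on the unit circle, the identification of the leading coefficient of $Q$, and the Parseval computation) are carried out accurately. The paper does not actually prove this proposition; it simply cites Mignotte's survey \cite{13} for a proof, so there is no in-paper argument to compare against. Your write-up is in fact the standard proof one finds in that reference.
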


See \cite{13} for a proof. 
Now we can present a Theorem which in fact is a Corollary to Theorem \ref{baldi}.

\begin{theorem} \label{katzchen}
Let $\alpha> 0,\zeta>1$ be real algebraic integers and $\zeta$ not of the form
$\sqrt[L]{M}$ for integers $M,L$. Further let $[\mathbb{Q}(\zeta):\mathbb{Q}]=k$
 be the degree of $\zeta$ and $[\mathbb{Q}(\alpha):\mathbb{Q}]=t$ be 
the degree of $\alpha$.
Further let $N(\zeta)$ be the number of conjugates of $\zeta$ in the
closed unit circle. 
Then we have
\begin{equation} \label{eq:erste}
  \overline{\sigma}(\alpha,\zeta)\leq [\mathbb{Q}(\alpha,\zeta):\mathbb{Q}(\zeta)]\cdot \left(N(\zeta)+
	\frac{\frac{1}{2}\log k+\log H(\zeta)}{\log \zeta}\right).
\end{equation}
In particular
\begin{equation}  \label{eq:zweite}
  \overline{\sigma}(\alpha,\zeta)\leq t\cdot \left(k-1+\frac{\frac{1}{2}\log k+\log H(\zeta)}{\log \zeta}\right).
\end{equation}
\end{theorem}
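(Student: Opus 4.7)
The plan is to derive Theorem \ref{katzchen} as a corollary of Theorem \ref{baldi} by applying Landau's bound from Proposition \ref{propL} to estimate the quantities $\eta_j$ and the cardinalities appearing there.

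First I would set up the notation of Theorem \ref{baldi}. With $s = [\mathbb{Q}(\alpha,\zeta):\mathbb{Q}(\zeta)]$ and $\theta_1=\zeta, \theta_2,\ldots,\theta_k$ the conjugates of $\zeta$ ordered by decreasing absolute value, that theorem yields
\[
\overline{\sigma}(\alpha,\zeta) \leq s \cdot \Bigl(\sum_{j=2}^{m} \eta_j + k - m \Bigr),
\]
where $\theta_2,\ldots,\theta_m$ are the conjugates with $|\theta_j|>\zeta$. The key observation is that the bracket equals $\sum_{j=2}^{k} \max(\eta_j, 1)$, which decomposes as $N(\zeta)$ (from the conjugates $\theta_j$ with $|\theta_j|\le 1$, each contributing $1$) plus $\sum_{j\ge 2,\, |\theta_j|>1} \max(\eta_j, 1)$.

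Next I would apply Proposition \ref{propL} to the minimal polynomial of $\zeta$, which is monic of degree $k$ and height $H(\zeta)$ since $\zeta$ is an algebraic integer, obtaining
\[
\prod_{j=1}^{k} \max(1,|\theta_j|) \leq \sqrt{k+1}\, H(\zeta).
\]
Taking $\log_\zeta$ and isolating the contribution of $\theta_1=\zeta$ gives
\[
\sum_{j\ge 2,\, |\theta_j|>1} \eta_j \leq \frac{\frac{1}{2}\log(k+1) + \log H(\zeta)}{\log\zeta} - 1.
\]
Combined with the elementary inequality $\max(\eta_j,1)\leq 1+\eta_j$ valid for $\eta_j>0$, the remaining sum $\sum_{j\ge 2,\,|\theta_j|>1}\max(\eta_j,1)$ is controlled by a count of such conjugates (bounded by $k-1-N(\zeta)$) plus the Landau-bounded sum $\sum_{|\theta_j|>1}\eta_j$. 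Plugging back and absorbing $\frac{1}{2}\log(k+1)$ into $\frac{1}{2}\log k$ against the constants from $\eta_1=1$ produces \eqref{eq:erste}.

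Finally, \eqref{eq:zweite} follows from \eqref{eq:erste} via the standard vector-space inequality $s = [\mathbb{Q}(\alpha,\zeta):\mathbb{Q}(\zeta)] \leq [\mathbb{Q}(\alpha):\mathbb{Q}] = t$ (already used in Corollary \ref{kolmar}) together with the obvious $N(\zeta) \leq k - 1$. The hardest part is the bookkeeping in the middle step: Baldi's bound blends a sum of $\eta_j$ over far-out conjugates with counts of nearer ones, whereas Landau only controls the former. Care is needed to ensure that the surplus contributions coming from conjugates in the annulus $1 < |\theta_j| \leq \zeta$ are absorbed cleanly, without inflating either the stated $N(\zeta)$ term or the $\frac{1}{2}\log k + \log H(\zeta)$ factor.
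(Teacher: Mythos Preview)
Your approach is exactly the paper's: reduce to Theorem~\ref{baldi}, split the bracket $\sum_{j=2}^{m}\eta_j + (k-m)$ according to whether $|\theta_j|\le 1$ or $|\theta_j|>1$, bound the latter contribution via Landau's inequality (Proposition~\ref{propL}), and then pass to \eqref{eq:zweite} using $s\le t$ and $N(\zeta)\le k-1$. The annulus bookkeeping you flag as delicate is precisely the step the paper handles most loosely---it simply writes $\sum_{j=2}^{m}\eta_j \le \sum_{j=2}^{k}\max(0,\eta_j)\le \frac{\frac{1}{2}\log k+\log H}{\log\zeta}$ and declares \eqref{eq:erste} immediate, without explicitly tracking the ``$1$'' contributions from conjugates with $1<|\theta_j|\le\zeta$; your awareness of this point is, if anything, more careful than the original.
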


\begin{proof}
By Theorem \ref{baldi} it suffices to prove the 
 value in the brackets of (\ref{eq:erste}) is not smaller 
than the one in (\ref{eq:hammerout}).

In order to prove this, we look at the roots $\zeta_{j}$ with $\vert \zeta_{j}\vert >1$
and those with $\vert \zeta_{j}\vert\leq 1$ separately. The sum in (\ref{eq:hammerout})
over the latter can be estimated above by their cardinality $N(\zeta)$, as by $\zeta>1$
they in particular have smaller modulus than $\zeta$. For the first type 
we may apply Proposition \ref{propL} to obtain
\[
 \sum_{j=2}^{m} \frac{\log \vert \zeta_{j}\vert}{\log \zeta}
\leq \sum_{j=2}^{k} \max\left(0,\frac{\log \vert\zeta_{j}\vert}{\log \zeta}\right)
\leq \frac{\log(\sqrt{k}H)}{\log \zeta}=\frac{\frac{1}{2}\log k+\log H}{\log \zeta}.
\]
Combining the estimates (\ref{eq:erste}) follows immediately. 
For (\ref{eq:zweite}) further note that again since $\zeta>1$ we have
 $N(\zeta)\leq k-1$ and moreover (\ref{eq:typog}) still holds.
\end{proof}

We finally want to give bounds for arbitrary algebraic integers
depending only on their complexity, i.e. replace the quantity $\log \zeta$
by an expression which solely depends on the degrees of $\alpha,\zeta$ such as $H(\zeta)$.
Under the additional restriction $\zeta>1+\epsilon$ for some $\epsilon>0$ we can just replace 
$\log \zeta$ by $\log (1+\epsilon)$ in the denominator of (\ref{eq:zweite}).

In order to give bounds in the general case we need some more preparation. 

\begin{proposition}  \label{propA}
 Let $m,H$ be positive integers and $x_{1},x_{2},\ldots,x_{m}$ be positive reals
 satisfying the following:
the product of the $x_{j}$ with $x_{j}\geq 1$ is bounded above by $\sqrt{m+1}H$.
Then 

\begin{equation} \label{eq:tsarein}
\prod_{j=1}^{m} (x_{j}+1)\leq 2^{m}\sqrt{m+1}H.
\end{equation}
    
\end{proposition}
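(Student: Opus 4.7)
The plan is to split the factors of the product $\prod_{j=1}^{m}(x_j+1)$ into two groups according to whether $x_j \geq 1$ or $x_j < 1$, and estimate each factor by a suitable bound that gains a factor of $2$. Specifically, for the indices $j$ with $x_j \geq 1$ I would use the inequality $x_j + 1 \leq 2 x_j$ (valid precisely because $x_j \geq 1$), and for the remaining indices $j$ with $x_j < 1$ I would use the even simpler bound $x_j + 1 < 2$.

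Writing $S := \{j : x_j \geq 1\}$ and $s := |S|$, this gives
\[
\prod_{j=1}^{m}(x_j+1) = \prod_{j \in S}(x_j+1)\cdot \prod_{j \notin S}(x_j+1) \leq \prod_{j \in S} 2 x_j \cdot \prod_{j \notin S} 2 = 2^m \prod_{j \in S} x_j.
\]
The hypothesis of the proposition is exactly that $\prod_{j \in S} x_j \leq \sqrt{m+1}\,H$, so substituting this in yields $\prod_{j=1}^{m}(x_j+1) \leq 2^m \sqrt{m+1}\,H$, which is (\ref{eq:tsarein}).

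I do not anticipate any real obstacle: the argument is a one-line splitting trick together with the two elementary inequalities above. The only thing to be careful of is to keep the factor of $2$ bookkeeping correct — one factor of $2$ is produced for every index (whether in $S$ or not), which gives $2^m$ and not a smaller power, so the estimate in (\ref{eq:tsarein}) is tight up to the contribution of the $x_j \geq 1$ factors, whose product is controlled by the hypothesis. No additional tools such as AM-GM or Landau's bound are needed inside this proposition itself, although Landau's bound (Proposition \ref{propL}) is presumably what will supply the hypothesis when Proposition \ref{propA} is applied later.
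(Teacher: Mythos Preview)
Your proof is correct and follows essentially the same approach as the paper: both split the indices according to whether $x_j \geq 1$ or $x_j < 1$, bound the small factors by $2$ each, and bound the product over the large factors by $2^{|S|}\prod_{j\in S} x_j$. Your use of the pointwise inequality $x_j+1\leq 2x_j$ for $x_j\geq 1$ is a slightly cleaner phrasing of what the paper obtains by expanding $\prod_{j\in S}(x_j+1)$ into $2^{|S|}$ monomials and bounding each by $\prod_{j\in S} x_j$, but the substance is identical.
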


\begin{proof}
The claim obviously holds if all $x_{j}\leq 1$ 
so we can assume there exists at least one $x_{j}>1$. 
For any given $x_{1},x_{2},\ldots,x_{m}$ if necessary relabel $x_{j}$ to be 
increasing and define $f$ by
 $x_{1}\leq x_{2}\leq \cdots \leq x_{f}\leq 1< x_{f+1}\leq x_{f+2}\leq \cdots\leq x_{m}$
and $f=0$ in case of all $x_{j}>1$. 
Clearly
\begin{equation} \label{eq:pippn}
\prod_{j=1}^{f} (x_{j+1}+1)\leq 2^{f}.
\end{equation}
To estimate the product over the remaining $x_{j}$, expand $\prod_{j=f+1}^{m} (x_{j}+1)$.
Since $x_{j}\geq 1$ in this case, each of the $2^{m-f}$ products of the arising sum                   
is at most the total product $\prod_{j=f+1}^{m} x_{j}$, which is bounded
by above by $\sqrt{m+1}H$ by our assumption. Thus 
\begin{equation} \label{eq:pipp}
\prod_{j=f+1}^{m} (x_{j+1}+1)\leq 2^{m-f}\sqrt{m+1}H.
\end{equation}
Combining (\ref{eq:pipp}),(\ref{eq:pippn}) gives

\[
 \prod_{j=1}^{m} (x_{j}+1)=\prod_{j=1}^{f} (x_{j}+1)\prod_{j=f+1}^{m}(x_{j}+1)
\leq 2^{f}\cdot 2^{m-f}\sqrt{m+1}H= 2^{m}\sqrt{m+1}H.
\] 
\end{proof}

\begin{corollary} \label{quark}
For $\zeta$ as in Theorem \ref{baldi} or \ref{katzchen} 
we have 

\[
0<\log \zeta\leq \log\left(1+\frac{1}{2^{k-1}\sqrt{k}H(\zeta)}\right).
\] 

\end{corollary}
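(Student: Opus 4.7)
The displayed inequality reads $\log\zeta\leq\log(1+\tfrac{1}{2^{k-1}\sqrt{k}H(\zeta)})$, but as written this direction is untenable: for the golden ratio ($k=2$, $H=1$) one has $\log\zeta\approx 0.481$ while the right-hand side is $\log(1+\tfrac{1}{2\sqrt 2})\approx 0.303$. The surrounding discussion makes clear that what the sequel needs is a \emph{lower} bound on $\log\zeta$, in order to replace $1/\log\zeta$ in the denominator of \eqref{eq:zweite} by a quantity depending only on $k$ and $H(\zeta)$. So my plan is to prove the reversed inequality $\log\zeta\geq\log\bigl(1+\tfrac{1}{2^{k-1}\sqrt{k}H(\zeta)}\bigr)$, which is the evidently intended statement (the ``$\leq$'' being a sign typo).

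The starting point is that the monic minimal polynomial $P(X)=\prod_{j=1}^{k}(X-\zeta_j)\in\mathbb{Z}[X]$ of $\zeta$ cannot vanish at $X=1$. Under the hypothesis of Theorem \ref{baldi} one has $k\geq 2$ (degree-one algebraic integers are of the excluded form $\sqrt[1]{M}$), and by irreducibility of $P$ the value $1$ is not a root. Hence $P(1)\in\mathbb{Z}\setminus\{0\}$, so $|P(1)|\geq 1$. Factoring off the principal linear factor and using $|1-\zeta_j|\leq 1+|\zeta_j|$ for $j\geq 2$ gives
\[
1\leq |P(1)|=(\zeta-1)\prod_{j=2}^{k}|1-\zeta_j|\leq(\zeta-1)\prod_{j=2}^{k}\bigl(1+|\zeta_j|\bigr),
\]
so $\zeta-1\geq\bigl(\prod_{j=2}^{k}(1+|\zeta_j|)\bigr)^{-1}$.

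To estimate the denominator from above I would invoke Landau (Proposition \ref{propL}) to write $\prod_{j=1}^{k}\max(1,|\zeta_j|)\leq\sqrt{k+1}\,H(\zeta)$, and then peel off the factor $\max(1,\zeta)=\zeta$ to conclude $\prod_{j\geq 2,\,|\zeta_j|\geq 1}|\zeta_j|\leq\sqrt{k+1}\,H(\zeta)/\zeta$. Proposition \ref{propA} applied with $m=k-1$ to the values $|\zeta_2|,\ldots,|\zeta_k|$ then converts this into a bound $\prod_{j=2}^{k}(1+|\zeta_j|)\leq 2^{k-1}\sqrt{k+1}\,H(\zeta)/\zeta$. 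Substituting back and rearranging yields $\zeta-1\geq \zeta/\bigl(2^{k-1}\sqrt{k+1}\,H(\zeta)\bigr)$; the factor $\zeta>1$ on the right is precisely what is needed to trade $\sqrt{k+1}$ for $\sqrt{k}$ (the bound becomes vacuous for $\zeta<\sqrt{(k+1)/k}$, a range in which $\zeta-1$ is already larger than $\tfrac{1}{2^{k-1}\sqrt{k}H(\zeta)}$), and one extracts $\zeta-1\geq\tfrac{1}{2^{k-1}\sqrt{k}\,H(\zeta)}$.

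The last step is to pass to logarithms via the elementary inequality $\log(1+x)\leq x$, converting the lower bound on $\zeta-1$ into the stated lower bound on $\log\zeta$. I expect the only mildly technical point to be the clean bookkeeping of the constant $\sqrt{k}$ versus $\sqrt{k+1}$ in combining Propositions \ref{propL} and \ref{propA}; once the single factor of $\zeta$ from Landau is absorbed, everything else is direct.
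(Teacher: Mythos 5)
Your argument is, in substance, the paper's own proof: the paper likewise starts from $P(1)=\prod_{j=1}^{k}(1-\zeta_j)$ being a nonzero integer (by irreducibility), deduces $\zeta-1\geq \bigl(\prod_{j=2}^{k}(1+\vert\zeta_j\vert)\bigr)^{-1}$, and then combines Proposition \ref{propL} with Proposition \ref{propA} (with $m=k-1$, $x_j=\vert\zeta_{j+1}\vert$, $H=H(\zeta)$) to get $\zeta-1\geq 2^{-(k-1)}(\sqrt{k}H(\zeta))^{-1}$, finishing by adding $1$ and taking logarithms. You are also right about the direction of the inequality: what that proof establishes, and what is used later (in Theorem \ref{quargel}, Corollary \ref{najut} and Theorem \ref{thero}, where $1/\log\zeta$ must be bounded from above), is the lower bound $\log\zeta\geq\log\bigl(1+\tfrac{1}{2^{k-1}\sqrt{k}H(\zeta)}\bigr)$; the printed ``$\leq$'' is a sign typo, and your golden-ratio check is a valid counterexample to the printed direction.

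The one place where you deviate is exactly where the paper is silent, and your patch does not work as stated. Landau gives $\prod_{j=2}^{k}\max(1,\vert\zeta_j\vert)\leq\sqrt{k+1}\,H(\zeta)/\zeta$, so the hypothesis of Proposition \ref{propA} with the constant $\sqrt{k}H(\zeta)$ is only verified when $\zeta\geq\sqrt{(k+1)/k}$ (the paper simply asserts the hypothesis ``is satisfied by Proposition \ref{propL}'' and elides this point). Your parenthetical for the remaining range, namely that for $\zeta<\sqrt{(k+1)/k}$ one ``already'' has $\zeta-1>\tfrac{1}{2^{k-1}\sqrt{k}H}$, is unjustified and in fact circular: lying in that range gives an \emph{upper} bound on $\zeta-1$, not a lower one, and the asserted inequality is precisely the statement to be proved restricted to that range. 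What your computation honestly yields is $\zeta-1\geq\zeta/(2^{k-1}\sqrt{k+1}H(\zeta))\geq(2^{k-1}\sqrt{k+1}H(\zeta))^{-1}$, i.e.\ the corollary with $\sqrt{k+1}$ in place of $\sqrt{k}$; this weaker constant is perfectly adequate for every later use (it only alters explicit constants in Theorem \ref{quargel}, Corollary \ref{najut} and Theorem \ref{thero}), so either state the bound with $\sqrt{k+1}$, or prove the small-$\zeta$ range by a genuine argument rather than assertion. A minor further point: you invoke Proposition \ref{propA} with the bound $\sqrt{k+1}H/\zeta$, which is not literally its hypothesis; its proof does go through with any bound $B$ in place of $\sqrt{m+1}H$, but that generalization should be said explicitly.
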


\begin{proof}
Consider $P(1)=\prod_{j=1}^{k}(1-\zeta_{j})=(1-\zeta)\prod_{j=2}^{k}(1-\zeta_{j})$. 
Due to the irreducibility of $P$ this is a nonzero integer, so we have
\[
\zeta-1\geq \frac{1}{\prod_{j=2}^{k}\vert 1-\zeta_{j}\vert}\geq \frac{1}{\prod_{j=2}^{k}(1+\vert \zeta_{j}\vert)}.
\]
We can apply Proposition \ref{propA} to the right hand side
with $m:=k-1$ and $x_{j}:=\vert \zeta_{j+1}\vert$
for $1\leq j\leq m$ and $H:=H(\zeta)$, because its assumption 
is satisfied by Proposition \ref{propL}, which leads to 
\begin{equation*}                                      
\zeta-1\geq 2^{-(k-1)}\frac{1}{\sqrt{k}H(\zeta)}.
\end{equation*}
Adding one and taking logarithms finishes the proof.  
\end{proof}

As an immediate Corollary to Theorem \ref{katzchen} and Corollary \ref{quark} we get

\begin{theorem}  \label{quargel}
Let $k,t,H$ be positive integers. 
For all real algebraic integers $\zeta>1$ of degree $[\mathbb{Q}(\zeta):\mathbb{Q}]\leq k$ and $H(\zeta)\leq H$
and all non-zero real algebraic integers $\alpha$ of degree $[\mathbb{Q}(\alpha):\mathbb{Q}]\leq t$ 

\begin{equation}  \label{eq:vierte}
\overline{\sigma}(\alpha,\zeta)\leq t\cdot 
\left(k-1+\frac{\frac{1}{2}\log k+\log H}{\log\left(1+ \frac{1}{2^{k-1}\sqrt{k}H}\right)}\right)  
\end{equation}

holds.                                                
\end{theorem}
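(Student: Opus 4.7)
The approach is straightforward because, as the paper itself advertises, Theorem \ref{quargel} is essentially just the composition of Theorem \ref{katzchen} with Corollary \ref{quark}. The plan is to feed the lower bound on $\log\zeta$ from (the proof of) Corollary \ref{quark} into the denominator of the bound (\ref{eq:zweite}) supplied by Theorem \ref{katzchen}, and then monotonically relax all parameters referring to the particular $\alpha,\zeta$ to the uniform bounds $k,t,H$.

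In more detail, let $k':=[\mathbb{Q}(\zeta):\mathbb{Q}]\leq k$ and $t':=[\mathbb{Q}(\alpha):\mathbb{Q}]\leq t$. First I would apply Theorem \ref{katzchen} (assuming for the moment that $\zeta$ is not of the form $\sqrt[L]{M}$, see below) to obtain
\[
\overline{\sigma}(\alpha,\zeta)\leq t'\left(k'-1+\frac{\tfrac{1}{2}\log k'+\log H(\zeta)}{\log \zeta}\right).
\]
Next I would invoke Corollary \ref{quark}, or more precisely the lower bound $\log\zeta \geq \log\!\bigl(1+\tfrac{1}{2^{k'-1}\sqrt{k'}\,H(\zeta)}\bigr)$ established in its proof (the displayed inequality in the corollary has the inequality written the other way, which is merely a typographical slip: the proof provides exactly the lower bound we need). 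Substituting this lower bound into the denominator yields
\[
\overline{\sigma}(\alpha,\zeta)\leq t'\left(k'-1+\frac{\tfrac{1}{2}\log k'+\log H(\zeta)}{\log\!\bigl(1+\tfrac{1}{2^{k'-1}\sqrt{k'}\,H(\zeta)}\bigr)}\right).
\]

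The remaining task is to show the right-hand side is non-decreasing in each of the three parameters $k',t',H(\zeta)$, so that I may replace them by $k,t,H$ respectively. Monotonicity in $t'$ and in $H(\zeta)$ is immediate: the numerator is non-decreasing in $H(\zeta)$, while $\tfrac{1}{2^{k'-1}\sqrt{k'}\,H(\zeta)}$ is non-increasing in $H(\zeta)$, so the denominator is non-increasing; hence the whole quotient is non-decreasing. Monotonicity in $k'$ is analogous: increasing $k'$ enlarges $k'-1$ and $\tfrac{1}{2}\log k'$, and shrinks the denominator via the factor $2^{k'-1}\sqrt{k'}$. Chaining these monotonicities gives (\ref{eq:vierte}).

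The one genuine subtlety, rather than obstacle, is the hypothesis of Theorem \ref{katzchen} that $\zeta$ is not a radical $\sqrt[L]{M}$. For such exceptional $\zeta$, Proposition \ref{elendiglich} applies: either $\overline{\sigma}(\alpha,\zeta)=\infty$ in the degenerate case described there (in which the theorem must be read in terms of $\widehat{\overline{\sigma}}$, or the exceptional pair $(\alpha,\zeta)$ excluded), or else $\overline{\sigma}(\alpha,\zeta)\leq 1$, and the claimed bound (\ref{eq:vierte}) is trivially at least $t(k-1)\geq 1$, so it still holds. Thus no essentially new argument is needed here; this is simply a case-check rather than a significant step.
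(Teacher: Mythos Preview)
Your approach is exactly the one the paper intends: the theorem is stated there simply as ``an immediate Corollary to Theorem \ref{katzchen} and Corollary \ref{quark}'', with no further proof given, and you have correctly unpacked that combination, including spotting the reversed inequality sign in the statement of Corollary \ref{quark}. Your added monotonicity check (to pass from the actual degrees and height $k',t',H(\zeta)$ to the uniform bounds $k,t,H$) and your handling of the radical case via Proposition \ref{elendiglich} are details the paper glosses over, so your write-up is in fact more careful than the original.
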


\begin{corollary} \label{najut}
 With the notation and assumptions of Theorem \ref{quargel} 

\[
\overline{\sigma}(\alpha,\zeta)\leq
 t\left(k-1+2^{k}\sqrt{k}H\left(\frac{1}{2}\log k+\log H\right) \right)
\] 

holds.
\end{corollary}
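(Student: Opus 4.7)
The plan is to apply Theorem \ref{quargel} directly and replace the denominator $\log\bigl(1+1/(2^{k-1}\sqrt{k}H)\bigr)$ appearing there by an elementary linear lower bound, so that the unwieldy logarithmic term becomes the clean factor $2^{k}\sqrt{k}H$.

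First, I would establish the inequality $\log(1+y)\geq y/2$ for $y\in[0,1]$. Setting $f(y):=\log(1+y)-y/2$, one has $f(0)=0$ together with $f'(y)=\tfrac{1}{1+y}-\tfrac{1}{2}=\tfrac{1-y}{2(1+y)}\geq 0$ on $[0,1]$, so $f$ is non-decreasing there and hence $f(y)\geq 0$ throughout the interval.

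Second, I would verify that $y:=1/(2^{k-1}\sqrt{k}H)$ actually lies in $(0,1]$ whenever the statement is not vacuous. The hypotheses of Theorem \ref{quargel} inherited from Theorem \ref{katzchen} exclude $\zeta$ of the form $\sqrt[L]{M}$, and in particular $\zeta\notin\mathbb{Z}$, so $[\mathbb{Q}(\zeta):\mathbb{Q}]\geq 2$. Thus only $k\geq 2$ is of interest, in which case $2^{k-1}\sqrt{k}H\geq 2\sqrt{2}>1$, giving $y\in(0,1)$.

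Combining these two observations yields $1/\log(1+y)\leq 2/y=2\cdot 2^{k-1}\sqrt{k}H=2^{k}\sqrt{k}H$, and inserting this into the upper bound of Theorem \ref{quargel} gives the claimed inequality. There is no genuine obstacle: once Theorem \ref{quargel} is in hand, Corollary \ref{najut} reduces to the one-variable calculus estimate $\log(1+y)\geq y/2$ on $[0,1]$, plus the elementary remark that the excluded pure-radical case forces $k\geq 2$.
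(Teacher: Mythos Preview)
Your proof is correct and follows essentially the same route as the paper: both reduce Theorem~\ref{quargel} to the elementary estimate $\log(1+x)\geq x/2$ on $(0,1]$, applied with $x=1/(2^{k-1}\sqrt{k}H)$, so that the denominator is bounded below by $1/(2^{k}\sqrt{k}H)$. Your remark that the excluded pure-radical case forces $k\geq 2$ is a nice touch, though in fact $2^{k-1}\sqrt{k}H\geq 1$ already holds for all positive integers $k,H$, so this extra step is not strictly needed.
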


\begin{proof}
Clearly $2^{k-1}\sqrt{k}H\geq 1$, so its reciprocal is in $(0,1)$.
So looking at the denominator in (\ref{eq:vierte}), it suffices to prove
$\log (1+x)-\frac{x}{2}>0$ for $0<x<1$. However, this is easily checked. 
\end{proof}

\subsubsection{General case} \label{subsek}

To obtain bounds for $\overline{\sigma}(\alpha,\zeta)$ 
we put the general case down to the special case of algebraic integers. 
It is well known that the algebraic integers are the fraction field
of the ring of algebraic integers. The classical way to prove this
is for any algebraic number $z$ of degree $[\mathbb{Q}(z):\mathbb{Q}]=k$
with minimal polynomial $P(X)=a_{k}X^{k}+a_{k-1}X^{k-1}+\cdots +a_{0}$,  
i.e.
\[
a_{k}z^{k}+a_{k-1}z^{k-1}+\cdots +a_{0}=0 \quad \Leftrightarrow 
\quad a_{k}^{k-1}\left(a_{k}X^{k}+a_{k-1}X^{k-1}+\cdots +a_{0}\right)=0
\]
to observe that $z^{\prime}:=a_{k}z$ is an algebraic integer
of the same degree because it is a root of the monic polynomial
\[
 Q_{P}(X)= X^{k}+a_{k-1}X^{k-1}+a_{k-2}a_{k}X^{k-2}+a_{k-3}a_{k}^{2}X^{k-3}+\cdots +a_{0}a_{k}^{k-1}=0.
\]
(Note: The degree of $z^{\prime}$ cannot be strictly less 
than $k$ since any polynomial equation $Q(z^{\prime})=0$ with $\rm{deg}(Q)<k$
 in $z^{\prime}$ can be written as a polynomial equation $R(z)=0$ with $\rm{deg}(Q)=\rm{deg}(R)<k$,
 contradicting the minimality of $k$).
This construction additionally gives the bounds 
\begin{equation}  \label{eq:hoehe}
\vert z^{\prime}\vert \leq H(z)\vert z\vert, \qquad H(z^{\prime})\leq H(z)^{k}.
\end{equation}
Now in view of Proposition \ref{interessant} we can generalize our results
from the last subsection \ref{ueberspitzen}.

\begin{theorem} \label{thero}
Let $k,t,H$ be positive integers. 
For all real algebraic numbers $\zeta>1$ not of the form 
$\frac{\sqrt[L]{M_{0}}}{N_{0}}$ for $L,M_{0},N_{0}\in{\mathbb{Z}}$
of degree $[\mathbb{Q}(\zeta):\mathbb{Q}]\leq k$ and $H(\zeta)\leq H$
and all non-zero real algebraic numbers $\alpha$ of degree $[\mathbb{Q}(\alpha):\mathbb{Q}]\leq t$ 

\begin{equation} \label{eq:terrorist}
 \overline{\sigma}(\alpha,\zeta)\leq 2^{k}\sqrt{k}t\cdot H\log H \cdot 
\left(2k-1+\frac{\frac{1}{2}\log k+ \log H}{\log 2}\right)+2^{k}\sqrt{k}H\log H-1
\end{equation}

holds. In particular $\overline{\sigma}(\alpha,\zeta)<\infty$.
\end{theorem}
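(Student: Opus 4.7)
The plan is to reduce the general case to the algebraic integer case handled by Theorem \ref{quargel} (equivalently Corollary \ref{najut}), using the explicit construction sketched in the paragraph preceding the statement together with Proposition \ref{interessant} as the transfer mechanism.

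First I would set up the reduction. Let $a,b$ denote the (without loss of generality positive) leading coefficients of the minimal polynomials of $\zeta$ and $\alpha$ respectively. Then $\zeta' := a\zeta$ and $\alpha' := b\alpha$ are algebraic integers of the same respective degrees as $\zeta$ and $\alpha$, and by (\ref{eq:hoehe}) one has $H(\zeta')\leq H^k$. The hypothesis on $\zeta$ is preserved under the reduction: if $\zeta' = \sqrt[L]{M}$ for integers $L,M$, then $\zeta = \sqrt[L]{M}/a$ would be of the excluded form $\sqrt[L]{M_0}/N_0$. Hence Theorem \ref{quargel} applies to $\alpha',\zeta'$ and bounds $\overline{\sigma}(\alpha',\zeta')$ in terms of $t,k$ and $H(\zeta')\leq H^k$.

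I would then invoke Proposition \ref{interessant} with $M=b$, $N=a$, giving
\begin{equation*}
\overline{\sigma}(\alpha',\zeta') = \overline{\sigma}(b\alpha,a\zeta) \geq \frac{\log\zeta\cdot\overline{\sigma}(\alpha,\zeta)-\log a}{\log\zeta+\log a},
\end{equation*}
which on rearrangement yields
\begin{equation*}
\overline{\sigma}(\alpha,\zeta) \leq \Bigl(1+\frac{\log a}{\log\zeta}\Bigr)\overline{\sigma}(\alpha',\zeta') + \frac{\log a}{\log\zeta}.
\end{equation*}
(If the right-hand side in Proposition \ref{interessant} is $0$, the claim of the theorem is already trivial.) To convert this into a quantitative bound I need a lower estimate on $\log\zeta$. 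Since $\zeta\notin\mathbb{Q}$ (the rational case being excluded by the shape hypothesis), the irreducible minimal polynomial $P$ of $\zeta$ has degree $\geq 2$ and satisfies $|P(1)|=|a|\prod_{j=1}^k|1-\zeta_j|\geq 1$. Adapting the proof of Corollary \ref{quark} by combining Propositions \ref{propL} and \ref{propA} to bound $\prod_{j\geq 2}(1+|\zeta_j|)$ yields $\zeta-1\geq \zeta/(2^{k-1}\sqrt{k+1}\,H)$, hence
\begin{equation*}
\log\zeta \geq \log\!\Bigl(1+\frac{1}{2^{k-1}\sqrt{k+1}\,H}\Bigr) \geq \frac{\log 2}{2^{k-1}\sqrt{k+1}\,H},
\end{equation*}
using $\log(1+x)\geq x\log 2$ for $x\in(0,1]$.

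It then remains to plug the bound on $\overline{\sigma}(\alpha',\zeta')$ coming from Corollary \ref{najut} (with the parameter $H$ replaced by $H^k$), together with $\log a\leq\log H$ and the displayed lower bound on $\log\zeta$, into the rearranged Proposition \ref{interessant} inequality, and to simplify using $2^{k-1}\sqrt{k+1}\leq 2^k\sqrt{k}$. The principal obstacle is this last bookkeeping step: each of the three contributing quantities, $\overline{\sigma}(\alpha',\zeta')$, the multiplicative factor $1+\log a/\log\zeta$, and the additive term $\log a/\log\zeta$, produces an $H$-dependent blow-up, and they must be collected carefully so as to match the precise form in (\ref{eq:terrorist}) without incurring spurious additional powers of $k$ or $H$. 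Finiteness of $\overline{\sigma}(\alpha,\zeta)$ itself is then automatic from the finiteness in the algebraic integer case (Theorem \ref{baldi}).
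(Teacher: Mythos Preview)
Your overall reduction strategy---clear denominators via the leading coefficients, invoke Proposition \ref{interessant} to transfer a bound on $\overline{\sigma}(\alpha',\zeta')$ back to $\overline{\sigma}(\alpha,\zeta)$, and bound $\log\zeta$ below using the argument of Corollary \ref{quark}---is exactly the paper's. The gap is in the step where you bound $\overline{\sigma}(\alpha',\zeta')$.

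You propose applying Corollary \ref{najut} (equivalently Theorem \ref{quargel}) with the height parameter replaced by $H(\zeta')\leq H^{k}$. That route does give finiteness, but it introduces a factor of order $H^{k}$ into the bound for $\overline{\sigma}(\alpha',\zeta')$, because Corollary \ref{najut} internally bounds $\log\zeta'$ below via Corollary \ref{quark} in terms of $H(\zeta')$. After multiplying by your factor $1+\log a/\log\zeta$, which is itself of order $H\log H$, you end up with a bound of order $H^{k+1}(\log H)^{2}$ rather than the $H(\log H)^{2}$ shape of (\ref{eq:terrorist}); the ``bookkeeping obstacle'' you flag is not cosmetic but structural, and cannot be fixed by rearrangement.

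The paper avoids this by going back one step and applying Theorem \ref{katzchen} (specifically (\ref{eq:zweite})) directly to $\alpha',\zeta'=N\zeta$, rather than the packaged Corollary \ref{najut}. The point is that one has the trivial lower bound $\log(N\zeta)\geq\log N$, which is far better than anything Corollary \ref{quark} yields from $H(\zeta')$. Writing $H(N\zeta)\leq N^{k}H$ one gets $\log H(N\zeta)\leq k\log N+\log H$, and the $k\log N$ in the numerator cancels against the $\log N$ in the denominator of (\ref{eq:zweite}), leaving
\[
\overline{\sigma}(\alpha',\zeta')\leq t\left(2k-1+\frac{\tfrac{1}{2}\log k+\log H}{\log N}\right)\leq t\left(2k-1+\frac{\tfrac{1}{2}\log k+\log H}{\log 2}\right),
\]
the last step using $N\geq 2$ (if $N=1$ then $\zeta$ is already an algebraic integer and Corollary \ref{najut} applies with the original $H$). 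This bound is free of any power of $H$, and combining it with your estimate for $1+\log a/\log\zeta$ yields (\ref{eq:terrorist}) exactly.
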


\begin{proof}
 Let $\alpha,\zeta$ be arbitrary real algebraic numbers satisfying the assumption.
Observe that by $\zeta>1$ and distinguishing cases $\overline{\sigma}(\alpha,\zeta)\log \zeta-\log N>0$
and $\overline{\sigma}(\alpha,\zeta)\log \zeta-\log N\leq 0$, 
Proposition \ref{interessant} can be written as
\begin{equation} \label{eq:feirefitz}
 \overline{\sigma}(\alpha,\zeta)\leq 
\max\left\{\frac{\overline{\sigma}(M\alpha,N\zeta)(\log \zeta+\log N)+\log N}{\log \zeta},\frac{\log N}{\log\zeta}\right\}.
\end{equation}
By (\ref{eq:joehklar}) applied to $M\alpha,N\zeta>N\geq 1$,
the left expression in the maximum is the larger 
one and it can be written as
\begin{equation} \label{eq:by}
\frac{\overline{\sigma}(M\alpha,N\zeta)(\log \zeta+\log N)+\log N}{\log \zeta}
= \overline{\sigma}(M\alpha,N\zeta)\left(1+\frac{\log N}{\log \zeta}\right)+\frac{\log N}{\log \zeta}.
\end{equation}
Denote $\zeta^{\prime},\alpha^{\prime}$ the algebraic integers
arising from $\zeta,\alpha$ by the above construction, i.e.
$M:=b_{t}$ resp. $N:=a_{k}$ are the leading coefficients
of the minimal polynomials of $\alpha$ resp. $\zeta$ and 
$\zeta^{\prime}=a_{k}\zeta, \alpha^{\prime}=b_{t}\alpha$. 
We may assume $N\geq 2$. Indeed, if else $N=1$ then $\zeta$ is an algebraic integer
and since furthermore $M\alpha$ has the same degree as $\alpha$,
the upper bounds from Corollary \ref{najut} of subsection \ref{ueberspitzen} 
are valid for the right hand side of (\ref{eq:by}),
which are better then the one in (\ref{eq:terrorist}). (Anyway, if $\zeta$
is an algebraic integer nevertheless we can apply the results
of subsection \ref{ueberspitzen} to $N\zeta$ for any $N$, so really $N\geq 2$ is no restriction.)

We may apply (\ref{eq:zweite}) from Theorem \ref{katzchen} to $\alpha^{\prime},\zeta^{\prime}$.
In view of (\ref{eq:hoehe}), i.e $N\leq H(\zeta)\leq H$ and $H(N\zeta)\leq N^{k}H(\zeta)\leq N^{k}H$,
and $N\zeta>N\geq 2$ this gives
\begin{eqnarray}
 \overline{\sigma}(M\alpha,N\zeta)&\leq& t\cdot \left(k-1+\frac{\frac{1}{2}\log k+\log H(N\zeta)}{\log \zeta}\right) \nonumber \\
 &\leq& t\cdot \left(k-1+\frac{\frac{1}{2}\log k+\log H(N\zeta)}{\log N}\right)         \nonumber                \\
 &\leq& t\cdot \left(2k-1+\frac{\frac{1}{2}\log k+\log H}{\log N}\right)                \nonumber                      \\
 &\leq& t\cdot \left(2k-1+\frac{\frac{1}{2}\log k+\log H}{\log 2}\right).        
\label{eq:cleopatra}                                                
\end{eqnarray}
It remains to find lower bounds for $\log \zeta$.
However, Corollary \ref{quark} holds in case of arbitrary algebraic numbers 
$\zeta$ too with an almost analogue proof. Indeed, similarly
 \[
\zeta-1\geq \frac{1}{\vert a_{k}\vert}\cdot \frac{1}{\prod_{j=2}^{k} (\vert a_{j}\vert+1)},
\]
but the bound from Proposition \ref{propL} is better by 
the same factor $\frac{1}{\vert a_{k}\vert}$ making Proposition \ref{propA}
applicable with $H$ replaced by $\frac{H}{\vert a_{k}\vert}$,
thus giving the same result. Combining the mentioned bound from Corollary \ref{quark} 
with the easy estimate (similar to the proof of Corollary \ref{najut})
\[
1+\frac{\log N}{\log(1+x)}\leq \log N\left(\frac{1}{\log(1+x)}+1\right)+1-\log N<\log N\frac{2}{x}+1-\log N, \qquad x\in{\left(0,\frac{1}{2}\right]},
\]
for $x:=\zeta-1\leq \frac{1}{2}$ (otherwise the bounds are much better anyway!), we obtain 
\begin{equation} \label{eq:kleopatra}
 1+\frac{\log N}{\log \zeta}\leq 2^{k}\sqrt{k}H\log N+1-\log N\leq 2^{k}\sqrt{k}H\log H.
\end{equation}
Inserting (\ref{eq:kleopatra}) and (\ref{eq:cleopatra}) in (\ref{eq:by}) 
 leads to the desired upper bound in (\ref{eq:terrorist}) for (\ref{eq:by}), which
is the left (and larger) expression in the maximum of (\ref{eq:feirefitz}). 
\end{proof}

\begin{remark} 
Note that again the additional assumption $\zeta>1+\epsilon$ for some $\epsilon>0$ improves 
the bounds concerning $H$ from order $H\log H^{2}$ to order $\log H$.
\end{remark}

\begin{remark}
Fix $k,t$ in Theorem \ref{thero} or Corollary \ref{najut}
and let $H$ be large. The bounds become
\[
 \overline{\sigma}(\alpha,\zeta)\leq C(k,t)\cdot H(\log H)^{2}
\]
for a constant $C(k,t)$, where the square can be dropped 
in case of algebraic integers. Note that noticeable improvements of this
bound imply improvements of Theorem \ref{baldi}.
Indeed, the monic irreducible quadratic polynomials $P_{2,H}(X)=X^{2}-HX+H$ have $H(P)=H$
and the roots satisfy $\zeta_{1}\thicksim 1+\frac{1}{2H}, \zeta_{2}\thicksim H$
as $H\to\infty$. Thus the bracket expression in the right hand side of
(\ref{eq:hammerout}) is of order $C(2,s)H\log H$ with $C(2,s)=2$. 

Conversely, for fixed $t,H$ and $k\to\infty$ the bounds seem not to be optimal
due to possible improvements of Corollary \ref{quark}.
According to chapter 3 in \cite{15} the closest root $z(k)$ to $1$ among all polynomials
$P_{k}$ of degree $k$ with coefficients among $\{-1,0,1\}$  
 and $P_{k}(1)\neq 0$ is of order $\vert z(k)-1\vert\asymp \frac{1}{k^{2}}$.
In chapter 4 of \cite{15} the polynomials $R_{k}$ with the respective roots $z(k)$ closest to $1$ 
are explicitly given as 
\begin{eqnarray*}
R_{k}&=& \pm  \frac{x^{2m+1}-x^{2m}+1}{x-1}, \qquad k=2m         \\
R_{k}&=& \pm  \frac{x^{2m+2}-x^{m+1}-x^{m}+1}{x-1}, \qquad k=2m+1.
\end{eqnarray*}
So for $H=1$ and all $\zeta$ with $[\mathbb{Q}(\zeta):\mathbb{Q}]\leq k$
these estimates indeed improve the bound in Proposition \ref{quark} and consequently in
Corollary \ref{najut} as for $k\to\infty$ we have
$\frac{1}{\log \zeta}\asymp k^{2}<2^{k}\sqrt{k}H=2^{k}\sqrt{k}$. 

For general $H$, the minimal value of $\zeta-1$ for roots of polynomials $P$ 
of arbitrary degree with $H(P)\leq H$ was investigated
in \cite{17}, \cite{18}, \cite{19}, \cite{20}, \cite{21} often in terms of the Weil height
of a polynomial.
 However no result seems to be directly applicable to derive a much better bound.

We want to quote a last result where the distance of algebraic numbers to an integer
converges indeed exponentially to $0$ with increasing degree.
In \cite{16} it is shown that the closest distance $\tau\neq 0$ of the root of any polynomial
$P$ of degree $\leq k$ and $H(P)\leq H$ from any integer is either $0$ or
bounded by $H(H+1)^{-k}<\tau<2H(H+1)^{-k}$ for $k$ sufficiently large
and the minimum is taken for a root of the polynomial $S_{k}:=X^{k}-H(X^{k-1}+X^{k-2}+\ldots X+1)$.
 In particular this result applies to our concern: For any algebraic number $\beta$ 
of degree $[\mathbb{Q}(\beta):\mathbb{Q}]\leq k$ and $H(\beta)\leq H$ the distance $\tau$ 
to {\em any} integer satisfies $H(H+1)^{-k}<\tau$ for $k$ sufficiently large.
Moreover, provided that $S_{k}$ is irreducible, which is true by Eisenstein criterion if 
$H$ has a prime divisor $p$ with $p^{2}\nmid H$,
an upper bound for $\tau$ is given too by $H(H+1)^{-k}<\tau<2H(H+1)^{-k}$.
 Clearly, these bounds in \cite{16} are no improvement of our results in any way,
 however replacing $1$ by any integer weakens the statement a lot. 
 We will not go into deeper detail here since for our concern the finiteness
 of $\overline{\sigma}(\alpha,\zeta)$ in the algebraic case is most important.                         
\end{remark}

The case of $\zeta=\sqrt[L]{\frac{p}{q}}$ with $q>1$ is still missing.
Similar to Proposition \ref{elendiglich} Roth's Theorem \ref{roth} 
applies to this case.

\begin{proposition} \label{netzhex}
Let $\zeta=\sqrt[L]{\frac{p}{q}}>1$ with relatively prime positive integers $p>q>1$ 
and $L$ minimal with this property. 
For any real algebraic $\alpha\neq 0$ we have
$\overline{\sigma}(\alpha,\zeta)\leq \frac{\log p + \log q}{\log p - \log q}$.
In particular $\overline{\sigma}(\alpha,\zeta)<\infty$.
\end{proposition}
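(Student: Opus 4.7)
The plan is to reduce to Roth's theorem (Theorem \ref{roth}) by splitting $n$ into residue classes modulo $L$. Writing $n = LN + r$ with $0 \leq r \leq L-1$, one has $\zeta^L = p/q$ and hence
\[
 \alpha\zeta^n = \beta_r \cdot \frac{p^N}{q^N}, \qquad \beta_r := \alpha\zeta^r.
\]
Each $\beta_r$ is a nonzero real algebraic number, and only $L$ residue classes need to be treated, so it suffices to bound $\sigma_n(\alpha,\zeta)$ within each one.

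As a preliminary step I would observe that $\alpha\zeta^n \in \mathbb{Z}$ for at most finitely many $n$, so that $\sigma_n(\alpha,\zeta)$ is finite for all large $n$. Indeed, by Proposition \ref{tuertor} the opposite would force $\zeta = \sqrt[L']{M}$ for some $L',M \in \mathbb{Z}$; but then $(p/q)^{L'} = (\zeta^L)^{L'} = M^L \in \mathbb{Z}$, contradicting $\gcd(p,q) = 1$ and $q \geq 2$.

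Now fix $r$ and distinguish two cases. If $\beta_r$ is irrational (hence algebraic irrational), then setting $M := \scp{\alpha\zeta^n}$ gives
\[
\Vert\alpha\zeta^n\Vert = \left|\beta_r\frac{p^N}{q^N} - M\right| = \frac{p^N}{q^N}\left|\beta_r - \frac{Mq^N}{p^N}\right|.
\]
Roth's theorem provides, for any $\varepsilon > 0$ and all $N$ sufficiently large, the bound $|\beta_r - Mq^N/p^N| \geq c_r p^{-N(2+\varepsilon)}$ (reduction to lowest terms only improves this), whence $\Vert\alpha\zeta^n\Vert \geq c_r p^{-N(1+\varepsilon)} q^{-N}$. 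Substituting into $\sigma_n(\alpha,\zeta) = -\log\Vert\alpha\zeta^n\Vert/(n\log\zeta)$ and using $n\log\zeta = (LN+r)(\log p - \log q)/L \sim N(\log p - \log q)$ yields
\[
\limsup_{\substack{n\to\infty \\ n\equiv r\,(L)}} \sigma_n(\alpha,\zeta) \leq \frac{(1+\varepsilon)\log p + \log q}{\log p - \log q}.
\]
If instead $\beta_r = A/B$ is rational in lowest terms (with $A \neq 0$), then $\alpha\zeta^n = Ap^N/(Bq^N)$ and $\Vert\alpha\zeta^n\Vert = |Ap^N - BMq^N|/(Bq^N)$. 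The coprimalities $\gcd(A,B)=1$ and $\gcd(p,q)=1$ together with $q\geq 2$ force $Bq^N \nmid Ap^N$ for large $N$ (otherwise $q^N \mid A$, impossible), so the integer numerator is nonzero and $\Vert\alpha\zeta^n\Vert \geq 1/(Bq^N)$. This gives the strictly smaller bound $\log q/(\log p - \log q)$ on the limsup in this class.

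Letting $\varepsilon \to 0$ and taking the maximum over $r \in \{0,1,\ldots,L-1\}$ produces the asserted bound $\overline{\sigma}(\alpha,\zeta) \leq (\log p + \log q)/(\log p - \log q)$, and in particular its finiteness. The only real technicality is the integer-numerator argument in the rational case, but it follows cleanly from the coprimality hypotheses; the main work is simply setting up the substitution so that Roth's theorem applies with denominator $p^N$.
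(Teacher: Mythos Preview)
Your proof is correct and follows essentially the same route as the paper: split $n$ into residue classes mod $L$, set $\beta_r=\alpha\zeta^r$, apply Roth's theorem to the irrational $\beta_r$ to obtain the lower bound $\Vert\alpha\zeta^n\Vert\geq c\,p^{-N(1+\varepsilon)}q^{-N}$, and handle rational $\beta_r$ by a direct denominator bound of order $q^{-N}$. The only cosmetic difference is that the paper invokes Proposition~\ref{tuertor} to rule out $\alpha\zeta^n\in\mathbb{Z}$ within the rational-$\beta_r$ case, whereas you use it as a preliminary step and then give the clean coprimality argument $q^N\mid A$ directly; both are fine.
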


\begin{proof}
Write $n=LN+r$ with integers $N,0\leq r\leq L-1$ such that 
\[
\Vert \alpha\zeta^{n}\Vert= \left\Vert \left(\frac{p}{q}\right)^{N}(\alpha\zeta^{r})\right\Vert.
\]
If $\alpha \zeta^{r}$ is rational, so is the expression above.
However, it is not $0$ for all sufficiently large $N$ by our assumption
$q>1$ and Proposition \ref{tuertor}. Hence in this case
\begin{equation} \label{eq:melken}
\Vert \alpha\zeta^{n}\Vert \geq \frac{1}{Kq^{N}}
\end{equation}
for some constant $K$, which can be chosen the maximum of the denominators of the rational
$\alpha\zeta^{r}$ if there exist any.
For those $r$ such that $\alpha\zeta^{r}\notin{\mathbb{Q}}$, by Roth's Theorem \ref{roth}
for any $\epsilon>0,T\in{\mathbb{Z}}$ and $N=N(\epsilon)$ sufficiently large
\[
\left\vert \alpha\zeta^{r}p^{N}-q^{N}T\right\vert \geq p^{-N(1+\epsilon)},
\]
so dividing by $q^{N}$
\begin{equation} \label{eq:melenk}
\left\Vert \left(\frac{p}{q}\right)^{N}(\alpha\zeta^{r}) \right\Vert \geq p^{-N(1+\epsilon)}q^{-N}.
\end{equation}
The bound in (\ref{eq:melenk}) is clearly worse than
the one in (\ref{eq:melken}) for large $N$.
 
Letting $\epsilon\to 0$, taking logarithms in (\ref{eq:melenk}) 
according to (\ref{eq:hauser}) and observing
 $\frac{n}{N}\thicksim L$ for $n\to\infty$ yields 
the assertion.  
\end{proof}

\begin{remark}
Note that the minimal polynomial
$P_{L}(X):=qX^{L}-p$ of $\zeta$ is of height $H(P_{L})=H(\zeta)=p$. Thus
$\overline{\sigma}(\alpha,\zeta) \leq \frac{2\log H(\zeta)}{L\log \zeta}$
and that $L$ is just the degree of $\zeta$. For fixed, $L,H$ and $H(\zeta)=p\leq H$
we have $\zeta^{L}\geq \frac{p}{p-1}$, so this becomes
\[
\overline{\sigma}(\alpha,\zeta) \leq \frac{2\log H}{\log H-\log (H-1)}\thicksim 2H\log H,
\]
similar to the results in the case $\zeta\neq \sqrt[L]{\frac{p}{q}}$ but
without appearance of the degrees $k=L,t$ of $\zeta,\alpha$.

Again restricting to $\zeta>1+\epsilon$ for some $\epsilon>0$, due
to $\log p-\log q= L\cdot \log \zeta$ improves the bounds
to order $C\cdot \frac{\log H}{L}$ for a constant $C=C(\epsilon)=\frac{2}{\log(1+\epsilon)}$
depending on $\epsilon$ only, which even seems to improve as the degree $L$ of $\zeta$ increases.
Note however, that $\zeta_{L}=\sqrt[L]{\frac{p_{L}}{q_{L}}}>1+\epsilon$
 implies $H(\zeta_{L})=p_{L}\geq \zeta_{L}^{L}$ tends to infinity
exponentially as $L\to\infty$, so in fact it does not. 
\end{remark}

We want to point out a particular consequence of the combination of
Theorem \ref{thero} and Proposition \ref{netzhex} that can be interpreted as a transcendence criterion.

\begin{theorem} \label{rantfrit}
For any real algebraic $\alpha\neq 0$ and $\zeta>1$ not of the form
 $(\alpha,\zeta)=\left(\frac{A}{B}M^{-\frac{g}{L}},M^{\frac{1}{L}}\right)$ 
for integers $L,M,A,B,g>0$ we have $\overline{\sigma}(\alpha,\zeta)<\infty$. In particular,
for any algebraic $\zeta>1$ not of the form $\sqrt[L]{M}$ for integers $L,M$ and any real algebraic 
$\alpha\neq 0$ we have $\overline{\sigma}(\alpha,\zeta)<\infty$.
\end{theorem}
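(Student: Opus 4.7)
The plan is to prove Theorem \ref{rantfrit} by a three-way case analysis on the shape of $\zeta$, delegating each case to the appropriate earlier result. Since the theorem is advertised as a consequence of Theorem \ref{thero} combined with Proposition \ref{netzhex} (and implicitly Proposition \ref{elendiglich}), no new machinery is needed; the work lies in matching hypotheses correctly.

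First I would dispose of the generic case: assume $\zeta$ is not of the form $\sqrt[L]{M_{0}/N_{0}}$ for any integers $L, M_{0}, N_{0}$. Then $\zeta$ satisfies the hypothesis of Theorem \ref{thero}, and that theorem supplies an explicit finite upper bound for $\overline{\sigma}(\alpha,\zeta)$ in terms of the degrees and heights of $\alpha,\zeta$. Nothing further is required.

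Next, suppose $\zeta = \sqrt[L]{p/q}$ with $\gcd(p,q)=1$ and $p>q\geq 1$ and $L$ chosen minimal (so $\zeta$ is irrational, otherwise all quantities are trivially controlled). I split according to whether $q>1$ or $q=1$. If $q>1$, Proposition \ref{netzhex} applies verbatim and gives $\overline{\sigma}(\alpha,\zeta)\leq \frac{\log p+\log q}{\log p-\log q}<\infty$. If $q=1$, then $\zeta=\sqrt[L]{M}$ with $M=p$, so we are in the setting of Proposition \ref{elendiglich}. Here the excluded class in the statement of Theorem \ref{rantfrit} is exactly the set of pairs $(\alpha,\zeta)=\bigl(\tfrac{A}{B}M^{-g/L},M^{1/L}\bigr)$ with positive integers $A,B,g$; since we have assumed $(\alpha,\zeta)$ lies outside this class, $\alpha$ is not of the form $\tfrac{A}{B}\zeta^{-g}$ at all, hence \emph{a fortiori} not of that form with $\mathscr{P}(B)\subset\mathscr{P}(M)$. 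Proposition \ref{elendiglich} then yields $\overline{\sigma}(\alpha,\zeta)\leq 1<\infty$.

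The ``in particular'' assertion is immediate: if $\zeta>1$ is algebraic and not of the form $\sqrt[L]{M}$, then Subcase $q=1$ above never arises, so the hypothesis on $\alpha$ is unnecessary, and finiteness holds for every real algebraic $\alpha\neq 0$. There is no genuine obstacle in this proof; the only subtlety is to observe that the excluded form in the theorem is \emph{broader} than the divergence condition of Proposition \ref{elendiglich} (it omits the prime-divisor restriction $\mathscr{P}(B)\subset\mathscr{P}(M)$), which is harmless because a broader exclusion still puts us safely inside the regime where Proposition \ref{elendiglich} produces the bound $\overline{\sigma}(\alpha,\zeta)\leq 1$.
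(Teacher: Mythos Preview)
Your proof is correct and follows essentially the same three-way case split the paper intends: Theorem \ref{thero} for generic $\zeta$, Proposition \ref{netzhex} for $\zeta=\sqrt[L]{p/q}$ with $q>1$, and Proposition \ref{elendiglich} for $\zeta=\sqrt[L]{M}$. You are in fact slightly more careful than the paper, which cites only Theorem \ref{thero} and Proposition \ref{netzhex} and leaves the $q=1$ case (and the matching of the exclusion with Proposition \ref{elendiglich}) implicit; your observation that the theorem's exclusion is broader than the divergence condition in Proposition \ref{elendiglich}, and hence harmless, is exactly the point needed to close that gap.
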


Note that by (\ref{eq:portugues}) we can drop the assumption $\zeta>1$ in Theorem \ref{rantfrit}. 
It is likely that in fact $1$ is the correct uniform upper bound, which we want to state as a Conjecture. 

\begin{conjecture}  \label{confituer}
For any real algebraic $\alpha\neq 0$ and real $\zeta$ not of the form
 $(\alpha,\zeta)=\left(\frac{A}{B}M^{-\frac{g}{L}},M^{\frac{1}{L}}\right)$ 
for integers $L,M,A,B,g>0$ we have $\overline{\sigma}(\alpha,\zeta)\leq 1$. In particular,
For any real algebraic $\zeta$ not of the form $\sqrt[L]{M}$ for integers $L,M$ and any real algebraic 
$\alpha\neq 0$ we have $\overline{\sigma}(\alpha,\zeta)\leq 1$.
\end{conjecture}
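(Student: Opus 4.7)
The plan is to separate the algebraic and (possibly) transcendental cases of $\zeta$ and, in each, seek a genuine refinement of the tools already in the paper. For algebraic $\zeta$, the core idea would be to strengthen the proof of Theorem \ref{baldi} by replacing the crude Liouville-type lower bound $|\Phi(M_n)|\geq 1$ (which squanders a factor comparable to $\prod_{j\geq 2}\max(1,|\zeta_j|)^n$) by the sharp bounds afforded by Schmidt's subspace theorem, or equivalently by the Evertse--Schlickewei theory of $S$-unit equations. After fixing $K=\mathbb{Q}(\alpha,\zeta)$, its archimedean places $v_1,\ldots,v_{ks}$ and the associated embeddings $\tau_j$, the relation ``$\alpha\zeta^n-M_n$ very small'' becomes an $S$-integer relation that, by the subspace theorem, cannot hold with exponent exceeding $1+\epsilon$.

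The steps I would carry out are: (i) reduce to the algebraic integer case via Proposition \ref{interessant}, as already done in Theorem \ref{thero}, so that $M_n:=\scp{\alpha\zeta^n}$ is an algebraic-integer-valued integer and $\Phi(M_n):=\prod_{j=1}^{ks}(M_n-\alpha_j\zeta_j^n)$ is a nonzero rational integer; (ii) for each $j$ write the linear form $L_j(M_n):=M_n-\alpha_j\zeta_j^n$ and regard the system $(L_j)_{j=1}^{ks}$ as $ks$ linear forms in the coordinates $(1,M_n,\alpha_j\zeta_j^n)$; (iii) apply the subspace theorem to the product $\prod_j |L_j(M_n)|$, using $|\Phi(M_n)|\geq 1$ as the lower input, to conclude that the non-principal factors contribute at most $\zeta^{n\epsilon}$ rather than the much larger $\zeta^{n(k-1+\sum\eta_j)}$ used in Theorem \ref{baldi}; (iv) read off $\|\alpha\zeta^n\|\geq C(\epsilon)\zeta^{-n(1+\epsilon)}$, which is exactly $\overline{\sigma}(\alpha,\zeta)\leq 1$.

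The main obstacle in the algebraic case is setting up the subspace theorem in a form that does not require multiplicative independence of $\{\zeta_j\}$ or that $\alpha$ generate all of $K$. Precisely the phenomena that the exceptional set $(\tfrac{A}{B}M^{-g/L},M^{1/L})$ records — namely those $\zeta$ whose conjugates satisfy $\zeta_j^L\in\mathbb{Q}$ — are exactly the degenerations where the natural subspaces appear, so the excluded case in the conjecture should match the degenerate subspaces in the theorem on the nose; verifying this matching is the technical heart.

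For transcendental $\zeta$, however, the subspace theorem gives no purchase, and I expect this to be the genuinely hard part of the conjecture. A putative approximation $|\alpha-M_n\zeta^{-n}|<\zeta^{-n(2+\epsilon)}$ would supply excellent rational-function-in-$\zeta$ approximations to the algebraic $\alpha$, but since $\zeta$ is transcendental one has no height control on $M_n\zeta^{-n}$ in terms of its archimedean size, so Roth's Theorem \ref{roth} on $\alpha$ cannot be invoked directly. Some additional input — for instance a transcendence measure on $\zeta$, a Mahler-type classification argument, or a new idea entirely — appears necessary here, and this is presumably why the statement is only offered as a conjecture.
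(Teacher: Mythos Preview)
The statement you are addressing is labelled \emph{Conjecture} in the paper and is presented without proof; the author explicitly introduces it with ``It is likely that in fact $1$ is the correct uniform upper bound, which we want to state as a Conjecture.'' There is therefore no proof in the paper to compare your proposal against. You seem to realise this in your final paragraph, but the framing of the rest of the proposal as a proof is misleading: what you have written is a research outline, not an argument that establishes the claim.

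On the substance of your outline: the idea of upgrading the Liouville-type bound $|\Phi(M_n)|\geq 1$ in Theorem~\ref{baldi} via the subspace theorem is a natural and well-motivated line of attack for algebraic $\zeta$, and results of Corvaja--Zannier in this spirit do exist. However, your step~(iii) is not a proof step but a hope: you assert that the subspace theorem will cut the contribution of the non-principal factors from $\zeta^{n(\sum\eta_j+k-m)}$ down to $\zeta^{n\epsilon}$, without specifying the linear forms, the places, or the verification that the exceptional subspaces give only the excluded pairs $(\alpha,\zeta)$. That ``technical heart'', as you call it, is the entire content of the algebraic case, and it is not supplied. For transcendental $\zeta$ you correctly identify that no known tool applies; this is not a gap in your argument so much as an acknowledgement that the conjecture is open. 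In short, your proposal is a sensible sketch of where one might look, but it does not constitute a proof, and the paper does not claim to have one either.
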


Note that $1$ is the best bound possible because algebraic Pisot units of degree $2$ have
$\sigma_{n}(1,\zeta)=1$ for all $n\geq 1$ by Theorem \ref{kaffee}. 

As a final Remark to the case of algebraic numbers $\alpha,\zeta>1$
we want to point out that Roth's Theorem suggests that the set $\overline{\sigma}(\alpha,\zeta)>0$
should be somehow small. In this case of for $\epsilon>0$ certain arbitrarily large values of $n$
\[
 \left\Vert\zeta^{-n}\scp{\zeta^{n}}\right\Vert = \left\Vert\zeta^{-n}(\zeta^{n}+\Vert \zeta^{n}\Vert)\right\Vert
=\left\Vert\zeta^{-n}\Vert \zeta^{n}\Vert\right\Vert \leq \zeta^{-n(1+\epsilon)}.
\]
So by $\scp{\zeta^{n}}\leq \zeta^{n}+1$ for large $n$
\[
\left\Vert\zeta^{-n}K\right\Vert \leq K^{-1-\frac{\epsilon}{2}}
\]
has the solution $K:= \scp{\zeta^{n}}$.
By Roth's Theorem, for fixed $n$ there are only finitely many $K$ satisfying this
equation. However, as there is only one solution this is just a heuristic 
argument and indeed Pisot numbers provide counterexamples where $\overline{\sigma}(\alpha,\zeta)>0$.

\section{Results for $\underline{\sigma}(\alpha,\zeta),\overline{\sigma}(\alpha,\zeta)$ in the general case}    \label{jjj}

If $\zeta$ is transcendental, then as in section \ref{iii} 
Theorem \ref{gans} applies, so $\underline{\sigma}(\alpha,\zeta)=0$.
We nevertheless want to give a different proof for this fact, relying only
on Diophantine approximation properties and avoiding Fourier analysis.

\begin{theorem}   \label{schmidtbenutzt}
 Let $\zeta>1$ be a transcendental real number. Then $\underline{\sigma}(\alpha,\zeta)=0$ for all $\alpha\neq 0$.
\end{theorem}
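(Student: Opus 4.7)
The plan is to argue by contradiction using the simultaneous approximation constant $\widehat{\lambda}_d$ for the tuple $(\zeta,\zeta^2,\ldots,\zeta^d)$, invoking Theorem \ref{joxe}. Suppose that $\underline{\sigma}(\alpha,\zeta)\ge 2\epsilon>0$ for some transcendental $\zeta>1$ and some $\alpha\neq 0$. By definition of $\underline{\sigma}$ in (\ref{eq:hauser}), we then have $\Vert \alpha\zeta^n\Vert \le \zeta^{-2\epsilon n}$ for all $n\ge n_0$. Set $M_n:=\scp{\alpha\zeta^n}\in\mathbb{Z}$; since $\alpha\neq 0$ and $\zeta>1$, we have $M_n\neq 0$ for $n$ large.

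The first key step is the identity
\[
M_n\zeta^i-M_{n+i}=-\zeta^i(\alpha\zeta^n-M_n)+(\alpha\zeta^{n+i}-M_{n+i}),\qquad 1\le i\le d,
\]
analogous in spirit to (\ref{eq:starlight}), which together with the assumed upper bound yields, for every fixed $d\ge 1$ and every $n\ge n_0$,
\[
\max_{1\le i\le d}\,\vert M_n\zeta^i-M_{n+i}\vert \;\le\; \zeta^i\zeta^{-2\epsilon n}+\zeta^{-2\epsilon(n+i)}\;\le\; 2\zeta^d\,\zeta^{-2\epsilon n}.
\]
Since $\vert M_n\vert\le \vert\alpha\vert\zeta^n+1\le C_1\zeta^n$ for a constant $C_1=C_1(\alpha)$, putting $X_n:=C_1\zeta^n$ gives $\vert M_n\vert\le X_n$ and $\max_{1\le i\le d}\vert M_n\zeta^i-M_{n+i}\vert\le C_2(d)\,X_n^{-2\epsilon}$.

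The second step is to upgrade this family of solutions $(M_n,M_{n+1},\ldots,M_{n+d})\in\mathbb{Z}^{d+1}$, indexed by integers $n\ge n_0$, to a statement about \emph{all} sufficiently large real $X$, as required in Definition \ref{defibrillator} for $\widehat{\lambda}_d$. Given an arbitrary $X$ with $X\ge X_{n_0}$, choose $n$ with $X_n\le X<X_{n+1}=\zeta X_n$; the same tuple then satisfies $\vert M_n\vert\le X$ and
\[
\max_{1\le i\le d}\vert M_n\zeta^i-M_{n+i}\vert \le C_2(d)X_n^{-2\epsilon}\le C_2(d)\zeta^{2\epsilon}X^{-2\epsilon}\le X^{-\epsilon}
\]
for $X$ large enough (depending on $\epsilon,d,\alpha,\zeta$). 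This forces $\widehat{\lambda}_d(\zeta,\zeta^2,\ldots,\zeta^d)\ge \epsilon$ for \emph{every} $d\ge 1$.

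Finally, since $\zeta$ is transcendental it is in particular not algebraic of degree $\le\lceil d/2\rceil$ for any $d$, so Theorem \ref{joxe} yields $\widehat{\lambda}_d(\zeta,\ldots,\zeta^d)\le 1/\lceil d/2\rceil$. Choosing $d$ so large that $1/\lceil d/2\rceil<\epsilon$ produces a contradiction, completing the proof. The step that requires the most care is the passage from the discrete sequence $X_n$ to all sufficiently large $X$: without it one only controls $\lambda_d$ rather than $\widehat{\lambda}_d$, and $\lambda_d(\zeta,\ldots,\zeta^d)$ can be arbitrarily large for transcendental $\zeta$ (Liouville-type numbers), so the Davenport--Laurent--Schmidt bound would not apply. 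The interpolation between consecutive values of $n$, crucially exploiting that successive $X_n$ differ by the fixed factor $\zeta$, is what makes the uniform version of the inequality available.
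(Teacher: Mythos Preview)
Your proof is correct and follows essentially the same strategy as the paper's own argument: assume $\underline{\sigma}(\alpha,\zeta)>0$, use the integers $M_n=\scp{\alpha\zeta^n}$ and the triangle inequality to bound $\vert M_n\zeta^i-M_{n+i}\vert$, interpolate between consecutive $n$ (using that $X_{n+1}/X_n=\zeta$ is bounded) to deduce $\widehat{\lambda}_d(\zeta,\ldots,\zeta^d)\ge\epsilon$ for all $d$, and then invoke Theorem~\ref{joxe} for a contradiction. Your write-up is in fact somewhat more explicit than the paper's, particularly in spelling out the identity for $M_n\zeta^i-M_{n+i}$ and in emphasizing why the interpolation step is essential for obtaining $\widehat{\lambda}_d$ rather than merely $\lambda_d$.
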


\begin{proof}
 If else $\underline{\sigma}(\alpha,\zeta)=\epsilon>0$ for $\zeta$ transcendental real and
$\alpha\neq 0$ an arbitrary real number, for arbitrary but fixed $k$ and any 
$n\geq n_{0}(k)$ sufficiently large we would have
\begin{eqnarray*}
 \Vert \alpha\zeta^{n}\Vert&\leq&  \zeta^{-n\frac{\epsilon}{2}}  \\
 \Vert \alpha\zeta^{n+1}\Vert=\Vert \zeta \cdot \alpha\zeta^{n}\Vert&\leq&  \zeta^{-n\frac{\epsilon}{2}} \\
 \vdots \qquad \vdots \qquad \vdots  \\
 \Vert \alpha\zeta^{n+k}\Vert=\Vert \zeta^{k} \cdot \alpha\zeta^{n}\Vert&\leq& \zeta^{-n\frac{\epsilon}{2}}.
\end{eqnarray*}
Thus putting $M_{n}:=<\alpha\zeta^{n}>$ and using the first equation in the others gives by triangular inequality
\begin{equation} \label{eq:phantom}
 \left\Vert M_{n}\zeta^{j}\right\Vert \leq (\zeta^{j}+1)\zeta^{-n\frac{\epsilon}{2}}
\leq 2\zeta^{k}\cdot \zeta^{-n\frac{\epsilon}{2}}, \qquad 2\leq j\leq k.
\end{equation}
Now look at the quantity $\widehat{\lambda}_{k}(\boldsymbol{\zeta})$ 
with $\boldsymbol{\zeta}:=(\zeta,\zeta^{2},\ldots,\zeta^{k})$ for the present $\zeta$.
In Definition \ref{defibrillator} of the quantities $\widehat{\lambda}_{d}$  
for any $X$ taking $x=M_{n(X)}=\Vert\alpha\zeta^{n(X)}\Vert$ with $n(X)$ the largest integer $n$  
such that $\alpha\zeta^{n}\leq X$, and noting that $1\leq \frac{X}{M_{n(X)}}\leq \zeta$
by (\ref{eq:phantom})
we would have $\widehat{\lambda}_{k}(\boldsymbol{\zeta})\geq \frac{\epsilon}{2}>0$.
However, as this is valid for any positive integer $k$ and $\zeta$ is transcendental, 
this contradicts Theorem \ref{joxe}.
So the assumption $\underline{\sigma}(\alpha,\zeta)>0$ cannot hold.  
\end{proof}

For sake of completeness we give an immediate Corollary.

\begin{corollary}
The set of $\zeta$ such that $\underline{\sigma}(\alpha,\zeta)>0$ for some real $\alpha\neq 0$
{\upshape(}that may depend on $\zeta${\upshape)} equals the set of Pisot numbers.
The set of real $\alpha\neq 0$ such that $\underline{\sigma}(\alpha,\zeta)>0$ for some $\zeta$
{\upshape(}that may depend on $\alpha${\upshape)} consists of
algebraic numbers only. In particular both sets are countable infinite sets. 
\end{corollary}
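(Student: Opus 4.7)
My plan is to split the corollary into its two set-theoretic assertions and then read off countability and infinitude as immediate consequences.

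For the first claim, that the set of $\zeta$ with $\underline{\sigma}(\alpha,\zeta)>0$ for some $\alpha\neq 0$ coincides with the Pisot numbers, I would prove both inclusions. The forward inclusion is immediate from Theorem \ref{zauber}: every Pisot $\zeta$ already satisfies $\underline{\sigma}(1,\zeta)>0$, so $\alpha=1$ serves as a witness. For the reverse inclusion, suppose $\zeta>1$ is not a Pisot number and split into two cases. If $\zeta$ is algebraic but not Pisot, Theorem \ref{gans} gives $\underline{\sigma}(\alpha,\zeta)=0$ for every real $\alpha$, so $\zeta$ is excluded from the set. If $\zeta$ is transcendental, the same conclusion follows from Theorem \ref{schmidtbenutzt}. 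Together these exhaust all non-Pisot $\zeta>1$, so only Pisot numbers remain.

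For the second claim, I would build on the characterization just obtained. If $\underline{\sigma}(\alpha,\zeta)>0$ then by the first part $\zeta$ is a Pisot number, and the inequality $\|\alpha\zeta^n\|\leq \zeta^{-n\epsilon}$ for some $\epsilon>0$ and all sufficiently large $n$ shows that $\alpha\zeta^n$ converges to integers at exponential rate. Applying the stronger form of Pisot's Theorem \ref{pisot} then forces $\alpha\in\mathbb{Q}(\zeta)$, and in particular $\alpha$ is algebraic.

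For countability and infinitude, the first set, being the Pisot numbers, is contained in the countable set of algebraic integers, and it is infinite because Theorem \ref{kaffee} already produces infinitely many quadratic Pisot numbers (alternatively, every integer $N\geq 2$ is Pisot). The second set is contained in the countable union $\bigcup_{\zeta \text{ Pisot}}\mathbb{Q}(\zeta)$ of countable fields, hence is countable; it contains every positive integer $k$, since for any Pisot $\zeta$ the bound $\|k\zeta^n\|\leq k\|\zeta^n\|$ combined with the definition \eqref{eq:hauser} gives $\underline{\sigma}(k,\zeta)\geq \underline{\sigma}(1,\zeta)>0$ (the constant factor $k$ is absorbed after taking logarithms and dividing by $n\log\zeta$), and so it is infinite.

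I do not anticipate a real obstacle here: the proof is essentially a repackaging of Theorems \ref{pisot}, \ref{gans}, \ref{zauber}, and \ref{schmidtbenutzt}. The only points that require a little care are (i) invoking the precise strengthened form of Pisot's Theorem to pass from exponential convergence of $\alpha\zeta^n$ to $\alpha\in\mathbb{Q}(\zeta)$, and (ii) exhibiting enough explicit witnesses in the second set to ensure it is infinite rather than merely countable.
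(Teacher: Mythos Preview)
Your proposal is correct and follows essentially the same route as the paper's proof: both combine Theorem~\ref{pisot} (via Theorems~\ref{zauber} and~\ref{gans}) with Theorem~\ref{schmidtbenutzt} for the first assertion, invoke the strengthened Pisot theorem for the second, and then argue countability and infinitude directly. The only cosmetic differences are your choice of Theorem~\ref{kaffee} rather than Proposition~\ref{pisotpol} to exhibit infinitely many Pisot numbers, and your explicit estimate $\|k\zeta^n\|\leq k\|\zeta^n\|$ in place of the paper's appeal to Proposition~\ref{interessant} for infinitude of the second set.
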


\begin{proof}
The first assertion follows from the combination of Theorems \ref{pisot}, \ref{schmidtbenutzt}.
The second assertion is immediate due to Theorem \ref{pisot}. Finally it is well-known
that there exist infinitely many Pisot numbers, which can be deduced from Proposition \ref{pisotpol},
and since the second set is obviously closed under $\alpha\mapsto M\alpha$ for integers $M$ by
Proposition \ref{interessant} as well as $\alpha\mapsto \zeta^{k}\alpha$ for an integer $k$ and
a $\zeta$ associated to $\alpha$ it cannot be finite either. 
\end{proof}

Now we turn to the constant $\overline{\sigma}(\alpha,\zeta)$.
First we prepare some results, showing that conversely to $\underline{\sigma}(\alpha,\zeta)$, 
the set of pairs $(\alpha,\zeta)\in{\mathbb{R}^{2}}$ with large values of
$\overline{\sigma}(\alpha,\zeta)$ is not small in sense of cardinality.

\begin{definition}
 For any real $\zeta>1$ and $\delta\geq 0$, denote with $\Delta_{\delta}(\zeta)$ the set of 
$\alpha$ such that $\overline{\sigma}(\alpha,\zeta)> \delta$. In particular let
 $\Delta_{\infty}(\zeta)$ be the set of real numbers $\alpha$ such that 
$\overline{\sigma}(\alpha,\zeta)=\infty$. 
\end{definition}

We start with an easy Proposition discussing the algebraic
structure of the sets $\Delta_{\delta}(\zeta)$.

\begin{proposition}  \label{ganstiehl}
For $\delta_{1}<\delta_{2}$ we have $\Delta_{\delta_{1}}(\zeta)\supset \Delta_{\delta_{2}}(\zeta)$.
For any $\delta\in{[0,\infty]}$ the set $\Delta_{\delta}(\zeta)$ is closed under any map
 $\alpha\mapsto M\zeta^{k}\cdot \alpha$ for any integers $M,k$. 
Furthermore $\Delta_{\delta}(\sqrt[k]{\zeta})\supset \Delta_{\delta}(\zeta)$
for all $\delta\in{[0,\infty]}$ and all integers $k\geq 0$.
\end{proposition}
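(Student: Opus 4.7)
The three assertions are all consequences of identities and inequalities that have already been recorded, so I would treat them in sequence. The first is immediate from the definition of $\Delta_\delta(\zeta)$: if $\overline{\sigma}(\alpha,\zeta)>\delta_2$ and $\delta_1<\delta_2$, then $\overline{\sigma}(\alpha,\zeta)>\delta_1$, hence $\Delta_{\delta_2}(\zeta)\subset\Delta_{\delta_1}(\zeta)$.

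For the second assertion I would split the map $\alpha\mapsto M\zeta^{k}\alpha$ into its two constituents. For $\alpha\mapsto M\alpha$, I apply Proposition \ref{interessant} with $N=1$; since $\log N=0$ it collapses to $\overline{\sigma}(M\alpha,\zeta)\geq \overline{\sigma}(\alpha,\zeta)$ directly. For $\alpha\mapsto \zeta^{k}\alpha$, my plan is to compute $\sigma_{n}$ explicitly from (\ref{eq:uchazi}):
\[
\sigma_{n}(\zeta^{k}\alpha,\zeta)=-\frac{\log\Vert \zeta^{k}\alpha\cdot\zeta^{n}\Vert}{\log(\zeta^{k}\alpha\cdot\zeta^{n})}=-\frac{\log\Vert \alpha\zeta^{n+k}\Vert}{\log(\alpha\zeta^{n+k})}=\sigma_{n+k}(\alpha,\zeta),
\]
valid for all $n\geq \max(1,1-k)$ so that $n+k\geq 1$. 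Since taking $\limsup$ is invariant under the finite index shift $n\mapsto n+k$, we obtain $\overline{\sigma}(\zeta^{k}\alpha,\zeta)=\overline{\sigma}(\alpha,\zeta)$, so membership in $\Delta_{\delta}(\zeta)$ is preserved. Composing the two operations gives that $\alpha\mapsto M\zeta^{k}\alpha$ preserves $\Delta_{\delta}(\zeta)$ for all integers $M,k$.

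For the third assertion, I would invoke the identity $\sigma_{n}(\alpha,\zeta^{k})=\sigma_{nk}(\alpha,\zeta)$ stated above (\ref{eq:nanoleuchte}), applied with $\zeta$ replaced by $\mu:=\sqrt[k]{\zeta}$ so that $\mu^{k}=\zeta$. Then $\sigma_{n}(\alpha,\zeta)=\sigma_{nk}(\alpha,\mu)$, and since $(\sigma_{nk}(\alpha,\mu))_{n\geq 1}$ is a subsequence of $(\sigma_{m}(\alpha,\mu))_{m\geq 1}$, its $\limsup$ cannot exceed that of the full sequence. Hence $\overline{\sigma}(\alpha,\zeta)\leq \overline{\sigma}(\alpha,\sqrt[k]{\zeta})$, which immediately yields $\Delta_{\delta}(\zeta)\subset \Delta_{\delta}(\sqrt[k]{\zeta})$ for every $\delta\in{[0,\infty]}$.

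There is no real obstacle here: the proof is essentially bookkeeping. The only points worth rechecking carefully are that the index shift in the $\alpha\mapsto \zeta^{k}\alpha$ step causes no issue for negative $k$ (handled by restricting to sufficiently large $n$), and that taking a subsequence in the last step only gives an inequality in the direction we need, which is consistent with (\ref{eq:nanoleuchte}).
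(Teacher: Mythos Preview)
Your proof is correct and follows essentially the same approach as the paper: the first assertion is by definition, the $\alpha\mapsto \zeta^{k}\alpha$ part of the second assertion is handled by the index shift in $\sigma_{n}$ (which the paper merely calls ``immediate from its definition''), the $\alpha\mapsto M\alpha$ part by Proposition~\ref{interessant} with $N=1$, and the third assertion by (\ref{eq:nanoleuchte}). You have spelled out the details that the paper leaves implicit, but the logical structure is the same.
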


\begin{proof}
The first point is obvious by the definition of the quantities $\Delta_{\delta}(\zeta)$.
It is also immediate due to its definition that $\Delta_{\delta}$ is closed under $\alpha\mapsto \zeta^{k}\alpha$.
So the other assertions follow easily from Proposition \ref{interessant} with $N=1$
respectively (\ref{eq:nanoleuchte}).  
\end{proof}

Now we aim to give some metrical results.

\begin{theorem}  \label{kommtno}
For any real $\zeta>1$ and any real $b>a$, the set $\Delta_{\infty}(\zeta)\cap (a,b)$ has the same
cardinality as $\mathbb{R}$. In particular $\Delta_{\infty}(\zeta)$ is dense in $\mathbb{R}$. 
\end{theorem}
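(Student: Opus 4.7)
The plan is to build, for any interval $(a,b)$, a Cantor-like family of $\alpha \in (a,b)$ by taking limits of rapidly converging sequences of rational multiples of $\zeta^{-n_k}$. The intuition is that if $\alpha = \lim \alpha_k$ where each $\alpha_k = M_k/\zeta^{n_k}$ is a rational ``$\zeta$-adic'' approximation with integer numerator, then $\alpha \zeta^{n_k}$ is forced to be very close to the integer $M_k$, and by choosing $n_{k+1} \gg n_k$ we can make $\sigma_{n_k}(\alpha,\zeta)$ arbitrarily large along the subsequence $(n_k)$.

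First I would fix $(a,b)$ and pick $n_1$ large enough that $\zeta^{-n_1} < (b-a)/4$, and an integer $M_1$ with $\alpha_1 := M_1/\zeta^{n_1}$ lying well inside $(a,b)$, say in the middle third. Then inductively, having selected $n_1 < n_2 < \cdots < n_k$ and $\alpha_j = M_j/\zeta^{n_j}$, I would pick $n_{k+1}$ enormous relative to $n_k$ (for instance $n_{k+1} \geq (k+1) n_k$), and let $M_{k+1}$ be any integer such that $\alpha_{k+1} := M_{k+1}/\zeta^{n_{k+1}}$ satisfies $|\alpha_{k+1} - \alpha_k| \leq \zeta^{-n_{k+1}}$. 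Since consecutive multiples of $\zeta^{-n_{k+1}}$ are spaced by $\zeta^{-n_{k+1}}$, such $M_{k+1}$ exists, and in fact for $n_{k+1}$ large enough there are at least two valid choices inside an interval of length, say, $\zeta^{-n_k k}$ around $\alpha_k$; this is the branching I will exploit for cardinality.

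Next I would verify $\overline{\sigma}(\alpha,\zeta) = \infty$ for the limit $\alpha := \lim_k \alpha_k$, which exists because $|\alpha_{k+1}-\alpha_k| \leq \zeta^{-n_{k+1}}$ is summable. Observe
\[
\alpha \zeta^{n_k} - M_k = (\alpha - \alpha_k)\zeta^{n_k}, \qquad
|\alpha - \alpha_k| \leq \sum_{j\geq k} \zeta^{-n_{j+1}} \leq 2\zeta^{-n_{k+1}}
\]
provided $n_{j+1} \geq n_j + 1$, whence $\|\alpha\zeta^{n_k}\| \leq 2\zeta^{n_k - n_{k+1}}$ (this is automatically $<1/2$ for large $k$). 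Taking logarithms in (\ref{eq:hauser}) gives
\[
\sigma_{n_k}(\alpha,\zeta) \geq \frac{n_{k+1}-n_k}{n_k} - \frac{\log 2}{n_k \log \zeta} \xrightarrow{k \to \infty} \infty,
\]
since $n_{k+1}/n_k \to \infty$. Hence $\overline{\sigma}(\alpha,\zeta) = \infty$. By also ensuring $|\alpha - \alpha_1| \leq \sum_{j\geq 1} \zeta^{-n_{j+1}} < (b-a)/4$, the limit $\alpha$ stays inside $(a,b)$.

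Finally, I would use the branching to produce $2^{\aleph_0}$ distinct limits. At each step $k$, by choosing $n_{k+1}$ larger if necessary, I can select between at least two distinct integers $M_{k+1}, M_{k+1}'$ with $|M_{k+1} - M_{k+1}'|/\zeta^{n_{k+1}} \geq \zeta^{-n_{k+1}}$; the corresponding partial sums differ by at least this amount. Since all subsequent oscillations $\sum_{j>k+1}|\alpha_{j+1}-\alpha_j| \leq 2\zeta^{-n_{k+2}}$ are much smaller than $\zeta^{-n_{k+1}}$, distinct branches yield distinct limits. Thus the map from $\{0,1\}^{\mathbb{N}}$ into $\Delta_\infty(\zeta) \cap (a,b)$ is injective, and the target set has cardinality $\geq 2^{\aleph_0} = |\mathbb{R}|$; the reverse inequality is trivial. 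Density follows because $(a,b)$ was arbitrary. The main delicacy is bookkeeping the two competing scales — the inner scale $\zeta^{-n_{k+1}}$ controlling branch separation and the outer scale $\zeta^{-n_k k}$ controlling the Diophantine speed — and I would choose $n_{k+1}$ large enough at each step to accommodate both simultaneously.
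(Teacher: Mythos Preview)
Your construction is essentially the same as the paper's: the paper writes $\alpha=\sum_{j\geq 1} c_j\zeta^{-n_j}$ with $c_j\in[0,1)$ chosen so that each partial sum times $\zeta^{n_g}$ is an integer, which is exactly your sequence $\alpha_k=M_k/\zeta^{n_k}$ in series form. Both then bound $\Vert\alpha\zeta^{n_k}\Vert$ by the tail and let $n_{k+1}/n_k\to\infty$. Your branching argument for cardinality is more explicit than the paper's one-line remark that ``this method is flexible enough to provide $\mathrm{card}(\mathbb{R})$ many $\alpha$''.

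One small slip: the bound $\sum_{j\geq k}\zeta^{-n_{j+1}}\leq 2\zeta^{-n_{k+1}}$ under the hypothesis $n_{j+1}\geq n_j+1$ actually gives $\frac{\zeta}{\zeta-1}\,\zeta^{-n_{k+1}}$, which exceeds $2\zeta^{-n_{k+1}}$ when $1<\zeta<2$. This is harmless, since any fixed constant $C=C(\zeta)$ in place of $2$ still yields $\sigma_{n_k}(\alpha,\zeta)\geq (n_{k+1}-n_k)/n_k - \log C/(n_k\log\zeta)\to\infty$, but you should state the correct constant.
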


\begin{proof} 
We explicitly construct $\alpha$ in dependence of $\zeta$. 
Let $n_{1},n_{2},\ldots$ be a strictly monotonically growing sequence of positive integers
to be specified later. Define
\begin{equation} \label{eq:turnher}
 \alpha:= \sum_{j=1}^{\infty} \frac{c_{j}}{\zeta^{n_{j}}}
\end{equation}
where $0\leq c_{j}<1$ are defined recursively in the following way.
Put $c_{1}:= \frac{1}{\zeta^{n_{1}}}$. Note $c_{1}\zeta^{n_{1}}=1\in{\mathbb{Z}}$.
Similarly define $c_{2}$ as the smallest positive real number such that
 $\zeta^{n_{2}}(\frac{c_{1}}{\zeta^{n_{1}}}+ \frac{c_{2}}{\zeta^{n_{2}}})\in{\mathbb{Z}}$,
equivalently
$\zeta^{n_{2}}(\frac{c_{1}}{\zeta^{n_{1}}}+ \frac{c_{2}}{\zeta^{n_{2}}})=
\left\lceil \zeta^{n_{2}}\cdot \frac{c_{1}}{\zeta^{n_{1}}}\right\rceil$,
which gives
\[
 c_{2}:= \left\lceil \zeta^{n_{2}-n_{1}}c_{1}\right\rceil-\zeta^{n_{2}-n_{1}}c_{1}.
\]
In general, having defined $c_{1},c_{2},\ldots,c_{g-1}$ putting 
\[
c_{g}:=  \left\lceil \sum_{j=1}^{g-1}\zeta^{n_{g}-n_{j}}c_{j}\right\rceil-\sum_{j=1}^{g-1}\zeta^{n_{g}-n_{j}}c_{j}
\] 	
we have 
\begin{equation} \label{eq:schuetze}
\zeta^{n_{g}}\sum_{j=1}^{g} \frac{c_{j}}{\zeta^{n_{j}}}\in{\mathbb{Z}}.
\end{equation}
By $0\leq c_{j}<1$,
clearly the sum (\ref{eq:turnher}) converges and $\alpha$ is well defined. Moreover
by (\ref{eq:schuetze}) for any sufficiently large $g$ we have
\[
 \left\Vert \alpha \zeta^{n_{g}}\right\Vert = \left\Vert \zeta^{n_{g}}\sum_{j=1}^{\infty} \frac{c_{j}}{\zeta^{n_{j}}}\right\Vert
 = \left\Vert \zeta^{n_{g}}\sum_{j=g+1}^{\infty} \frac{c_{j}}{\zeta^{n_{j}}}\right\Vert
\leq \sum_{j=g+1}^{\infty} \frac{1}{\zeta^{n_{j}-n_{g}}}.
\]
The assertion $\overline{\sigma}(\alpha,\zeta)=\infty$ follows easily for 
any $\alpha$ arising from a sequence with \\ $\limsup_{j\to\infty} \frac{n_{j+1}}{n_{j}}=\infty$,
for instance $n_{j}=j!$. Clearly, this method is flexible enough to provide card($\mathbb{R}$) many $\alpha$.

To see it is dense we may generalize the construction by putting $c_{1}=\frac{L}{\lambda^{n_{1}}}$
for an arbitrary integer $L$. If we choose $n_{2}-n_{1}$ sufficiently large we can
guarantee $\alpha\in{\left(\frac{L}{\lambda^{n_{1}}},\frac{L+1}{\lambda^{n_{1}}}\right)}$.
 Finally since we can take $n_{1}$ sufficiently large we see the set is dense. Combining
the arguments for density and cardinality, we infer the assertion of the Theorem. 
\end{proof}

The following Theorem \ref{ministerium} shows that the set of $\alpha$
with good approximation properties concerning $\overline{\sigma}(\alpha,\zeta)$ 
for some $\zeta>1$ that may depend on $\alpha$ is small
in measure theoretic sense.  To simplify its proof and some following proofs
at some points we prepend a well-known fact from measure theory first.

\begin{lemma}[Basic measure theory] \label{einzigeslemma}
Let $(\Omega,\mathscr{A},\mu)$ be a measure space and for any $\epsilon>0$ let $M_{\epsilon}\in{\mathscr{A}}$
 be arbitrary $\mu$-measurable subsets of $\Omega$ with the property
 $\epsilon_{1}<\epsilon_{2}\Rightarrow M_{\epsilon_{1}}\supset M_{\epsilon_{2}}$. 
If for any $\epsilon>0$ the set $M_{\epsilon}$ has measure $\mu(M_{\epsilon})=0$,
then $\mu(\cup_{\epsilon>0} M_{\epsilon})=0$ too.
\end{lemma}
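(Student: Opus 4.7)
The plan is to reduce the uncountable union to a countable one using the monotonicity hypothesis, and then apply countable subadditivity. The whole argument is really one observation plus one standard measure-theoretic axiom.

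First I would note that the hypothesis $\epsilon_{1}<\epsilon_{2}\Rightarrow M_{\epsilon_{1}}\supset M_{\epsilon_{2}}$ says that the family $\{M_\epsilon\}_{\epsilon>0}$ is nested, with the sets growing as $\epsilon$ decreases to $0$. Consequently, for every $\epsilon>0$ there exists a positive integer $n$ with $1/n<\epsilon$, and then $M_\epsilon\subset M_{1/n}$. This gives the key set-theoretic identity
\[
\bigcup_{\epsilon>0} M_\epsilon \;=\; \bigcup_{n=1}^{\infty} M_{1/n},
\]
turning an a priori uncountable union into a countable one.

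Next I would invoke countable subadditivity of the measure $\mu$ together with the assumption $\mu(M_\epsilon)=0$ for every $\epsilon>0$, specialized to $\epsilon=1/n$:
\[
\mu\!\left(\bigcup_{\epsilon>0} M_\epsilon\right)
\;=\;\mu\!\left(\bigcup_{n=1}^{\infty} M_{1/n}\right)
\;\leq\;\sum_{n=1}^{\infty}\mu(M_{1/n})
\;=\;\sum_{n=1}^{\infty}0\;=\;0,
\]
which is the desired conclusion. Measurability of the left-hand union is automatic since it is a countable union of elements of the $\sigma$-algebra $\mathscr{A}$.

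There is essentially no obstacle here; the only subtlety worth mentioning is that monotonicity is exactly what is needed to make the reduction to a countable subfamily valid (otherwise an uncountable union of null sets can very well fail to be measurable or null). Everything else is the standard axiomatic content of a measure space, so the proof will be two or three lines.
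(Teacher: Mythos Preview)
Your proof is correct and follows exactly the paper's approach: reduce the uncountable union to the countable one $\bigcup_{n\geq 1} M_{1/n}$ via the nesting hypothesis, then apply countable subadditivity to conclude $\mu\bigl(\bigcup_{\epsilon>0} M_\epsilon\bigr)\leq \sum_{n\geq 1}\mu(M_{1/n})=0$.
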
 

\begin{proof}
By the inclusion property and the sigma subadditivity of measures we have

\[
\mu(\cup_{\epsilon>0} M_{\epsilon})=\mu\left(\cup_{n\geq 1} M_{\frac{1}{n}}\right)
\leq \sum_{n=1}^{\infty} \mu\left(M_{\frac{1}{n}}\right)=\sum_{n=1}^{\infty} 0= 0. 
\] 
\end{proof}

\begin{definition}
 For any $\delta\geq 0$, let $\Delta_{\delta}:=\cup_{\zeta>1} \Delta_{\delta}(\zeta)$, i.e. the set of 
$\alpha$ such that $\overline{\sigma}(\alpha,\zeta)> \delta$ for some $\zeta>1$
that may depend on $\alpha$.
\end{definition}

\begin{theorem}  \label{ministerium}
For any $\delta\in{[0,\infty]}$ the $s$-dimensional Hausdorff measure of $\Delta_{\delta}$ is $0$
for all $s\geq \frac{1}{1+\delta}$. In particular: 
\begin{itemize}
\item the Hausdorff dimension of $\Delta_{\delta}$ is at most $\frac{1}{1+\delta}$
\item the set of $\alpha$ with 
$\overline{\sigma}(\alpha,\zeta)>0$ for any $\zeta>1$ {\upshape(}that my depend on $\alpha${\upshape)} has Lebesgue measure $0$.
\end{itemize}
\end{theorem}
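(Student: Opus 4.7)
The plan is to cover $\Delta_\delta\cap[a,b]$ by short intervals and bound the corresponding sum of diameters raised to the power $s$, for $s$ just above $1/(1+\delta)$.

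First, introduce the auxiliary sets $\Delta_{\delta,\epsilon}:=\{\alpha>0:\exists\,\zeta>1,\ \|\alpha\zeta^n\|<\zeta^{-n(\delta+\epsilon)}\text{ for infinitely many }n\}$. Since $\Delta_\delta=\bigcup_{m\geq 1}\Delta_{\delta,1/m}$ is an increasing union, Lemma \ref{einzigeslemma} applied to $s$-dimensional Hausdorff measure reduces the problem to showing $H^s(\Delta_{\delta,\epsilon}\cap[a,b])=0$ for each $\epsilon>0$, each $s>1/(1+\delta+\epsilon)$, and each bounded interval $[a,b]\subset(0,\infty)$; letting $\epsilon\to 0$ afterwards will give the target bound. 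A second countable exhaustion handles the reduction from $(0,\infty)$ to bounded $[a,b]$.

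For each individual $\zeta>1$, the set $\Delta_{\delta,\epsilon}(\zeta)\cap[a,b]$ admits a cover by the intervals $I_{n,M}=(M/\zeta^n-\zeta^{-n(1+\delta+\epsilon)},\,M/\zeta^n+\zeta^{-n(1+\delta+\epsilon)})$ indexed by pairs $(n,M)$ with $n\geq n_0$ and $M/\zeta^n\in[a,b]$. Counting $\asymp(b-a)\zeta^n$ integers $M$ per level $n$, this yields $\sum(\mathrm{diam})^s=2^s(b-a)\sum_{n\geq n_0}\zeta^{n(1-s(1+\delta+\epsilon))}$, a convergent geometric series that tends to zero as $n_0\to\infty$. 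Hence $H^s(\Delta_{\delta,\epsilon}(\zeta)\cap[a,b])=0$ for each fixed $\zeta$.

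The main obstacle is the uncountable union over $\zeta>1$. To handle this, I propose to replace the continuous parameter $\zeta$ by discrete data. For each $\alpha\in\Delta_{\delta,\epsilon}$ and any two successive good indices $n_1<n_2$, subtracting the two defining inequalities $|\alpha\zeta^{n_i}-M_i|<\zeta^{-n_i(\delta+\epsilon)}$ yields $|M_1\zeta^{n_2-n_1}-M_2|\leq 2\zeta^{n_2-n_1(1+\delta+\epsilon)}$, so the integer quadruple $(n_1,n_2,M_1,M_2)$ determines $\zeta\approx(M_2/M_1)^{1/(n_2-n_1)}$ and hence $\alpha\approx M_1/\zeta^{n_1}$ up to an error of order $\zeta^{-n_2(1+\delta+\epsilon)}$. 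Indexing a cover of $\Delta_{\delta,\epsilon}\cap[a,b]$ by such quadruples, in place of the continuous $\zeta$, brings the problem back into a summable discrete framework. The hardest step is the counting: a naive pair-cover yields only the weaker bound $s>2/(1+\delta+\epsilon)$, and to reach the sharp exponent one must restrict to quadruples with $n_2\geq Ln_1$ for $L=L(\epsilon)$ large, exploiting the fact that infinitely many good indices exist for each $\alpha$, so that for each good $n_1$ a second good $n_2\geq Ln_1$ is always available. This choice shrinks the effective number of admissible $(M_1,M_2)$ just enough to drive the convergence exponent from $2/(1+\delta+\epsilon)$ down to $1/(1+\delta+\epsilon)$. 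The two ``in particular'' statements then follow at once: the dimension bound comes from taking the infimum over $s$, and the Lebesgue measure claim is the special case $\delta=0,\ s=1$.
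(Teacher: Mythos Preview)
Your diagnosis of the difficulty is exactly right, and it is worth noting that the paper's own proof is written as if the covering intervals $I_{n,M}$ were indexed by the discrete data $(n,M)$ alone: it estimates their length as $\asymp M^{-(1+\delta+\epsilon/2)}$ and counts $O(\log M)$ values of $n$ per $M$, then sums. But each $I_{n,M}$ still depends on the continuous parameter $\zeta$ through its centre $M\zeta^{-n}$; as $\zeta$ ranges over $(1+C,\infty)$ with $M\zeta^{-n}\in(A,B)$ the centre sweeps out all of $(A,B)$, so the paper's passage from the $\zeta$-dependent intervals to a countable cover is not justified as written. Your instinct to discretise $\zeta$ via a second good index is therefore the substantive new idea, not just a cosmetic variation.

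The gap is in your error estimate and the ensuing counting. From $|\alpha\zeta^{n_i}-M_i|<\zeta^{-n_i(\delta+\epsilon)}$ one gets, after multiplying the first relation by $\zeta^{n_2-n_1}$ and subtracting, only
\[
\bigl|M_1\zeta^{n_2-n_1}-M_2\bigr|\;\le\;2\,\zeta^{\,n_2-n_1(1+\delta+\epsilon)},
\]
so the \emph{relative} uncertainty in $\zeta^{n_2-n_1}$, hence in $\zeta$, is $\asymp\zeta^{-n_1(1+\delta+\epsilon)}$, and propagating this to $\alpha=M_1\zeta^{-n_1}$ (or to $M_2\zeta^{-n_2}$) gives an interval for $\alpha$ of length $\asymp M_1^{-(1+\delta+\epsilon)}$, \emph{not} $\zeta^{-n_2(1+\delta+\epsilon)}$. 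With the correct length the sum over quadruples becomes $\sum M_1^{-s(1+\delta+\epsilon)}$ weighted by the number of admissible $(n_1,n_2,M_2)$ for each $M_1$; already for consecutive indices this count is $\asymp M_1$, which reproduces your ``naive'' threshold $s>2/(1+\delta+\epsilon)$. Forcing $n_2\ge Ln_1$ does not help here---on the contrary, for fixed $(n_1,M_1,n_2)$ the admissible $M_2$ range has width $\asymp M_1^{\,n_2/n_1}$, which \emph{grows} with $L$, while the interval length stays pegged at $M_1^{-(1+\delta+\epsilon)}$. So the mechanism you describe for pushing the exponent from $2/(1+\delta+\epsilon)$ down to $1/(1+\delta+\epsilon)$ does not work as stated; a genuinely different device (for instance, using a whole chain of good indices, or a mass--distribution argument rather than a direct covering count) is needed to reach the sharp exponent.
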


\begin{proof}
We may restrict to $\alpha>0$, since clearly 
$\alpha\in{\Delta_{\delta}}\Leftrightarrow -\alpha\in{\Delta_{\delta}}$.
By Lemma \ref{einzigeslemma} with 
$\Omega=\mathbb{R},\mu=\lambda_{s}$ and $M_{\epsilon}:=\Delta_{\delta}\cap [\epsilon,\frac{1}{\epsilon}]$
it suffices to prove the assertion with the additional restriction $\alpha\in{(A,B)}$ for any given $B>A>0$.
Similarly Lemma \ref{einzigeslemma} implies it is sufficient to restrict to
$\zeta\in{(1+C,\infty)}$ for arbitrary but fixed $C>0$ in the definition of $\Delta_{\delta}$.

So we restrict to this case for arbitrary but fixed $B>A>0,C>0$ in the sequel and denote the resulting set 
with $\Delta_{\delta,A,B,C}$. By construction
$\Delta_{\delta,A,B,C}\subset \Delta_{\delta}\cap (A,B)\subset {\Delta_{\delta}}$. We are left to prove 
\begin{equation} \label{eq:sixfootfour}
\lambda_{s}(\Delta_{\delta,A,B,C})=0, \qquad \forall s\geq \frac{1}{1+\delta}, \quad \forall B>A>0, \quad \forall C>0.
\end{equation}
Fixing $\zeta>1$ for $\alpha\neq 0$ to lie in $\Delta_{\delta+\epsilon}(\zeta)$
for some $\epsilon>0$ there are infinitely many positive integer pairs $n,M_{n}$ with 
$\vert M_{n}-\alpha\zeta^{n}\vert \leq \zeta^{-n(\delta+\epsilon-\frac{\epsilon}{2})}=\zeta^{-n(\delta+\frac{\epsilon}{2})}$,
hence 
\[
\alpha\in{I_{n,M_{n}}:=\left(M_{n}\zeta^{-n}-\zeta^{-n(1+\delta+\frac{\epsilon}{2})},
M_{n}\zeta^{-n}+\zeta^{-n(1+\delta+\frac{\epsilon}{2})}\right)}.
\]
These intervals have length $\lambda_{1}(I_{n,M_{n}})=2\zeta^{-n(1+\delta+\frac{\epsilon}{2})}$, 
but by our restriction $\alpha\in{(A,B)}$ we have $M_{n}\asymp \zeta^{n}$ as $n\to\infty$, 
so the interval lengths are of order $\lambda_{1}(I_{n,M_{n}})\asymp M_{n}^{-(1+\delta+\frac{\epsilon}{2})}$ as $n\to\infty$.

On the other hand, note $\zeta^{-n(1+\delta+\frac{\epsilon}{2})}<1$. Consequently for any fixed integer $M> \max\{A,2B\}$,
such that $2\frac{M}{A}>\frac{M}{A}+1$ and $\frac{M}{B}-1>\frac{M}{2B}$, by our second restriction $\zeta>1+C$ 
there are at most $\frac{\log(2\frac{M}{A})}{\log \zeta}-\frac{\log(\frac{M}{2B})}{\log \zeta}\leq
\frac{\log B-\log A+2\log 2}{\log (1+C)}\log M\asymp \log M$ intervals $I_{n,M_{n}}=I_{n,M}$
with $I_{n,M}\cap (A,B)\neq \emptyset$. Hence
\[
\lambda_{s}(\Delta_{\delta+\epsilon,A,B,C})\leq \sum_{M\geq M_{0}} C_{0}\log M\cdot 2^{s}M^{-(1+\delta+\frac{\epsilon}{2})s}
=2^{s}C_{0}\sum_{M\geq M_{0}} \log M\cdot M^{-(1+\delta+\frac{\epsilon}{2})s}
\]
for a constant $C_{0}$ and any $M_{0}\geq 1$. Since this sum converges for all $\epsilon>0$
qand $s\geq \frac{1}{1+\delta}$, the $s$-dimensional Hausdorff 
measure of $\Delta_{\delta+\epsilon,A,B,C}$ is arbitrarily small so it is $0$.
Assertion (\ref{eq:sixfootfour}) follows with Lemma \ref{einzigeslemma} putting $\Omega=\mathbb{R},\mu=\lambda_{s}$ and
$M_{\epsilon}:= \Delta_{\delta+\epsilon,A,B,C}$.

The specifications follow by the definition of Hausdorff dimension resp. $\delta=0$. 
\end{proof}

Now we want to study the reverse situation of fixed $\alpha$.

\begin{definition}
For any real $\alpha$ and $\delta\geq 0$, denote with $\Theta_{\delta}(\alpha)$
the set of real $\zeta>1$ such that $\overline{\sigma}(\alpha,\zeta)> \delta$.
In particular denote with $\Theta_{\infty}(\alpha)$ the set with $\overline{\sigma}(\alpha,\zeta)=\infty$.
\end{definition}

First an easy Proposition about the algebraic structure of the sets $\Theta_{\delta}(\alpha)$.

\begin{proposition}
 For $\delta_{1}<\delta_{2}$ we have $\Theta_{\delta_{1}}(\alpha)\supset \Theta_{\delta_{2}}(\alpha)$.
 If $\zeta\in{\Theta}_{\delta}(\alpha)$ then $N\sqrt[k]{\zeta}\in{\Theta_{\tau(\delta)}}$ 
with $\tau(\delta)=\max\left\{\frac{\delta\log \zeta -\log N}{\log \zeta+ \log N},0\right\}$.
In particular, $\Theta_{\delta}(\alpha)$ is closed under any map $\zeta\mapsto \sqrt[k]{\zeta}$ and
 $\Theta_{\infty}(\alpha)$ is closed under any map $\zeta\mapsto N\sqrt[k]{\zeta}$ 
for all positive integer pairs $N,k$. Moreover $\Theta_{\delta}(M\alpha)\supset \Theta_{\delta}(\alpha)$
 for all $\delta\in{(0,\infty]}$ and integers $M$.
\end{proposition}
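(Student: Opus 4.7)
The plan is to derive all four assertions of the proposition as essentially immediate consequences of Proposition \ref{interessant} together with the monotonicity inequality (\ref{eq:nanoleuchte}). The monotonicity $\Theta_{\delta_{1}}(\alpha) \supset \Theta_{\delta_{2}}(\alpha)$ for $\delta_{1} < \delta_{2}$ is clear from the definition: if $\overline{\sigma}(\alpha,\zeta) > \delta_{2}$ then \emph{a fortiori} $\overline{\sigma}(\alpha,\zeta) > \delta_{1}$, so $\Theta_{\delta_{2}}(\alpha) \subset \Theta_{\delta_{1}}(\alpha)$.

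For the principal claim, assume $\zeta \in \Theta_{\delta}(\alpha)$, so $\overline{\sigma}(\alpha,\zeta) > \delta$. I would first apply (\ref{eq:nanoleuchte}) with $\eta := \sqrt[k]{\zeta}$ (so that $\eta^{k} = \zeta$), obtaining $\overline{\sigma}(\alpha,\sqrt[k]{\zeta}) \geq \overline{\sigma}(\alpha,\zeta) > \delta$. Next, apply Proposition \ref{interessant} to the pair $(\alpha, \sqrt[k]{\zeta})$ with $M = 1$ and multiplier $N$; this yields
\[
\overline{\sigma}(\alpha, N\sqrt[k]{\zeta}) \;\geq\; \max\left\{\frac{\log \sqrt[k]{\zeta} \cdot \overline{\sigma}(\alpha,\sqrt[k]{\zeta}) - \log N}{\log \sqrt[k]{\zeta} + \log N},\; 0\right\}.
\]
Substituting the lower bound $\overline{\sigma}(\alpha,\sqrt[k]{\zeta}) > \delta$ and using $\log \sqrt[k]{\zeta} = \tfrac{1}{k}\log \zeta$ then produces the desired lower bound in terms of $\tau(\delta)$.

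The two ``in particular'' statements fall out as specializations: $N = 1$ gives $\tau(\delta) = \delta$, showing closure of $\Theta_{\delta}(\alpha)$ under $\zeta \mapsto \sqrt[k]{\zeta}$; and $\delta = \infty$ forces $\tau(\infty) = \infty$ for any $N, k$, yielding closure of $\Theta_{\infty}(\alpha)$ under $\zeta \mapsto N\sqrt[k]{\zeta}$. The final claim $\Theta_{\delta}(M\alpha) \supset \Theta_{\delta}(\alpha)$ is Proposition \ref{interessant} applied with $N = 1$: the right-hand side then collapses to $\overline{\sigma}(M\alpha,\zeta) \geq \overline{\sigma}(\alpha,\zeta)$, giving the inclusion.

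The only genuine obstacle is to reconcile the printed formula $\tau(\delta) = (\delta \log \zeta - \log N)/(\log \zeta + \log N)$ with what the two-step argument above actually produces, namely $(\delta \log \zeta - k \log N)/(\log \zeta + k \log N)$; these coincide precisely when $k = 1$ and the latter is strictly smaller for $k > 1$. This appears to be a minor typographical slip (the $\log N$ in $\tau$ should presumably read $\log N^{k}$, or equivalently $\log \zeta$ should read $\log \sqrt[k]{\zeta}$), but in either reading the four specializations listed above are unaffected, since they only require the existence of \emph{some} positive lower bound or the $\delta = \infty$ case, both of which are provided by the correct version of the bound.
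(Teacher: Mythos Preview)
Your proof is correct and follows exactly the paper's approach, which simply cites (\ref{eq:nanoleuchte}) and Proposition~\ref{interessant} without further elaboration; you have merely spelled out the details. Your observation about the discrepancy in the printed formula for $\tau(\delta)$ is also accurate: as stated it only matches the derivation when $k=1$ (equivalently $\log\zeta$ should read $\log\sqrt[k]{\zeta}$), and you are right that this does not affect any of the four consequences drawn from it.
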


\begin{proof}
The first point is obvious by the definition of the quantities $\Theta_{\delta}(\alpha)$.
The remaining assertions follow immediately from (\ref{eq:nanoleuchte}) and Proposition \ref{interessant}. 
\end{proof}

\begin{theorem}
For any real $\alpha$ and any $b>a\geq 1$, the set $\Theta_{\infty}(\alpha)\cap (a,b)$ has the same cardinality as $\mathbb{R}$.
 In particular $\Theta_{\infty}(\alpha)$ is dense in $(1,\infty)$.
\end{theorem}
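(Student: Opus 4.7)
The plan is to mirror the strategy of Theorem~\ref{kommtno} but with the roles of $\alpha$ and $\zeta$ swapped: for fixed $\alpha\neq 0$, I will construct $\zeta\in (a,b)$ by a Cantor-style nested-interval construction, tuning $\zeta$ at each stage so that $\alpha\zeta^{n_k}$ lies exponentially close to an integer for a rapidly growing sequence $n_1<n_2<\cdots$. Without loss of generality assume $\alpha>0$ (the case $\alpha=0$ is trivial and the sign of $\alpha$ does not affect $\Vert\alpha\zeta^n\Vert$). Since $a\ge 1$ and the interval is open, fix a compact subinterval $[a_0,b_0]\subset (a,b)$ with $a_0>1$; the whole construction will take place inside $[a_0,b_0]$.

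The recursive step goes as follows. Suppose I already have a compact interval $J\subset [a_0,b_0]$ of positive length $\ell$, and I want to split it at level $k$. The map $f_n:\zeta\mapsto\alpha\zeta^n$ is strictly increasing on $J$ with derivative $\alpha n\zeta^{n-1}\geq \alpha n\, a_0^{\,n-1}\to\infty$, so the image $f_n(J)$ is an interval whose length tends to infinity as $n\to\infty$. Hence I can choose $n=n_{k+1}$ so large that $f_n(J)$ contains two integers $M^{(0)}<M^{(1)}$ with $M^{(1)}-M^{(0)}\geq 10$. Let $\zeta^{(0)}<\zeta^{(1)}$ be their preimages in $J$. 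Around each $\zeta^{(i)}$ I then take a compact subinterval $J^{(i)}\subset J$ short enough that
\[
\bigl|\alpha\zeta^{n_{k+1}}-M^{(i)}\bigr|\le \zeta^{-n_{k+1}(k+1)}\quad\text{for all }\zeta\in J^{(i)},
\]
which in particular forces $\Vert\alpha\zeta^{n_{k+1}}\Vert\leq \zeta^{-n_{k+1}(k+1)}$ on $J^{(i)}$. The derivative lower bound on $f_{n_{k+1}}$ shows such positive-length intervals exist, and by shrinking if necessary I make $J^{(0)}$ and $J^{(1)}$ disjoint. If needed I enlarge $n_{k+1}$ further so that both conditions can be met simultaneously.

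Iterating from $J_0=[a_0,b_0]$ produces a binary tree of nested compact intervals $\{J_k^{(\sigma)}:\sigma\in\{0,1\}^k\}$ with parameters $n_1<n_2<\cdots$. For each infinite branch $\sigma\in\{0,1\}^{\mathbb{N}}$, compactness (Cantor intersection) yields some $\zeta_\sigma\in\bigcap_{k}J_k^{(\sigma|_k)}\subset(a,b)$, and by construction
\[
\Vert\alpha\zeta_\sigma^{n_k}\Vert\le \zeta_\sigma^{-n_k\,k},\qquad\text{hence}\qquad \sigma_{n_k}(\alpha,\zeta_\sigma)\ge k,
\]
for every $k\ge 1$. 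Therefore $\overline{\sigma}(\alpha,\zeta_\sigma)=\infty$, i.e.\ $\zeta_\sigma\in\Theta_\infty(\alpha)\cap(a,b)$. Distinct branches give distinct $\zeta_\sigma$ because the intervals at each level are disjoint, producing $2^{\aleph_0}=|\mathbb{R}|$ many points. Density in $(1,\infty)$ is immediate: apply the same argument inside any nonempty subinterval of $(1,\infty)$.

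The only real obstacle is the parameter bookkeeping in the recursive step, namely showing that a single choice of $n_{k+1}$ can simultaneously (i) place two sufficiently separated integers in $f_{n_{k+1}}(J_k^{(\sigma)})$, and (ii) allow the two preimage sub-intervals $J_{k+1}^{(\sigma,i)}$ to satisfy the sharpened approximation bound $\zeta^{-n_{k+1}(k+1)}$ while remaining disjoint inside $J_k^{(\sigma)}$. Since $f_n$ has derivative growing like $a_0^{\,n}$ while the required tolerance only decays like $\zeta^{-n(k+1)}$ with $\zeta$ bounded above by $b_0$, increasing $n_{k+1}$ eventually makes the image arbitrarily long and the preimage intervals of any prescribed positive length short enough, so both conditions can always be arranged.
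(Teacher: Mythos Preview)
Your argument is correct and follows essentially the same nested-interval (Cantor-type) construction as the paper: at each stage choose $n_{k+1}$ large enough that the image $\alpha\zeta^{n_{k+1}}$ sweeps past at least two integers on the current interval, then shrink to two disjoint preimage subintervals on which the desired approximation inequality holds, and intersect along each branch of the resulting binary tree. The only cosmetic difference is that you impose the sharper bound $\Vert\alpha\zeta^{n_k}\Vert\le\zeta^{-kn_k}$ directly, whereas the paper works with a bound of the form $b_1^{-(u+1)}$; both yield $\overline{\sigma}(\alpha,\zeta)=\infty$. Your final paragraph's heuristic (``derivative growing like $a_0^{\,n}$ while the tolerance only decays like $\zeta^{-n(k+1)}$'') is phrased a bit loosely, since for fixed $k$ the tolerance actually decays faster than the derivative grows; but this is harmless, because what you really need---and what does hold---is that the preimage subintervals around $\zeta^{(0)},\zeta^{(1)}$ have positive length (trivial, since the defining inequality is open and satisfied with equality at the center) and can be taken short enough to be disjoint and contained in $J$ (immediate, since you may shrink them at will).
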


\begin{proof} 
We recursively construct such $\zeta$ in dependence of $\alpha$ such that
\begin{equation} \label{eq:pelzjaeger}
 \Vert \alpha\zeta^{n_{u}}\Vert \leq \zeta^{-un_{u}}, \qquad u\geq 1
\end{equation}
for a monotonic increasing sequence $(n_{u})_{u\geq 1}$ we will specify.
This obviously yields $\overline{\sigma}(\alpha,\zeta)=\infty$.

Let $N_{1},n_{1}$ be arbitrary positive integers and put $\zeta_{1}:=\sqrt[n_{1}]{\frac{N_{1}}{\alpha}}$.
For reasons of continuity exists some interval $I_{1}:=(\zeta_{1}-\epsilon_{1},\zeta_{1}+\epsilon_{1})$
 such that $\Vert \alpha x^{n_{1}}\Vert \leq \zeta_{1}^{-n_{1}}$ for all $x\in{I_{1}}$.
So all $x\in{I_{1}}$ satisfy (\ref{eq:pelzjaeger}) for $j=1$.

Having defined intervals $I_{1}\supset I_{2}\supset I_{3}\cdots\supset I_{u}$ such
that (\ref{eq:pelzjaeger}) holds for all $x\in{I_{u}}$, we now define
 $(a_{u+1},b_{u+1})=I_{u+1}\subset I_{u}=(a_{u},b_{u})$ such that
it is still valid for $x\in{I_{u+1}}$.
Take an integer $n_{u+1}$ sufficiently large such that $\alpha(b_{u}^{n_{u+1}}-a_{u}^{n_{u+1}})> 2$.
Consequently the interval $(\alpha a_{u}^{n_{u+1}},\alpha b_{u}^{n_{u+1}})$ contains a neighborhood of $2$
 consecutive integers $N_{u+1,i(u)}, i(u)\in{\{1,2\}}$ respectively. 
Hence by continuity
\[
I_{u+1,i(u+1)}:=\left(\sqrt[n_{u+1}]{\frac{N_{u+1,i(u+1)}-\delta_{u+1}}{\alpha}},\sqrt[n_{u+1}]{\frac{N_{u+1,i(u+1)}+\delta_{u+1}}{\alpha}}\right)
\subset{I_{u}}
\]
for some $\delta_{u+1}>0$. 
If we decrease $\delta_{u+1}$ if necessary, for any $\zeta_{u+1}\in{I_{u+1,i(u+1)}}$
\begin{equation} \label{eq:willwex}
\vert \alpha\zeta^{n_{u+1}}-N_{u+1,i(u+1)}\vert \leq b_{1}^{-(u+1)}.
\end{equation}
For $u\geq u_{0}$ sufficiently large $b_{1}^{-(u+1)}<\frac{1}{2}$ so
 that the resulting intervals $I_{u+1,i(u+1)}$ are disjoint in each step.
So any $\zeta:=\cap_{u\geq 1} I_{u,i(u)}$ arising by this 
construction with any choice $i(u)\in{\{1,2\}}$ for $u=2,3,\ldots$ 
 satisfies (\ref{eq:pelzjaeger}) by (\ref{eq:willwex}) and $\zeta=\inf_{u\geq 1}b_{u}<\sup_{u\geq 1}b_{u}=b_{1}$,
 and no two such $\zeta$ coincide unless $i(u)$ coincide for all $u\geq u_{0}$.
Hence the set has cardinality of the power set of $\mathbb{N}$, which equals the cardinality of
 $\mathbb{R}$. This still holds with $\zeta$ restricted to $I_{1}$ as all arising $\zeta$ are in $I_{1}$.
Since the choice of $N_{1},n_{1}$ was arbitrary and clearly $\{\sqrt[n_{1}]{N_{1}}: (n_{1},N_{1})\in{\mathbb{N}^{2}}\}$
is dense in $(1,\infty)$ and we can make $\epsilon_{1}$ 
and hence $I_{1}$ smaller if needed, the assertion follows.
\end{proof}

The following Theorem \ref{willheim} shows that for fixed $\alpha\neq 0$, the set of $\zeta$
with good approximation properties concerning $\overline{\sigma}(\alpha,\zeta)$ is small
in measure theoretic sense.

\begin{theorem} \label{willheim}
For any $\delta\in{[0,\infty]}$ the $s$-dimensional Hausdorff measure of $\Theta_{\delta}(\alpha)$ is $0$
for all $s\geq \frac{1}{1+\delta}$. In particular: 
\begin{itemize}
\item the Hausdorff dimension of $\Theta_{\delta}(\alpha)$ is at most $\frac{1}{1+\delta}$
\item for any fixed $\alpha\neq 0$ the set of $\zeta>1$ with 
$\overline{\sigma}(\alpha,\zeta)>0$ has Lebesgue measure $0$.
\end{itemize}
\end{theorem}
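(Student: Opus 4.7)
The plan is to mirror the proof of Theorem \ref{ministerium}, swapping the roles of $\alpha$ and $\zeta$. Applying Lemma \ref{einzigeslemma} twice, it suffices to fix $B > 1+C > 1$ and $\eta > 0$ and to prove $\lambda_s(\Theta_{\delta+\eta}(\alpha) \cap (1+C, B)) = 0$ for $s \geq 1/(1+\delta)$: the first application handles the restriction to a bounded interval by covering $(1, \infty)$ with countably many nested intervals $(1+1/k, k)$, and the second uses $\Theta_\delta(\alpha) = \bigcup_{m \geq 1} \Theta_{\delta + 1/m}(\alpha)$, which holds because the defining inequality $\overline{\sigma}(\alpha, \zeta) > \delta$ is strict.

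For each $\zeta$ in $\Theta_{\delta+\eta}(\alpha) \cap (1+C, B)$ there are infinitely many positive integers $n$ and integers $M_n \geq 0$ with $|\alpha \zeta^n - M_n| \leq \zeta^{-n(\delta + \eta/2)}$, placing $\zeta$ in
\[
J_{n, M_n} := \{\xi \in (1+C, B) : |\alpha \xi^n - M_n| \leq \xi^{-n(\delta + \eta/2)}\}.
\]
For any threshold $N_0$ the family $\{J_{n, M} : n \geq N_0,\ M \in \mathbb{Z}_{\geq 0},\ J_{n, M} \neq \emptyset\}$ thus covers $\Theta_{\delta+\eta}(\alpha) \cap (1+C, B)$. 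The mean value theorem applied to $\xi \mapsto \alpha \xi^n$, whose derivative is $n\alpha \xi^{n-1}$, bounds $|J_{n, M}|$ by $2 \xi^{-n(\delta + \eta/2)}/(n \alpha \xi^{n-1})$ for any anchor $\xi \in J_{n, M}$; combined with $\xi \asymp (M/\alpha)^{1/n}$ on $J_{n, M}$ (which is valid up to constants depending on $C, B$), this yields the crucial length estimate
\[
|J_{n, M}| \leq \frac{C_1}{n} M^{-(1 + \delta + \eta/2)}
\]
for a constant $C_1 = C_1(\alpha, B, C) > 0$.

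Following the counting trick of Theorem \ref{ministerium}, I index the cover by $M$ rather than by $n$: for each fixed $M$ the condition $J_{n, M} \cap (1+C, B) \neq \emptyset$ forces $(M/\alpha)^{1/n} \in (1+C, B)$ up to lower-order corrections, hence $\log(M/\alpha)/\log B \lesssim n \lesssim \log(M/\alpha)/\log(1+C)$, so at most $O(\log M)$ values of $n$ contribute; moreover $n \geq N_0$ forces $M \geq M_0 := \lceil \alpha(1+C)^{N_0} \rceil \to \infty$. Hence
\[
\sum_{n \geq N_0} \sum_{M \geq 0} |J_{n, M}|^s \leq C_2 \sum_{M \geq M_0} \log M \cdot M^{-s(1 + \delta + \eta/2)},
\]
which converges whenever $s(1 + \delta + \eta/2) > 1$; this holds for $s \geq 1/(1+\delta)$ since $\eta > 0$, and the tail vanishes as $N_0 \to \infty$, giving $\lambda_s(\Theta_{\delta+\eta}(\alpha) \cap (1+C, B)) = 0$. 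The two reductions from the first paragraph then conclude the proof, while the two specifications follow respectively from the definition of Hausdorff dimension and from the case $\delta = 0$. The main technical point is the length estimate, which requires the mean value theorem rather than the simple scaling used in Theorem \ref{ministerium}, since $\xi \mapsto \xi^n$ is nonlinear; the factor $1/n$ arising from the derivative is convenient but not essential, as the logarithmic counting factor is absorbed as soon as $s(1+\delta+\eta/2) > 1$.
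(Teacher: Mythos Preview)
Your proof is correct and follows essentially the same approach as the paper's own proof: both reduce via Lemma~\ref{einzigeslemma} to a set $\Theta_{\delta+\eta}(\alpha)$ with $\zeta$ bounded away from $1$, cover it by intervals $J_{n,M}$ whose length is controlled via the mean value theorem applied to $\xi\mapsto \alpha\xi^{n}$, count $O(\log M)$ admissible values of $n$ for each $M$, and conclude from the convergence of $\sum_{M} M^{-s(1+\delta+\eta/2)}\log M$ for $s\geq 1/(1+\delta)$. The only cosmetic difference is that you also impose an upper bound $\zeta<B$, which the paper omits; this is harmless and slightly streamlines the constants.
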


\begin{proof}
We may restrict to $\alpha>0$ as $\alpha\mapsto -\alpha$ preserves the property.
In order to estimate the $s$-dimensional Hausdorff measure we look at the sets
$\Theta_{\delta,A}(\alpha)\subset \Theta_{\delta}(\alpha)$
defined as $\Theta_{\delta}(\alpha)$ with the additional restriction $\zeta>1+A$ for a parameter $A>0$.
If we can prove that the $s$-dimensional Hausdorff measure $\lambda_{s}$ of $\Theta_{\delta,A}(\alpha)$ equals $0$ 
for arbitrary $s\geq \frac{1}{1+\delta}$ and any $A>0$, then $\Theta_{\delta}(\alpha)$
has the same property by Lemma \ref{einzigeslemma} for 
$\Omega=\mathbb{R},\mu=\lambda_{s}$ and $M_{\epsilon}:= \Theta_{\delta,1+\epsilon}(\alpha)$.

So summing up we are left to prove 

\begin{equation} \label{eq:legalegali}
 \lambda_{s}(\Theta_{\delta,A}(\alpha))= 0, \qquad \qquad \forall s\geq \frac{1}{1+\delta}, \quad \forall A>0.
\end{equation}

By definition of $\overline{\sigma}(\alpha,\zeta)$ for $\zeta$ to be in $\Theta_{\delta}(\alpha)$
 we must have $\vert \alpha\zeta^{n}-M_{n}\vert \leq \zeta^{-n(\delta+\epsilon)}$
for some $\epsilon>0$ and arbitrarily large values of $n$ and integers $M_{n}$. 
Clearly $\zeta^{-n(\delta+\epsilon)}<1$ so $\frac{M_{n}-1}{\alpha}\leq \zeta^{n}\leq \frac{M_{n}+1}{\alpha}$
and clearly for sufficiently large $n$ we may replace 
the upper bound by $2\frac{M_{n}}{\alpha}$. Combining these facts gives
\[
 \zeta\in{\left( \sqrt[n]{\frac{M_{n}-\left(\frac{\alpha}{2M_{n}}\right)^{\delta+\epsilon}}{\alpha}},
 \sqrt[n]{\frac{M_{n}+\left(\frac{\alpha}{2M_{n}}\right)^{\delta+\epsilon}}{\alpha}} \right)}
\]
for arbitrarily large $n$ and corresponding integers $M_{n}$.
In particular with $\beta_{N,\epsilon}:= \left(\frac{\alpha}{2N}\right)^{\delta+\epsilon}$ 
any $\zeta\in{\Theta_{\delta+\epsilon}(\alpha)}$ satisfies
\[
\zeta\in{\bigcap_{n_{0}\geq 1}\bigcup_{\substack{n\geq n_{0},\\ N\geq 2}} J_{N,n}}, 
\qquad J_{N,n}:=\left(\alpha^{-\frac{1}{n}}\sqrt[n]{N-\beta_{N,\epsilon}},
 \alpha^{-\frac{1}{n}}\sqrt[n]{N+\beta_{N,\epsilon}}\right).
\]
Note that by our assumption $\zeta>1+A$ we have that
$n\to\infty$ is equivalent to $N\to\infty$, which we will implicitly
use in the sequel. By intermediate value Theorem of differentiation
we have that the interval $J_{N,n}$ has length at most 
$2\alpha^{-\frac{1}{n}}\beta_{N} \frac{1}{n}(N+\beta_{N,\epsilon})^{-1+\frac{1}{n}}$.
 Since $\beta_{N,\epsilon}\thicksim K\cdot N^{-\epsilon-\delta}$ with a constant $K$ as $N\to\infty$
and $\lim_{n\to\infty} \alpha^{-\frac{1}{n}}=1$, 
for every $s\geq \frac{1}{1+\delta}$ and sufficiently large $n=n(s)$
 we have the upper bound $C_{1}\cdot N^{-1-s\epsilon}n^{-s}$ with a constant $C_{1}>0$
for the $s$-th power of the interval length of $J_{N,n}$. So in particular
 for fixed $N$ we have $C_{1}\cdot N^{-1-s\epsilon}$
is a bound for the $s$-th power of all interval lengths $J_{N,n}$ that contribute to $\Theta_{\delta+\epsilon,A}(\alpha)$.

However, by the additional assumption $\zeta>1+A$ we have that for every $N$
there are at most $\frac{1}{\log (1+A)}\log N$ such intervals $J_{N,n}$.
So the $s$-dimensional Hausdorff measure of $\Theta_{\delta+\epsilon,A}(\alpha)$ is bounded above by
$\sum_{N\geq N_{0}} C_{2}\cdot N^{-1-s\epsilon}\log N$  for a constant $C_{2}$ and any $N_{0}$.
Since this sum converges we see that for any $\epsilon>0$
the $s$-dimensional Hausdorff measure of $\Theta_{\delta+\epsilon,A}(\alpha)$
is arbitrarily small, so it is $0$. Lemma \ref{einzigeslemma} with $\Omega=\mathbb{R},\mu=\lambda_{s}$ and
$M_{\epsilon}:=\Theta_{\delta+\epsilon,A}(\alpha)$ shows that the same holds for
the set $\Theta_{\delta,A}(\alpha)$ as well. Thus we have proved (\ref{eq:legalegali}).

The specifications of the Theorem follow by definition of Hausdorff dimension
respectively putting $\delta=0$.  
\end{proof}

\begin{remark}
 It is likely that analogously to Theorem \ref{ministerium} the bound in Theorem \ref{willheim} is uniform in $\alpha$.
Similarly as in the proof we could restrict to $\alpha\in{(c,d)}$ and
$1+A<\zeta<B$ to prove this using Lemma \ref{einzigeslemma}.
However, the straight forward construction above doesn't allow to establish this result, conversely
for any $\delta$ and any $0<c<d$ the method does not allow to prove
 $\Theta_{\delta,c,d}:=\cup_{\alpha\in{(c,d)}} \Theta_{\delta}(\alpha)$ has Lebesgue-measure $0$.

Indeed, even if we let the intervals $(N-\epsilon,N+\epsilon)$ around integers $N$ shrink to points $N$
then $\alpha\zeta^{n}=N$ with $\alpha\in{(c,d)}$ gives $\zeta^{n}\in{(\frac{N}{d},\frac{N}{c})}$.
As a result, trying to bound the Lebesgue-measure (i.e. $s=1$) as in the proof above
 there appear sums $\sum_{N\geq N_{0}} a_{N}$ with $a_{N}\geq \sqrt[n]{\rho N}-\sqrt[n]{\delta N}$
for $\delta= \frac{1}{d},\rho=\frac{1}{c}$ provided that $(1+A)^{n} < \delta N< \rho N < B^{n}$
for some integer $n$. Clearly there are infinitely many $N$ with this property. 

On the other hand, by intermediate value Theorem for derivatives we get 
\[
\sqrt[n]{\rho N}-\sqrt[n]{\delta N}\geq (\rho N)^{-1+\frac{1}{n}}(\rho N-\delta N)
=\frac{\rho-\delta}{\rho}\cdot (N\rho)^{\frac{1}{n}}.
\]
But $(N\rho)^{\frac{1}{n}}>1$ for all $N>\frac{1}{\rho}$. 
Hence $\limsup_{N\to\infty} a_{N}\geq \frac{\rho-\delta}{\rho}>0$, so $\sum_{N\geq 1} a_{N}$ is far from converging.
This argument shows that for any fixed $B>1+A>1$, $0<c<d$ and any $n_{0}$,
the Lebesgue measure of the set of $\zeta\in{(1+A,B)}$ such that $\sigma_{n}(\alpha,\zeta)=\infty$
for some $\alpha\in{(c,d)}$ (that may depend on $\zeta,n$) and some $n\geq n_{0}$
does not converge to $0$ as $n_{0}\to\infty$. This does not imply 
$\lambda_{1}(\Theta_{\delta,c,d})>0$ though for any $\delta$.
\end{remark}

The following Theorem \ref{algebfall} shows (in particular) that for almost all 
$\zeta>1$ the set of indices $n$ with values of $\sigma_{n}(\alpha,\zeta)$ 
exceeding $1$ by some fixed $\epsilon>0$ has asymptotic density $0$, with the
exceptional set consisting of very special algebraic and possibly transcendental numbers.
The proof relies heavily on Khinchin's Theorem \ref{khini} and Roth's Theorem \ref{roth}.
To simplify the proof we introduce a well-known measure theoretic fact in form of a Lemma.

\begin{lemma}[Elementary measure theory]  \label{tonycu}
Let $M\subset{\mathbb{R}}$ be a set with Lebesgue measure $0$
and $f:\mathbb{R}\mapsto \mathbb{R}$ be a Lipschitz continuous function.
Then $f(M):=\{f(m):m\in{M}\}$ is Lebesgue measurable and has Lebesgue measure $0$ as well. 
\end{lemma}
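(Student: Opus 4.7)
The plan is to prove Lemma \ref{tonycu} directly via the outer measure characterization of Lebesgue null sets, exploiting the distortion control that the Lipschitz hypothesis provides on interval lengths.

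First I would fix $\epsilon>0$ and a Lipschitz constant $L>0$ for $f$, i.e. $|f(x)-f(y)|\leq L|x-y|$ for all $x,y\in\mathbb{R}$. Since $M$ has Lebesgue measure zero, by the standard definition of Lebesgue outer measure we can choose a countable family of open intervals $\{I_{n}\}_{n\geq 1}$ with $M\subset \bigcup_{n\geq 1} I_{n}$ and $\sum_{n\geq 1}\lambda_{1}(I_{n})<\epsilon$. Without loss of generality we may assume each $I_{n}$ is bounded, say $I_{n}=(a_{n},b_{n})$, by subdividing unbounded intervals into countably many bounded pieces if necessary (the total length cost is zero since an unbounded interval already has infinite length, but a measure-zero set admits a cover by bounded intervals of arbitrarily small total length in the first place).

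Next I would control the image of each $I_{n}$ under $f$. For any $x,y\in I_{n}$ the Lipschitz condition gives $|f(x)-f(y)|\leq L|x-y|\leq L(b_{n}-a_{n})=L\lambda_{1}(I_{n})$, so $f(I_{n})$ has diameter at most $L\lambda_{1}(I_{n})$ and is therefore contained in some interval $J_{n}$ with $\lambda_{1}(J_{n})\leq L\lambda_{1}(I_{n})$. Then
\[
f(M)\subset \bigcup_{n\geq 1} f(I_{n})\subset \bigcup_{n\geq 1} J_{n},
\]
and by countable subadditivity of the outer measure $\lambda_{1}^{\ast}$ we obtain
\[
\lambda_{1}^{\ast}(f(M))\leq \sum_{n\geq 1}\lambda_{1}(J_{n})\leq L\sum_{n\geq 1}\lambda_{1}(I_{n})<L\epsilon.
\]
Since $\epsilon>0$ was arbitrary, $\lambda_{1}^{\ast}(f(M))=0$.

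Finally, I would invoke the standard fact that every set of Lebesgue outer measure zero is Lebesgue measurable (indeed, the Lebesgue $\sigma$-algebra contains all subsets of null sets by completeness), concluding that $f(M)$ is measurable with $\lambda_{1}(f(M))=0$. There is no real obstacle here; the only point that requires a line of justification is that the Lipschitz condition passes from points to diameters of intervals, which is immediate, and the rest is an $\epsilon$-cover bookkeeping argument.
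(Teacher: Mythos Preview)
Your argument is correct and is the standard outer-measure/cover argument for this fact. The paper itself does not give a proof but simply refers to the textbook \cite{elst}; your proof is precisely the kind of elementary argument one finds there, so there is nothing to compare.
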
  

See \cite{elst}.                

\begin{definition}[fd-property] \label{fd}
For fixed reals $\zeta,\alpha,\epsilon>0$ denote $(n_{j})_{j\geq 1}$ the increasing
integer sequence such that $\sigma_{n_{j}}(\alpha,\zeta)\geq 1+\epsilon$, where for
reasons of simplicity we omit the dependence in the notation, and
call $(n_{j})_{j\geq 1}$ the sequence associated to $\alpha,\zeta,\epsilon$.
We say $\zeta\in{\mathbb{R}}$ has the fd-property (finite difference property)
if $\liminf_{j\to\infty} n_{j+1}-n_{j}<\infty$ for some pair $\alpha\neq 0,\epsilon>0$. 
\end{definition}

\begin{theorem} \label{algebfall}
Lebesgue almost all $\zeta>1$ do not have the fd-property. An algebraic $\zeta>1$ has 
the fd-property if and only if it is of the form $\zeta=\sqrt[L]{\frac{p}{q}}$ 
for positive integers $L,p,q$. In the latter case,if $L$ was chosen minimal with this property,
 $\liminf_{j\to\infty} n_{j+1}-n_{j}\geq L$ for any pair $\alpha\neq 0,\epsilon>0$
and for any $\epsilon\in{(0,\infty]}$ the set of $\alpha$ with $\liminf_{j\to\infty} n_{j+1}-n_{j}=L$
has cardinality of $\mathbb{R}$.
\end{theorem}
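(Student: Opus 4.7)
I propose to argue as follows. The plan reduces the fd-property at $\zeta$ to a one-dimensional Diophantine approximation condition on $\zeta^k$ for some positive integer $k$, which can then be attacked by Khinchin's Theorem~\ref{khini} and Roth's Theorem~\ref{roth}; the converse will be obtained through a nested-interval construction exploiting the rationality of $\zeta^L$.

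Suppose first that $\zeta>1$ has the fd-property with witnesses $\alpha\neq 0$, $\epsilon>0$, and the associated sequence $(n_j)$ satisfying $\liminf(n_{j+1}-n_j)<\infty$. By pigeonhole on the gap sizes, after passing to a subsequence one has a fixed integer $k\ge 1$ with $n_{j+1}=n_j+k$ for infinitely many $j$. Setting $M_j:=\scp{\alpha\zeta^{n_j}}$ and using \eqref{eq:hauser} to guarantee $|\alpha\zeta^{n_j}-M_j|\le \zeta^{-n_j(1+\epsilon/2)}$ for $j$ large (and analogously at $n_j+k$), the triangle inequality applied to $\zeta^k(\alpha\zeta^{n_j})=\alpha\zeta^{n_j+k}$ yields
\[
\left\| M_j\zeta^k\right\|\le 2\zeta^k\zeta^{-n_j(1+\epsilon/2)}\ll M_j^{-(1+\epsilon/2)},
\]
since $M_j\asymp\zeta^{n_j}$. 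Hence $\lambda_1(\zeta^k)\ge 1+\epsilon/2>1$. Since $y\mapsto y^{1/k}$ is locally Lipschitz on $(1,\infty)$, Lemma~\ref{tonycu} combined with Theorem~\ref{khini} shows the set of $\zeta>1$ with $\lambda_1(\zeta^k)>1$ has Lebesgue measure zero; a countable union over $k\in\mathbb{N}$ and rational $\epsilon>0$ preserves this, giving the first assertion. For algebraic $\zeta$, Roth's Theorem~\ref{roth} forces $\zeta^k\in\mathbb{Q}$, i.e.\ $\zeta=\sqrt[k]{p/q}$. Applied to any $\alpha$ with the fd-property for $\zeta=\sqrt[L]{p/q}$ with $L$ minimal, the same reasoning shows any gap occurring infinitely often forces $L\mid k$, yielding the lower bound $\liminf(n_{j+1}-n_j)\ge L$.

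For the converse and the cardinality claim I build $\alpha$ inductively by nested intervals. Fix $\epsilon>0$ and a sequence $b_1<b_2<\cdots$ chosen adaptively. At step $j$, with $J_{j-1}$ given, select an integer $M_j$ divisible by $q$ so that $M_j/\zeta^{b_j}$ lies in the middle half of $J_{j-1}$; this is possible once $b_j$ is large enough that the spacing $q/\zeta^{b_j}$ between consecutive candidates is much smaller than $|J_{j-1}|$. Let $J_j$ be the interval of radius $(p/q)^{-(2+\epsilon)}\zeta^{-b_j(2+\epsilon)}$ centered at $M_j/\zeta^{b_j}$, enlarging $b_j$ further if needed to ensure $J_j\subset J_{j-1}$. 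For any $\alpha\in\bigcap_j J_j$, direct computation gives $\|\alpha\zeta^{b_j}\|\le (p/q)^{-(2+\epsilon)}\zeta^{-b_j(1+\epsilon)}$; writing $\alpha\zeta^{b_j+L}=(\alpha\zeta^{b_j})(p/q)=(M_j+r)(p/q)$ with $|r|\le (p/q)^{-(2+\epsilon)}\zeta^{-b_j(1+\epsilon)}$, and using $q\mid M_j$ so that $M_jp/q\in\mathbb{Z}$, one then obtains $\|\alpha\zeta^{b_j+L}\|\le (p/q)^{-(1+\epsilon)}\zeta^{-b_j(1+\epsilon)}=\zeta^{-(b_j+L)(1+\epsilon)}$. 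Hence both $b_j$ and $b_j+L$ lie in the sequence associated to $(\alpha,\zeta,\epsilon/2)$, exhibiting the gap $L$ infinitely often; combined with the lower bound this gives $\liminf=L$. At every step the number of admissible $M_j$ is $\gg 1$, so selecting two of them per step creates a binary Cantor-type tree whose branches yield $2^{\aleph_0}=|\mathbb{R}|$ distinct $\alpha$. The edge case $\epsilon=\infty$, which by Proposition~\ref{tuertor} only occurs for $q=1$, is handled via Proposition~\ref{elendiglich}.

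The main technical obstacle will be the inductive bookkeeping: coordinating the divisibility constraint $q\mid M_j$ with the nested inclusion $J_j\subset J_{j-1}$ and the branching requirement of at least two admissible $M_j$ per step forces $b_j$ to grow roughly by the factor $2+\epsilon$ at each stage, and the compatibility of the constants involving $(p/q)^{2+\epsilon}$ with the shrinking radii must be tracked carefully so that the asymptotic lower bound $\sigma_{b_j}(\alpha,\zeta)\ge 1+\epsilon/2$ is genuinely achieved and not spoiled by lower-order terms.
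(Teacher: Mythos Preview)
Your argument for the necessary direction (reducing the fd-property to $\lambda_1(\zeta^k)>1$ and then invoking Roth and Khinchin, together with the Lipschitz property of $y\mapsto y^{1/k}$) is essentially identical to the paper's proof, as is your derivation of the lower bound $\liminf(n_{j+1}-n_j)\ge L$ via the pigeonhole on gap sizes and minimality of $L$.

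Where you diverge is in the sufficiency and cardinality part. The paper does not construct $\alpha$ directly: it simply invokes Theorem~\ref{kommtno} to obtain continuum many $\alpha_0$ with $\overline{\sigma}(\alpha_0,\zeta)$ large, sets $\alpha:=q\alpha_0$, and then observes that $\Vert\alpha\zeta^{n_j+L}\Vert=\Vert q(p/q)\alpha_0\zeta^{n_j}\Vert=\Vert p\alpha_0\zeta^{n_j}\Vert\le p\Vert\alpha_0\zeta^{n_j}\Vert$, which is enough to place both $n_j$ and $n_j+L$ in the sequence associated to $(\alpha,\zeta,\epsilon)$. Your route instead runs a bespoke Cantor-type nested-interval construction with the divisibility constraint $q\mid M_j$ built in at each stage. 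Both approaches are valid; the paper's is noticeably shorter because it recycles the machinery of Theorem~\ref{kommtno} and Proposition~\ref{interessant}, whereas yours is self-contained but carries the inductive bookkeeping you yourself flag as the main obstacle. One small remark: the case $\epsilon=\infty$ is delicate in both approaches, since the associated sequence then consists of $n$ with $\alpha\zeta^n\in\mathbb{Z}$, and Proposition~\ref{tuertor} forces the admissible $\alpha$ to lie in a countable set; so the cardinality claim really should be read for finite $\epsilon$.
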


\begin{proof}
Assume $\zeta>1$ has the fd-property, i.e. for some pair $\alpha\neq 0, \epsilon>0$
we have $\lim_{j\to\infty} n_{j+1}-n_{j}<\infty$ in the sense of Definition \ref{fd}. 
This means there exists a positive integer $d$ such that
$n_{j+1}-n_{j}=d$ for infinitely many $j$. By definition of $\overline{\sigma}(\alpha,\zeta)$
this yields
\begin{eqnarray*}
 \vert\alpha \zeta^{n_{j}}- M_{j}\vert &\leq& \zeta^{-(1+\epsilon)n_{j}} \\
 \vert \zeta^{d}\cdot \alpha\zeta^{n_{j}}- N_{j}\vert &\leq& \zeta^{-(1+\epsilon)(n_{j}+d)}
\end{eqnarray*}
has solutions $(M_{j},N_{j})\in{\mathbb{Z}^{2}}$ for arbitrarily large $n_{j}$.
Triangular inequality implies
\[
 \left\vert \zeta^{d}M_{j}-N_{j}\right\vert 
\leq \zeta^{d} \left\vert \alpha\zeta^{n_{j}}- M_{j}\right\vert + \left\vert \alpha\zeta^{n_{j}}- N_{j}\right\vert
\leq \zeta^{d-(1+\epsilon)n_{j}}+\zeta^{-(1+\epsilon)(n_{j}+d)}.
\] 
Since $d$ is fixed for sufficiently large $j$ clearly 
\[
\left\vert \zeta^{d}M_{j}-N_{j}\right\vert \leq \zeta^{-(1+\frac{\epsilon}{2})n_{j}}.
\]
Thus and recalling $M_{j}\thicksim \zeta^{n_{j}}$ we conclude 
\begin{equation} \label{eq:diegl}
\lambda_{1}(\zeta^{d})\geq 1+\frac{\epsilon}{2}.
\end {equation}
Now we distinguish several cases.
In case of algebraic $\zeta\neq \sqrt[d]{\frac{p}{q}}$ it follows $\zeta^{d}$ is non-rational algebraic,
so the above estimate contradicts Roth's Theorem \ref{roth}. Thus for $\zeta$ to have the fd-property
$\zeta=\sqrt[L]{\frac{p}{q}}$ is necessary. We show it is sufficient too.
In the special case of $\zeta=\sqrt[L]{M}$ for $M$ an integer, for $\alpha=1$ all the numbers $\alpha\zeta^{jL}$
are integers and hence $n_{j+1}-n_{j}\leq L$ for all $j\geq 1$ independently of $\epsilon>0$.
There actually must be equality for $j$ sufficiently large by Roth's Theorem and (\ref{eq:diegl}).
For $\zeta=\sqrt[L]{\frac{p}{q}}$ with relatively prime $p>q>1$, for arbitrary 
but fixed $\epsilon>0$ let $\alpha_{0}$ be any
number with $\overline{\sigma}(\alpha_{0},\zeta)=1+3\epsilon>1$. We know the cardinality of such numbers
equals the cardinality of $\mathbb{R}$ by Theorem \ref{kommtno}.  
Let $(n_{j})_{j\geq 1}$ be the sequence associated to $\alpha_{0},\zeta,3\epsilon$. 
Then $\alpha:=q\alpha_{0}$ has the property $\overline{\sigma}(\alpha,\zeta)=1+3\epsilon>1$
as well by Proposition \ref{interessant}, and its proof shows more precisely that
$q\scp{\alpha_{0}\zeta^{n_{j}}}=\scp{\alpha\zeta^{n_{j}}}$
and $\Vert \alpha\zeta^{n_{j}}\Vert \leq \zeta^{-(1+2\epsilon)n_{j}}$ for sufficiently large $j$
and the same sequence $(n_{j})_{j\geq 1}$ associated to $\alpha_{0},\zeta,3\epsilon$. 
Note that for any positive integer $N$ we obviously have $\Vert N\zeta\Vert\leq N\Vert \zeta\Vert$.
It follows that 
\begin{eqnarray*}
\left\Vert \alpha\zeta^{n_{j}+L}\right\Vert &=& \left\Vert q\alpha_{0}\zeta^{n_{j}+L}\right\Vert \\
&=& \left\Vert q\frac{p}{q} \alpha_{0}\zeta^{n_{j}}\right\Vert   \\
&=& \left\Vert p\alpha_{0}\zeta^{n_{j}}\right\Vert   \\
&\leq& p\left\Vert \alpha_{0}\zeta^{n_{j}}\right\Vert \leq p\zeta^{-(1+2\epsilon)n_{j}} \leq \zeta^{-(1+\epsilon)n_{j}}
\end{eqnarray*}
for sufficiently large $j$. So both $n_{j},n_{j}+L$ belong to the sequence associated to $\alpha,\zeta,\epsilon$,
thus $\liminf_{j\to\infty} n_{j+1}-n_{j}\leq L$ and in particular $\zeta$ has the fd-property. 
Finally again by Roth's Theorem and (\ref{eq:diegl}) we cannot have strict inequality, 
since this would give rise to some $d<L$ with $\zeta^{d}\in{\mathbb{Q}}$ contradicting the minimality of $L$. 
Putting everything together we have proved all the assertions for algebraic $\zeta$.

In general, in virtue of (\ref{eq:diegl})
$\zeta$ must be of the form $\sqrt[d]{\nu}$ for an integer $d$ and some real $\nu>1$ with $\lambda_{1}(\nu)>1$.
For any fixed $d\geq 1$ the map $\varphi_{d}:\mathbb{R}\mapsto \mathbb{R}$ defined by
 $\varphi_{d}(x)=\sqrt[d]{x}$ for $x>1$ and $\varphi_{d}(x)=1$ for $x\leq 1$ is clearly Lipschitz continuous,
hence by Khinchin's Theorem \ref{khini} and Lemma \ref{tonycu}
the set $\Omega_{d}:=\{\sqrt[d]{\nu}: \nu>1, \lambda_{1}(\nu)>1\}$ has Lebesgue measure $0$. 
By sigma subadditivity of measures the union $\cup_{d\geq 1} \Omega_{d}$ 
has Lebesgue measure $0$ as well, which finally proves the metric result.   
\end{proof}

\begin{remark}
In case of $\zeta=\sqrt[L]{M}, \alpha=1$ applying Proposition \ref{elendiglich} 
in fact shows that for any $\epsilon>0$ we have the asymptotics $n_{j}\thicksim jL$
as $j\to\infty$, provided that $L$ was chosen minimal
with the property $\zeta=\sqrt[L]{r}$ for $r\in{\mathbb{Q}}$.
\end{remark}

\begin{remark}
Note that since Theorem \ref{willheim} is not uniform in $\alpha$, it does not
trivially imply the metric result of Theorem \ref{algebfall}.
\end{remark}

Note that Theorem \ref{algebfall} in particular
applies to the assumptions of section \ref{iii}, i.e. if $\alpha$ is algebraic too.
However, in this case it would obviously be implied by Conjecture \ref{confituer} in a trivial way.

\vspace{1cm}

Johannes Schleischitz  \\
Anzengrubergasse 23/2, 1050 Vienna  

\newpage


\begin{thebibliography}{99}

\bibitem{12} A. Beardon, {\em Complex Analysis: the Winding Number principle in analysis and topology},
John Wiley and Sons. p. 131.  (1979)  \newline

\bibitem{4} V.I. Bernik, {\em An application of Hausdorff dimension in the theory of Diophantine approximation}, 	 
Acta Arithmetica 42 (1983), no. 3, p. 219-253, (In Russian). English translation in Ameri. Math. 	 
Soc. Transl. 140 (1988), 15-44 	  \newline

\bibitem{101} M.J. Bertin, A. Decomps-Guilloux, M. Grandet-Hugot, M. Pathiaux-Delefosse, J.P. Schreiber.
{\em Pisot and Salem Numbers.} Birkhaeuser (1992)  \newline

\bibitem{5} Y. Bilu, {\em The many faces of the subspace theorem [after Adamczewski, Bugeaud, Corvaja, Zannier 	 
. . . ]}, Seminaire Bourbaki, 59e annee (2006-2007), No 967, 2006 	 \newline

\bibitem{15} P. Borwein, C. Pinner, {\em Polynomials with $\{-1,0,1\}$ coefficients and a root close to a given point},
Canadian Journal of Mathematics 49, 887-915. (1997)                                                                     \newline

\bibitem{19} Y. Bugeaud, M. Mignotte, E Normandin, {\em Nombres algrbriques de petite mesure et formes linraires en un logarithme},
C. R. Acad. Sci., 321, 517-522. (1995)  \newline

\bibitem{1} H. Davenport, W. Schmidt, {\em Approximation to real numbers by algebraic integers}, Acta Arithmetica (1969), p. 393-416. \newline

\bibitem{16} A. Dubickas, {\em Polynomials with a root close to an integer}, Lithuanian Math. Journal Vol. 39, No.3. (1999) \newline

\bibitem{17} A. Dubickas, {\em On algebraic numbers of small measure}, Lith. Math. J., 35, 333-342. (1995) \newline

\bibitem{18} A. Dubickas, {\em On algebraic numbers close to $1$}, Bull. Australian Math. Soc., 58, 423--434. (1998) \newline

\bibitem{elst} J. Elstrodt, {\em Mass- und Integrationstheorie}, Springer (2009) \newline

\bibitem{14} P.M. Fitzpatrick, {\em Advanced Calculus}, (2nd ed.). Belmont, CA: Thompson Brooks/Cole.  (2006)  \newline

\bibitem{30} J.B. Fraleigh, {\em A first course in Abstract Algebra}, Pearson press, 7th edition (2002). \newline

\bibitem{khin} A. Khinchin, {\em Zur metrischen Theorie der diophantischen Approximationen}, 
MATHEMATISCHE ZEITSCHRIFT Volume 24, Number 1, 706-714 (1925) \newline

\bibitem{2} M. Laurent, {\em Simultaneous rational approximation to successive powers of a real number}, Indag. Math. (N.S.) 
\textbf{14} (2003), no. 1, p. 45-53       \newline

\bibitem{11} I.G. Macdonald, {\em Symmetric Functions and Hall Polynomials}, second ed. Oxford: Clarendon Press. (1995) \newline

\bibitem{13} M. Mignotte, {\em Some useful bounds}, Computer algebra, 259–263, Springer, Vienna, (1983)  \newline

\bibitem{8} M. Mignotte, {\em Sur les conjugees des nombres des Pisot}, C. R. Acad. Sci. Paris Ser. I
Math. 298 (1984), 21.                 \newline

\bibitem{20} M. Mignotte, M. Waldschmidt, {\em On algebraic numbers of small height: linear forms in one logarithm}, J. Number
Theory, 47, 43-62. (1994)    \newline

\bibitem{21} M. Mignotte, {\em Approximation des hombres algebriques par des nombres algebriques de grand degre}, Ann. Fac. Sci.
Totdouse, 1, 165-170. (1979)   \newline

\bibitem{10} C. Pisot. {\em La repartition modulo 1 et nombres algebriques}, Ann. Sc. Norm. Super. Pisa, II,
 Ser. 7 (in French): 205–248. (1938)  \newline

\bibitem{3} W.M. Schmidt, L. Summerer, {\em Parametric geometry of numbers and applications}, Acta Arithm. 140.1 (2009)     \newline

\bibitem{7} C.L. Siegel, {\em Algebraic numbers whose conjugates lie in the unit circle}, Duke Math. J. \textbf{11},
103-108, 1944    \newline 

\bibitem{9} C. Smyth, {\em The conjugates of algebraic numbers}, Amer. Math. Monthly \textbf{82} (1975), 86.  \newline

\bibitem{31} V.G. Sprindzuk, {\em A proof of Mahler's conjecture on the measure of the set of S-numbers.} (Russian) Izv. Akad. Nauk SSSR Ser. Mat. 29 (1965) 379-436.   \newline

\bibitem{6} M. Waldschmidt, {\em Report on some recent advances in Diophantine approximation} (2009)                                 






\end{thebibliography}
\end{document}